\numberwithin{equation}{section}
\renewcommand\vec{\bm}
\newcommand{\n}[1]{\|{#1}\|}
\newtheorem{theorem}{Theorem}[section]
\newtheorem{lemma}[theorem]{Lemma}
\newtheorem{Proposition}[theorem]{Proposition}
\newtheorem{Conjecture}[theorem]{Conjecture}
\newtheorem{Corollary}[theorem]{Corollary}
\title[Additive energies on spheres]{Additive energies on spheres}
\author[Akshat Mudgal]{Akshat Mudgal}
\subjclass[2010]{11B30, 11L07, 11D45, 42B05 } 
\keywords{Incidence theory, Discrete restriction estimates, Higher energy method}
\date{} 
\address{Mathematical Institute, University of Oxford, Radcliffe Observatory Quarter, Woodstock Road, Oxford OX2 6GG, UK}
\email{Akshat.Mudgal@maths.ox.ac.uk}
\renewcommand\vec{\bm}
\begin{document}

\maketitle

\begin{abstract}
In this paper, we study additive properties of finite sets of lattice points on spheres in $3$ and $4$ dimensions. Thus, given $d,m \in \mathbb{N}$, let $A$ be a set of lattice points $(x_1, \dots, x_d) \in \mathbb{Z}^d$ satisfying $x_1^2 + \dots + x_{d}^2 = m$. When $d=4$, we prove threshold breaking bounds for the additive energy of $A$, that is, we show that there are at most $O_{\epsilon}(m^{\epsilon}|A|^{2 + 1/3 - 1/2766})$ solutions to the equation $a_1 + a_2 = a_3 + a_4,$ with $a_1, \dots, a_4 \in A$. This improves upon a result of Bourgain and Demeter, and makes progress towards one of their conjectures. A further novelty of our method is that we are able to distinguish between the case of the sphere and the paraboloid in $\mathbb{Z}^4$, since the threshold bound is sharp in the latter case. We also obtain variants of this estimate when $d=3$, where we improve upon previous results of Benatar and Maffucci concerning lattice point correlations. Finally, we use our bounds on additive energies to deliver discrete restriction type estimates for the sphere.
\end{abstract}

\section{Introduction}

A classically studied object in additive number theory, harmonic analysis and incidence geometry is the $(d-1)$-sphere, that is, the unit sphere in $\mathbb{R}^d$. Workers in the former areas are often interested in finding additive properties of points on the $(d-1)$-sphere, including topics like the behaviour of solutions to a given additive equation where all the variables lie in some finite subset of the $(d-1)$-sphere. Moreover, these relate, in a natural way, to moments of exponential sums supported on finite subsets of the sphere, thus highlighting the connections of this topic to restriction theory on curved surfaces. On the other hand, incidence geometry involves studying bounds on the number of incidences between an arbitrary finite set of points and an arbitrary set of varieties, which in this particular case, would entail a finite collection of spheres. In recent times, this connection has been frequently exploited by applying results of an incidence geometric flavour to improve estimates that are central to number theory and harmonic analysis (for instance, see \cite{BM2019}, \cite{BB2015}, \cite{BD2015a}).
\par

In this paper, we bring techniques from arithmetic combinatorics into this blend, consequently strengthening various known results on additive properties associated with subsets of $(d-1)$-spheres, when $d \in \{3,4\}$. Hence, given a natural number $d$ and real number $\lambda>0$, we define
\[ \mathcal{S}_{d, \lambda} = \{ \vec{x} \in \mathbb{R}^d \ | \ x_1^2 + \dots + x_d^2 = \lambda\}. \]
Moreover, for natural numbers $m$, we use $S_{d,m}$ to denote the lattice points on the sphere $\mathcal{S}_{d,m}$, that is, $S_{d,m} = \mathcal{S}_{d,m} \cap \mathbb{Z}^d$. 
Since we are interested in studying additive equations over the sphere, we define for every $s \in \mathbb{N}$ and every finite, non-empty subset $A$ of $\mathcal{S}_{d,\lambda}$, the additive energy $E_{s,2}(A)$, which counts the number of solutions to the equation
\begin{equation} \label{abad}
 \vec{x}_1 + \dots + \vec{x}_{s} = \vec{x}_{s+1} + \dots + \vec{x}_{2s}, 
\end{equation}
such that $\vec{x}_1, \dots, \vec{x}_{2s} \in A$. This quantity $E_{s,2}(A)$ has been studied in various works (see \cite{De2014}, \cite{BD2015}), and estimates involving this have close connections to problems of a harmonic analytic flavour such as discrete restriction estimates for spheres.
\par

We begin by studying bounds for $E_{2,2}(A)$ for subsets $A$ of $S_{4,m}$. The best known result in this direction was given by Bourgain and Demeter \cite{BD2015a} who showed that
\begin{equation} \label{bdbnd}
 E_{2,2}(A) \ll_{\epsilon} m^{\epsilon} |A|^{2 + 1/3}. 
 \end{equation}
In particular, they used ideas from incidence geometry to obtain the above bound, and further noted that these techniques worked in a similar manner for lattice points on paraboloids, consequently delivering the same bound as $\eqref{bdbnd}$ even in the latter scenario. But in the case of paraboloids, this bound is sharp, and so, they remarked that any further progress in the spherical case should require some involved number theory to detect the non-uniform distribution of lattice points on the sphere. Furthermore, they stated that, in the spherical setting, a much stronger bound than $\eqref{bdbnd}$ should hold true. In fact, this conjectured estimate is equivalent to the four dimensional case of a well known problem in harmonic analysis known as the discrete restriction conjecture for lattice points on the sphere.

\begin{Conjecture}\label{bdconsp}
Let $A$ be a non-empty subset of $S_{4,m}$. Then, we have
\[  E_{2,2}(A) \ll_{\epsilon} m^{\epsilon} |A|^{2}. \]
 \end{Conjecture}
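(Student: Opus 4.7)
The starting point is the expansion
$$E_{2,2}(A) = \sum_{v \in \mathbb{Z}^4} r(v)^2, \qquad r(v) := |\{(a_1,a_2) \in A^2 : a_1 + a_2 = v\}|,$$
combined with the elementary identity $a_1 \cdot a_2 = |v|^2/2 - m$ forced by $|a_1|^2 = |a_2|^2 = m$ and $a_1 + a_2 = v$. Consequently $a_1$ lies on the intersection $\Sigma_v$ of $\mathcal{S}_{4,m}$ with the affine hyperplane $\{x : x \cdot v = |v|^2/2\}$, which is a $2$-sphere of radius $\sqrt{m - |v|^2/4}$ inside a rational $3$-dimensional affine subspace. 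In particular $r(v) \leq |A \cap \Sigma_v|$, and the count of lattice points on $\Sigma_v$ is governed by representations of $m - |v|^2/4$ by the positive definite ternary form attached to the sublattice $v^\perp \cap \mathbb{Z}^4$.

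The plan is to exploit this geometric picture via a dyadic pigeonhole on the multiplicity of $v$. Writing
$$E_{2,2}(A) \ll \log m \cdot \max_{K \geq 1} K^2 \cdot |\{v \in \mathbb{Z}^4 : r(v) \asymp K\}|,$$
and fixing such a scale $K$, every $v$ in the level set records at least $K$ points of $A$ on $\Sigma_v$. I would then combine (i) Iwaniec- and Duke-type bounds for representations by positive definite ternary quadratic forms, giving good pointwise control on $|\Sigma_v \cap \mathbb{Z}^4|$ for most $v$, with (ii) a Bessel-type inequality on the bipartite incidence structure between $A$ and the family of $2$-spheres $\{\Sigma_v\}$, to conclude that the high-multiplicity level set has cardinality at most $m^\epsilon |A|^2 / K^2$. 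Summing dyadically over $K$ would then yield the conjectured bound.

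The principal obstacle, and the reason the conjecture remains open, is the gap between the deterministic bound $r(v) \ll_\epsilon m^{1/2 + \epsilon}$ coming from lattice point counts on individual $2$-spheres and the $r(v) \ll m^\epsilon$ one would need on average. No argument that treats a single $\Sigma_v$ in isolation can close this gap; one must exploit the \emph{simultaneous} arithmetic rigidity of pairs $a_1, a_2 \in \mathbb{Z}^4$ both lying on $\mathcal{S}_{4,m}$, rather than on one translate at a time. This is exactly the obstruction Bourgain and Demeter identified: any incidence-geometric approach that applies equally to the sphere and the paraboloid cannot break the $|A|^{7/3}$ threshold, so progress towards the full conjecture must pass through modular-form or automorphic input special to the sphere, and I expect this to be the decisive difficulty.
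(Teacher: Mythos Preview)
The statement labelled \texttt{bdconsp} is a \emph{conjecture}, not a theorem: the paper does not prove it, and indeed the whole point of Theorem~\ref{btdec} is that it makes partial progress (a power saving over $|A|^{7/3}$) towards this open problem. So there is no ``paper's own proof'' to compare your proposal against.

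Your write-up is not a proof either, and you are evidently aware of this: you describe a plan, then correctly identify the obstruction (the gap between the pointwise bound $r(v)\ll_\epsilon m^{1/2+\epsilon}$ on a single slice $\Sigma_v$ and the average bound $r(v)\ll m^\epsilon$ one would need) and note that any method indifferent to the sphere/paraboloid distinction cannot close it. That diagnosis is consistent with the paper's own discussion in \S2. What you have written is a reasonable heuristic commentary on why the conjecture is hard, but step (ii) of your plan --- the ``Bessel-type inequality'' yielding a level-set bound of $m^\epsilon |A|^2/K^2$ --- is precisely the missing ingredient, not an available tool; if one had that inequality the conjecture would follow immediately, and no such inequality is known. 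So as a proof the proposal has a genuine gap exactly where you say it does.
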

 In particular, this would imply that for every $s \geq 2$ and for every non-empty subset $A$ of $S_{4,m}$, we have $E_{s,2}(A) \ll_{\epsilon} m^{\epsilon} |A|^{2s-2}.$ We now state our main result which provides threshold breaking upper bounds for $E_{s,2}(A)$ in this setting. 

\begin{theorem} \label{btdec}
Let $s,m$ be natural numbers such that $s \geq 2$, let $A$ be a finite, non-empty subset of $S_{4,m}$ and let $c = 1/461$. Then
\[ E_{s,2}(A) \ll_{s,\epsilon} m^{\epsilon} |A|^{2s- 2 + 1/6 + (1- c)\cdot 6^{-s+1}}. \]
\end{theorem}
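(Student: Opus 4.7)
The plan is to split the argument into the base case $s=2$, where sphere-specific arithmetic must be used to improve Bourgain--Demeter, and an inductive step for $s \geq 3$ that iterates a Hölder-type recursion whose contraction rate of $1/6$ produces the precise exponent $(1-c)\cdot 6^{-s+1}$.

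For the base case I would establish
\[ E_{2,2}(A) \ll_{\epsilon} m^{\epsilon} |A|^{2 + (2-c)/6} \]
with $c = 1/232$, i.e.\ the Bourgain--Demeter exponent $1/3$ shifted down by $1/1392$. The Bourgain--Demeter argument uses an incidence count of Szemerédi--Trotter type that is insensitive to the sphere vs.\ paraboloid distinction. To improve it on the sphere I would exploit the parallelogram identity: for $\vec{a}_1, \vec{a}_2 \in A \subseteq S_{4,m}$ with $\vec{a}_1 + \vec{a}_2 = \vec{y}$, the difference $\vec{v} = \vec{a}_1 - \vec{a}_2$ lies on the three-dimensional sphere of squared radius $4m - |\vec{y}|^2$ in $\mathbb{Z}^4$. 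The classical bound $|S_{3,n}| \ll_{\epsilon} n^{1/2 + \epsilon}$ therefore provides a saving on this fiber, a saving genuinely unavailable for the paraboloid. I would then pigeonhole midpoints by both the magnitude $|\vec{y}|$ and the popularity $r_{2,A}(\vec{y})$, run the Bourgain--Demeter incidence count on each dyadic slice with this arithmetic input in place of the generic fiber bound, and optimize across the dyadic parameters; the constant $c = 1/232$ should emerge from this optimization.

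For the inductive step I plan to prove a recursion of the form
\[ E_{s+1,2}(A) \ll_{s,\epsilon} m^{\epsilon} |A|^{5s/3 + 17/36} \, E_{s,2}(A)^{1/6}. \]
Writing $\beta_s := \log_{|A|}(E_{s,2}(A)/|A|^{2s-2})$, this is the scalar inequality $\beta_{s+1} \leq 5/36 + \beta_s/6$, whose fixed point is $1/6$ and which contracts geometrically with rate $1/6$; iterating from $\beta_2 \leq (2-c)/6$ yields exactly $\beta_s \leq 1/6 + (1-c) \cdot 6^{-s+1}$, matching the theorem. The recursion itself I would derive on the Fourier side via the identity $E_{s,2}(A) = \|f\|_{L^{2s}(\mathbb{T}^{4})}^{2s}$ for $f(\xi) = \sum_{\vec{a} \in A} e(\vec{a} \cdot \xi)$, by Hölder-interpolating between $\|f\|_{L^{2s}}$ and a companion $L^{p}$ norm whose bound follows from a uniform large-$p$ discrete restriction inequality for $S_{4,m}$; the constant $5/36$ is the correct trade-off between the $L^{2s}$ and $L^{p}$ inputs.

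The main obstacle is the base case. The inductive step, while requiring careful choice of Hölder interpolants and a suitable large-$p$ input, is structurally clean once the correct weights are identified. By contrast, the base case must quantitatively improve an incidence bound that is already sharp for the paraboloid, and the bound $|S_{3,n}| \ll_{\epsilon} n^{1/2+\epsilon}$ provides only a modest saving that degrades as the auxiliary radius $\sqrt{4m - |\vec{y}|^2}$ becomes large (i.e.\ as $|\vec{y}|$ approaches the origin). The small value of $c = 1/232$ reflects the tightness of the dyadic trade-off needed to preserve a power saving uniformly across all regimes of $|\vec{y}|$ and of the popularity parameter.
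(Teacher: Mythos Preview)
Your base-case plan has a genuine gap. The parallelogram identity places $\vec{v}=\vec{a}_1-\vec{a}_2$ on a $2$-sphere inside the hyperplane $\vec{y}^\perp\subset\mathbb{R}^4$, not on $S_{3,n}$: the lattice $\vec{y}^\perp\cap\mathbb{Z}^4$ is generically not rectangular, so the divisor-type bound $|S_{3,n}|\ll_\epsilon n^{1/2+\epsilon}$ is not available. More seriously, any bound on the \emph{total} number of lattice points on a slice $S_{4,m}\cap H_{\vec{y}}$ is useless for arbitrary $A\subseteq S_{4,m}$, since the theorem must hold uniformly for sets with $|A|\ll m^{1/2}$, where such a fibre bound already exceeds $|A|$. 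No optimisation over $|\vec{y}|$ and popularity can extract a power saving uniformly in $A$ from this input, and nothing in the scheme points to the specific value $c=1/232$.

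The paper's base case is entirely different and is where the real content lies. It decomposes $A=X\cup Y$ by iteratively peeling off popular slices: $Y$ is covered by at most $|A|^{1/3-\delta}$ sets $A\cap(\vec{n}_i-A)$, and $E_{2,2}(Y)$ is bounded combinatorially via the sphere-specific fact (false for the paraboloid) that three distinct lattice translates of $S_{4,m}$ meet in $O_\epsilon(m^\epsilon)$ points. The remainder $X$ satisfies $r_2(X,\vec{n})<|A|^{2/3+\delta}$ for every $\vec{n}$, which forces $E_{2,3}(X)\ll_\epsilon m^\epsilon|X|^{8/3}$; this is fed into Shkredov's higher-energy Balog--Szemer\'edi--Gowers theorem to produce $X'\subseteq X$ with controlled $|2X'-X'|$, contradicted by the incidence lower bound $|2X'-X'|\gg_\epsilon m^{-\epsilon}|X'|^{1+19/24}$ coming from $E_{3,2}$. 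The constant $1/232$ is exactly the output of balancing the exponents in Shkredov's theorem against $19/24$. For the inductive step the paper again uses weighted point--sphere incidences in $\mathbb{R}^4$ (the $K_{3,t}$-freeness coming from the same three-translate lemma) to get $E_{s,2}(A)\ll_\epsilon m^\epsilon(|A|^{(12s-7)/8}E_{s-1,2}(A)^{1/4}+|A|^{2s-2})$, rather than Fourier-side H\"older; your H\"older route is not absurd in principle, but the ``large-$p$ restriction input'' you invoke is left unspecified, and the available decoupling bounds carry powers of $m$ rather than $|A|$, so they do not yield a clean recursion of the shape you claim.
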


When $s=2$, this gives us the bound
\[ E_{2,2}(A) \ll_{\epsilon} m^{\epsilon} |A|^{2 + 1/3 - 1/2766},\]
which obtains a power saving over the above-mentioned threshold bound in $\eqref{bdbnd}$, thus improving upon the work of Bourgain and Demeter and making progress towards their conjecture. Moreover, noting Conjecture $\ref{bdconsp}$ and the discussion thereafter, we see that Theorem $\ref{btdec}$ misses the conjectured bounds by a factor of at most $|A|^{1/6 + (1- c)\cdot 6^{-s+1}}$.
\par

We utilise a variety of different incidence geometric results in our proof of Theorem $\ref{btdec}$, and one of the crucial ingredients antecedent to these methods is the fact that any three distinct translates of $S_{4,m}$ intersect in at most $O_{\epsilon}(m^{\epsilon})$ points of $\mathbb{Z}^4$. Moreover, this does not hold true when $S_{4,m}$ is replaced by the set $P_{4,m}$ of lattice points on the truncated paraboloid, that is,
\begin{equation} \label{pardef}
 P_{4,m} = \{ (n_1, n_2, n_3, n_1^2 + n_2^2 + n_3^2) \in \mathbb{Z}^4 \ | \ -m \leq n_1, n_2, n_3 \leq m\}. 
 \end{equation} 
We employ this fact in conjunction with results concerning point--hyperplane and point--sphere incidences to obtain threshold bounds for $E_{s,2}(A)$ whenever $s \geq 2$. We will then use these bounds together with the higher energy method from arithmetic combinatorics as well as various elementary combinatorial geometric arguments to obtain threshold breaking bounds for $E_{2,2}(A)$, which, in turn, combine with the aforementioned circle of ideas to furnish threshold breaking bounds for $E_{s,2}(A)$ whenever $s \geq 2$. Moreover, these bounds on additive energies are closely related to discrete restriction estimates, and in fact, our main result delivers results of the latter flavour as well, see, for instance, Corollaries $\ref{restrict}$ and $\ref{restrict2}$. We point the reader to \S2 for more details regarding this, as well as for the discussion on discrepancies between the sphere and the paraboloid in the lattice point setting.
\par

We note that our methods also supply upper bounds for $E_{s,2}(A)$ when $A$ is a finite set of lattice points on the sphere in three dimensions.

\begin{theorem} \label{main}
Let $s \geq 2$ and let $A$ be a non-empty subset of $S_{3,m}$. Then, writing $\eta_{s} = 2^{-1} \cdot 3^{-s+2}$, we have
\[ E_{s,2}(A) \ll_{s, \epsilon} m^{\epsilon} |A|^{2s -3 + 1/2 + \eta_{s}}. \]
\end{theorem}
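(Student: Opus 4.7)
The plan is to prove Theorem~\ref{main} by induction on $s$, with the base case handled by a direct circle-counting argument and the inductive step by an analogue of the sphere-intersection mechanism used to prove Theorem~\ref{btdec}. For the base case $s = 2$, if $v = a + b$ with $a, b \in A \subseteq S_{3,m}$, then setting $a = v/2 + w$ gives $w \cdot v = 0$ and $|w|^2 = m - |v|^2/4$, so $a$ corresponds to a lattice point on the Euclidean circle of radius $\sqrt{m - |v|^2/4}$ in the affine plane through $v/2$ perpendicular to $v$. The number of such lattice points is $O_\epsilon(m^\epsilon)$ by the classical divisor bound on the number of representations of an integer by a positive-definite binary quadratic form. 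Hence $r(v) := \#\{(a,b) \in A^2 : a+b = v\} \ll_\epsilon m^\epsilon$ uniformly in $v$, and
\[
E_{2,2}(A) = \sum_v r(v)^2 \ll_\epsilon m^\epsilon |A|^2,
\]
matching the exponent $2s - 5/2 + \eta_s = 2$ at $s = 2$.

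For the inductive step, the goal is to establish the recursion
\[
E_{s+1,2}(A)^3 \ll_\epsilon m^\epsilon\, E_{s,2}(A)\, |A|^{4s+1},
\]
which, combined with the inductive hypothesis, yields the desired bound at $s+1$ with $\eta_{s+1} = \eta_s/3$, the recurrence satisfied by $\eta_s = (1/2)\cdot 3^{2-s}$ starting from $\eta_2 = 1/2$. To produce this, I would start from the identity
\[
E_{s+1,2}(A) = \sum_{h \in \mathbb{Z}^3} r_{A-A}(h)\, \Lambda_s(h), \qquad \Lambda_s(h) = \sum_v r_s(v)\, r_s(v+h),
\]
where $r_s(v) = \#\{(a_1, \dots, a_s) \in A^s : a_1+\cdots+a_s = v\}$ and $r_{A-A}(h) = \#\{(a, a') \in A^2 : a - a' = h\}$, obtained by designating one variable on each side of the defining equation for $E_{s+1,2}(A)$ and summing over the rest. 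The diagonal contribution at $h = 0$ equals $|A|\cdot E_{s,2}(A)$ and sits safely within the target by the induction hypothesis. For the off-diagonal, I would combine the bound $r_{A-A}(h) \ll_\epsilon m^\epsilon$ for $h \neq 0$ (from the same circle input as the base case) with a dyadic decomposition of the shifts by the size of $\Lambda_s(h)$, and apply H\"older's inequality with exponent three. The choice of exponent three is dictated by the key incidence-geometric input that any three distinct translates of $S_{3,m}$ meet in at most $O_\epsilon(m^\epsilon)$ lattice points of $\mathbb{Z}^3$, the three-dimensional analogue of the input used in the proof of Theorem~\ref{btdec}.

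The principal obstacle is carrying out the off-diagonal estimate sharply. A naive application of H\"older, using only the $L^\infty$ bound $\Lambda_s(h) \leq E_{s,2}(A)$, loses a factor of $|A|^{1/2 - \eta_s/3}$ relative to the required saving. Closing this gap amounts to exploiting that the maximum of $\Lambda_s(h)$ over $h \neq 0$ is typically far smaller than $E_{s,2}(A)$, and making this precise via the three-sphere intersection bound in a dyadic level-set analysis of $\Lambda_s$ is the most delicate point of the argument. This saving is what compounds geometrically through the induction to produce the factor $3^{-s}$ appearing in $\eta_s$.
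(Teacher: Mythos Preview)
Your base case and the target recursion
\[
E_{s+1,2}(A)^{3} \ll_{\epsilon} m^{\epsilon}\, E_{s,2}(A)\, |A|^{4s+1}
\]
are both correct and coincide exactly with what the paper proves (Lemma~\ref{iter} after reindexing). The gap is that you have not proved this recursion. Your decomposition
\[
E_{s+1,2}(A)=\sum_{h} r_{A-A}(h)\,\Lambda_{s}(h)
\]
together with the pointwise bound $r_{A-A}(h)\ll_{\epsilon} m^{\epsilon}$ for $h\neq 0$ only yields $E_{s+1,2}(A)\ll_{\epsilon} m^{\epsilon}|A|^{2s}$, which is the trivial bound from Lemma~\ref{trives}. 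You correctly diagnose the shortfall of $|A|^{1/2-\eta_{s}/3}$, but the proposed remedy (``dyadic level-set analysis of $\Lambda_{s}$ via the three-sphere intersection bound'') does not work: for $s\geq 2$ the function $\Lambda_{s}(h)=\sum_{v}r_{s}(v)r_{s}(v+h)$ lives on $sA-sA$, not on translates of $S_{3,m}$, so the sphere-intersection input has no grip on the level sets $\{h:\Lambda_{s}(h)\sim\tau\}$. Moreover, you have the wrong geometric input in dimension $3$: the paper uses that \emph{two} distinct translates of $S_{3,m}$ meet in $O_{\epsilon}(m^{\epsilon})$ lattice points (Lemma~\ref{hpitw} with $d=3$), not three; the three-translate bound is the $d=4$ input.

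The paper obtains the recursion by a different decomposition: one pulls a single variable from \emph{one} side only, writing
\[
E_{s,2}(A)=\sum_{\vec{u}\in (s-1)A}\ \sum_{\vec{v}\in sA}\ \sum_{\vec{a}\in A} r_{s-1}(\vec{u})\,r_{s}(\vec{v})\,\mathds{1}_{\vec{u}=\vec{v}-\vec{a}},
\]
which is a weighted incidence between points $\vec{u}\in (s-1)A$ (weight $r_{s-1}$) and translated spheres $\vec{v}-\mathcal{S}_{3,m}$ (weight $r_{s}$). The two-translate bound makes this incidence graph $K_{2,t}$-free with $t\ll_{\epsilon} m^{\epsilon}$, so the semi-algebraic Zarankiewicz estimate (Lemma~\ref{st1}) applies with exponents $4/5$ and $3/5$; converting to weights via Lemma~\ref{6on} gives Lemma~\ref{wtst}, and substituting $\n{w}_{2}^{2}=E_{s-1,2}(A)$, $\n{w'}_{2}^{2}=E_{s,2}(A)$, $\n{w}_{1}=|A|^{s-1}$, $\n{w'}_{1}=|A|^{s}$ yields exactly the recursion. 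The point is that the saving you are missing comes not from a H\"older step on $\Lambda_{s}$ but from a genuine point--sphere incidence bound, for which one must keep a sphere variable explicit rather than collapsing both sides to $r_{s}$.
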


As before, not only do the above bounds give rise to various discrete restriction type results, but they can be further used to estimate other measures of additivity for a subset $A$ of $S_{3,m}$. For instance, we see that additive energy is closely related to the number of ways we could represent an arbitrary element of $\mathbb{R}^3$ as an $s$-fold sum of elements of $A$. Thus, writing $r_{s}(A,\vec{n})$ to count the number of solutions to 
\[ \vec{x}_1 + \dots + \vec{x}_{s} = \vec{n}\]
with $\vec{x}_i \in A$ for each $1 \leq i \leq s$, we see that
\begin{equation} \label{cs1} 
E_{s,2}(A) = \sum_{\vec{n} \in sA} r_{s}(A,\vec{n})^2, 
\end{equation}
where $sA$ is the set of all $s$-fold sums from $A$, that is, 
\[ sA = \{ \vec{a}_1 + \dots + \vec{a}_s \ | \ \vec{a}_1, \dots, \vec{a}_s \in A \}. \]
Note that this is precisely the set of elements $\vec{n}$ for which $r_{s}(A,\vec{n}) >0$. Using our methods, we also obtain non-trivial estimates for 
\[ r_{s}(A) = \sup_{\vec{n}} r_{s}(A,\vec{n}), \]
that is, the $l_{\infty}$ norm of the representation function $r_{s}(A, \cdot)$.

\begin{theorem} \label{linf}
Let $s \geq 4$, let $m \in \mathbb{N}$, and let $A \subseteq S_{3,m}$. Then we have
\[ r_{s}(A) \ll_{s,\epsilon} m^{\epsilon} |A|^{s-3 + 1/2 + \lambda_{s} }, \]
where $\lambda_{s} =  2^{-1} 3^{-s/2 + 2}$ when $s$ is even, and $\lambda_{s} =  10^{-1} \cdot 3^{-(s-1)/2+3}$ when $s$ is odd.
\end{theorem}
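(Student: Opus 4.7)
The plan is to derive the sup-norm bounds on $r_s(A, \cdot)$ from the energy estimates of Theorem \ref{main} by applying Cauchy--Schwarz to the natural convolution identity $r_{a+b}(A, \cdot) = r_a(A, \cdot) \ast r_b(A, \cdot)$. The argument splits into two cases based on the parity of $s$.

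For even $s = 2t$ with $t \geq 2$, I would write
\[
r_{2t}(A, \vec{n}) = \sum_{\vec{m}} r_t(A, \vec{m})\, r_t(A, \vec{n}-\vec{m})
\]
and apply Cauchy--Schwarz to obtain $r_{2t}(A, \vec{n}) \leq E_{t,2}(A)$. An application of Theorem \ref{main} with parameter $t$ then yields $E_{t,2}(A) \ll_{t,\epsilon} m^\epsilon |A|^{2t - 5/2 + \eta_t}$, and since $\lambda_{2t} = \eta_t$ by inspection, this proves the claim for even $s$.

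For odd $s = 2t+1$ with $t \geq 2$, the analogous Cauchy--Schwarz applied to $r_{2t+1}(A, \cdot) = r_t(A, \cdot) \ast r_{t+1}(A, \cdot)$ only yields $r_{2t+1}(A) \leq (E_{t,2}(A)\, E_{t+1,2}(A))^{1/2}$, whose exponent is $2t - 3/2 + (\eta_t + \eta_{t+1})/2 = 2t - 3/2 + 3^{-t+1}$. This slightly overshoots the target $2t - 3/2 + \lambda_{2t+1} = 2t - 3/2 + (9/10)\cdot 3^{-t+1}$ by a factor of $|A|^{(1/10) \cdot 3^{-t+1}}$ (for instance $|A|^{1/30}$ when $s = 5$). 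To close this gap, I plan to dyadically decompose on the level sets of $r_t(A, \cdot)$ and $r_{t+1}(A, \cdot)$ and interpolate between the above $L^2$ bound and the cruder pointwise bound $r_{2t+1}(A) \leq |A| \cdot r_{2t}(A) \leq |A| \cdot E_{t,2}(A)$ inherited from the even case. The sphere-specific estimate $r_2(A, \vec{n}) \ll m^\epsilon$---which holds because any two lattice points on $S_{3,m}$ summing to a fixed vector lie on a circle containing at most $m^\epsilon$ lattice points---should help control the cross-terms in the interpolation.

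The principal obstacle I anticipate is achieving the sharp exponent $\lambda_{2t+1}$ in the odd case: direct Cauchy--Schwarz falls short by a small but positive margin, and closing it demands the careful level-set analysis described above, leveraging the particular structure of lattice points on $S_{3,m}$ rather than merely appealing to Theorem \ref{main} as a black box.
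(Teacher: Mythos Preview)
Your treatment of the even case $s=2t$ is correct and matches the paper's argument exactly (the paper phrases it via orthogonality and the triangle inequality, but the resulting bound $r_{2t}(A,\vec{n}) \le E_{t,2}(A)$ is the same as your Cauchy--Schwarz inequality).

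The odd case, however, has a genuine gap. You propose to interpolate between the Cauchy--Schwarz bound $r_{2t+1}(A) \le (E_{t,2}(A)\, E_{t+1,2}(A))^{1/2}$, whose excess exponent is $3^{-t+1}$, and the cruder bound $r_{2t+1}(A) \le |A| \cdot E_{t,2}(A)$, whose excess exponent is $\eta_t = (3/2)\cdot 3^{-t+1}$. But the second bound is \emph{weaker} than the first, so no convex combination of the two can drop below the Cauchy--Schwarz exponent $3^{-t+1}$; the target $\lambda_{2t+1} = (9/10)\cdot 3^{-t+1}$ stays out of reach. A dyadic level-set decomposition of $r_t$ and $r_{t+1}$ by itself introduces no information beyond the $L^1$, $L^2$, and $L^\infty$ norms already encoded in $|A|^t$, $E_{t,2}(A)$, and $r_t(A)$, so without an additional geometric input you cannot beat Cauchy--Schwarz this way.

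The paper's route is to supply exactly that geometric input. For odd $s=2p+1$ it writes (after disposing of degenerate terms)
\[ r_s(\vec{n}) \le \sum_{\vec{u},\vec{v} \in pA\setminus\{0\}} \ \sum_{\vec{a}\in A} r_p(\vec{u})\, r_p(\vec{v})\, \mathds{1}_{\vec{u} = \vec{n} - \vec{v} - \vec{a}}, \]
and observes that for fixed $\vec{v}$ the constraint $\vec{u} = \vec{n} - \vec{v} - \vec{a}$ with $\vec{a}\in A\subseteq S_{3,m}$ forces $\vec{u}$ to lie on a translate of $\mathcal{S}_{3,m}$. This turns the sum into a weighted point--sphere incidence count, to which Lemma~\ref{wtst} applies and gives
\[ r_s(\vec{n}) \ll_{s,\epsilon} m^\epsilon\bigl( E_{p,2}(A)^{3/5} |A|^{4p/5} + |A|^{2p-2}\bigr). \]
The exponent $3/5$ on $E_{p,2}(A)$ (in place of the $1/2$ that Cauchy--Schwarz would give) is exactly what produces $3\eta_p/5 = \lambda_s$. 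The fact $r_2(A,\vec{n}) \ll_\epsilon m^\epsilon$ that you mention is part of the geometry underlying this incidence estimate, but it must be deployed through the point--sphere incidence machinery rather than through a black-box interpolation of energy bounds.
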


In the specific case when $A = S_{3,m}$, Benatar and Maffucci \cite{BM2019} called $r_{s}(S_{3,m},0)$ the number of $3$-dimensional $s$-th lattice point correlations, and showed that upper bounds for these objects contribute to understanding the variance of the so called nodal area of random Gaussian Laplace eigenfunctions on $\mathbb{T}^3$. In this endeavour, they proved that whenever $s \geq 6$, one has
\begin{equation} \label{bmbd}
 r_{s}(S_{3,m},0) \ll_{\epsilon}  m^{\epsilon}|S_{3,m}|^{s- 3 + 2/3},  
 \end{equation}
and further asked about sharp estimates for $r_{s}(S_{3,m},0)$ (see \cite[Question $1.5$]{BM2019}). Thus, Theorem $\ref{linf}$ makes progress towards their question by providing bounds stronger than $\eqref{bmbd}$ whenever $s \geq 7$. Likewise, we also produce some improvement for small values of $s$. We note that Benatar and Maffucci \cite{BM2019} proved that
\[ r_{5}(S_{3,m},0) \ll_{\epsilon} m^{\epsilon}|S_{3,m}|^{2 + 5/6}, \]
while Theorem $\ref{linf}$ implies that
\[ r_{5}(S_{3,m},0) \leq r_{5}(S_{3,m}) \ll_{\epsilon} m^{\epsilon} |S_{3,m}|^{2 + 4/5}, \] 
thus, improving upon the previous known bound. 
\par

In a similar vein, we also note that when $s=2p+1$, for some $p \geq 2$, a standard application of the Cauchy-Schwarz inequality combined with our estimates from Theorem $\ref{main}$ gives us
\[ r_{s}(A,\vec{n}) \leq E_{p,2}(A)^{1/2} E_{p+1,2}(A)^{1/2}   \ll_{s,\epsilon} |A|^{s-3 + 1/2 + 2^{-1}(\eta_{p} + \eta_{p+1}) }, \]
for any $\vec{n}$. While these are already non-trivial bounds, we note that
\[ 2^{-1} (\eta_{p} + \eta_{p+1}) = 3^{-p+1} > 10^{-1} \cdot 3^{-p+3} = \lambda_{s},\]
thus indicating that Theorem $\ref{linf}$ delivers stronger bounds than just a convexity estimate combined with Theorem $\ref{main}$.
\par

Furthermore, even in three dimensions, while our results strengthen various previous known bounds, they are still far away from the conjectured estimates. For instance, in view of Lemma $\ref{lowerbd}$ and Conjecture $\ref{dsreq}$, we see that Theorem $\ref{main}$ misses the conjectured bound by a factor of  at most $|A|^{1/2 + 2^{-1} \cdot 3^{-s+2}}$. These conjectured estimates can be justified via various heuristics, the foremost of these being the discrete restriction problem for lattice points on spheres in three dimensions; we point the reader to \S2 for more details regarding this topic.
\par

We now elaborate on the methods employed in this paper. As previously mentioned, we use a combination of incidence geometric estimates, higher energy methods as developed by Schoen and Shkredov \cite{SS2011} in arithmetic combinatorics, and various other elementary combinatorial arguments. We note that this set of ideas shares some similarity with the techniques we used in \cite{Ak2020b}, where we work with subsets $\mathscr{A}$ of the parabola instead of spheres, but there are some crucial differences. The first one is that in \cite{Ak2020b}, we were able to obtain strong estimates for the so called higher energies of $\mathscr{A}$, that is, quantities counting solutions to systems of equations of the form 
\begin{equation} \label{highdef1}
 \sum_{i=1}^{s} \vec{x}_{i} = \dots = \sum_{i=sk-s+1}^{sk} \vec{x}_{i}, 
 \end{equation}
using incidence geometric and number theoretic ideas. We then utilised these bounds along with the higher energy method to break the threshold bounds for four-fold energies. In this paper, we are unable to employ this set of techniques to get as strong bounds for higher energies as we require for the higher energy method. In fact, for any finite $A \subseteq S_{4,m}$, our methods show that $E_{2,3}(A)$, that is, the number of solutions to $\eqref{highdef1}$ when $s=2$ and $k=3$ and $\vec{x}_1, \dots, \vec{x}_{6} \in A$, satisfies
\begin{equation} \label{starsdy}
 E_{2,3}(A) \ll_{\epsilon} m^{\epsilon} |A|^3, 
 \end{equation}
which does not seem to be strong enough to deliver estimates beyond the $O_{\epsilon}(m^{\epsilon}|A|^{2+1/3})$ threshold for $E_{2,2}(A)$. Moreover, this upper bound is sharp whenever $A$ is a symmetric set, by simply noting the solutions satisfying $\vec{x}_i = -\vec{x}_{i+1}$ for $i=1,3,5$. 
\par

Thus, in this paper, we do not focus on obtaining sufficiently strong bounds for higher energies of the set $A$, but instead, our main argument concerns decomposing $A$ into two subsets, say, $X$ and $Y$, each having distinct arithmetic properties. This decomposition argument ensures that $X$ does not exhibit extremally large number of solutions to equations of the form $\vec{x}_1 + \vec{x}_2 = \vec{n}$ for arbitrary values of $\vec{n}$, which, in turn, makes additive energies of $X$ amenable to a combination of incidence geometric and higher energy methods. On the other hand, we are able to ensure that the set $Y$ lies on a small number of slices of $S_{4,m}$, and so, we treat this case with purely elementary arguments by studying additive interactions amongst this collection of slices. We refer the reader to \S7 for more details regarding this argument.
\par

Finally, we can also study this problem when $A$ is chosen to be a finite, non-empty subset of $\mathcal{S}_{3, \lambda}$. This also forms a part of a larger inquiry regarding restriction theory for spheres, an area which has seen some major contributions from the recent work of Bourgain and Demeter \cite{BD2015}. In particular, we record the following result in this direction. 

\begin{theorem} \label{sdlyf}
Let $\lambda >0$ be a real number, let $s \geq 2$ be a natural number and let $A$ be a finite, non-empty subset of $\mathcal{S}_{3, \lambda}$. Then
\[  E_{s,2}(A) \ll_{\epsilon,s} |A|^{2s -2 + 2 \cdot 3^{-s}+ \epsilon} . \]
\end{theorem}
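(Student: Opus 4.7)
The plan is to proceed by induction on $s \geq 2$. For $\vec{d} \in \mathbb{R}^3$, set
\[ g(\vec{d}) = |\{(a, b) \in A^2 : b - a = \vec{d}\}|, \quad T_s(\vec{d}) = |\{(\vec{u}, \vec{v}) \in A^s \times A^s : \textstyle\sum_i (v_i - u_i) = \vec{d}\}|, \]
so that $T_s(0) = E_{s, 2}(A)$, $\sum_{\vec{d}} g(\vec{d}) = |A|^2$, $\sum_{\vec{d}} T_s(\vec{d}) = |A|^{2s}$, and $T_s(\vec{d}) \leq E_{s, 2}(A)$ pointwise. The essential geometric input is that for nonzero $\vec{d}$, the locus $\mathcal{S}_{3,\lambda} \cap (\mathcal{S}_{3,\lambda} - \vec{d})$ is a circle $C_{\vec{d}}$ on the sphere, whence $g(\vec{d}) \leq |A \cap C_{\vec{d}}|$.

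For the base case $s = 2$, observe that since any line meets $\mathcal{S}_{3,\lambda}$ in at most two points, no three points of $A$ are collinear, and every triple in $A$ determines a unique circle on $\mathcal{S}_{3,\lambda}$. This gives the third-moment inequality
\[ \sum_{\vec{d} \neq 0} g(\vec{d})^3 \leq 6 \binom{|A|}{3} \ll |A|^3. \]
Combining this with $\sum_{\vec{d}} g(\vec{d}) = |A|^2$ and a Szemer\'edi--Trotter-type bound for point-circle incidences on the sphere (obtained via stereographic projection to $\mathbb{R}^2$), a dyadic summation over the level sets of $g$ yields $E_{2, 2}(A) \ll_\epsilon |A|^{20/9 + \epsilon}$.

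For the inductive step, fixing the last summand on each side of the defining equation $\sum_{i=1}^{s+1} a_i = \sum_{i=1}^{s+1} b_i$ produces the identity $E_{s+1, 2}(A) = \sum_{\vec{d}} g(\vec{d}) T_s(\vec{d})$. An application of H\"older's inequality with exponents $3$ and $3/2$, together with the moment bound $\sum g^3 \ll |A|^3$ from the base case and the aggregate estimates on $T_s$, yields a recursive inequality of the shape
\[ E_{s+1, 2}(A)^3 \ll_\epsilon |A|^{4s + 2 + \epsilon} \cdot E_{s, 2}(A). \]
Substituting the inductive hypothesis $E_{s, 2}(A) \ll |A|^{2s - 2 + 2 \cdot 3^{-s} + \epsilon}$ then gives $E_{s+1, 2}(A) \ll |A|^{2s + 2 \cdot 3^{-s-1} + \epsilon}$, closing the induction.

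The principal difficulty lies in the base case: the elementary H\"older bound from $\sum g^3 \ll |A|^3$ and $\sum g = |A|^2$ yields only the weaker estimate $E_{2, 2}(A) \ll |A|^{5/2}$, so one needs a genuine Szemer\'edi--Trotter-type input for points and circles on $\mathcal{S}_{3,\lambda}$ to reach the exponent $20/9$. The inductive H\"older calculation is more routine, but the exponents must be calibrated carefully (including isolating the $\vec{d} = 0$ contribution, which is subsumed by the H\"older bound) so as to produce the exact recursion $3\alpha_{s+1} = \alpha_s + 4s + 2$ behind the claimed decaying correction $2 \cdot 3^{-s}$.
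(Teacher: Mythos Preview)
Your base case is essentially the paper's: both feed the circles $C_{\vec d}=\mathcal S_{3,\lambda}\cap(\vec d+\mathcal S_{3,\lambda})$ into the Aronov--Koltun--Sharir point--circle bound and recover $E_{2,2}(A)\ll_\epsilon|A|^{20/9+\epsilon}$. (Your third-moment estimate $\sum_{\vec d\ne 0}g(\vec d)^3\ll|A|^3$ is correct but is not actually what produces the exponent $20/9$; the incidence bound does all the work there.)

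The inductive step, however, does not close. Running H\"older with exponents $(3,3/2)$ exactly as you describe, using $\sum_{\vec d}g(\vec d)^3\ll|A|^3$ together with $\sum_{\vec d}T_s(\vec d)=|A|^{2s}$ and $T_s(\vec d)\le E_{s,2}(A)$, gives
\[
E_{s+1,2}(A)\;\le\;\Big(\sum_{\vec d}g(\vec d)^3\Big)^{1/3}\Big(\sum_{\vec d}T_s(\vec d)^{3/2}\Big)^{2/3}
\;\ll\;|A|\cdot\big(E_{s,2}(A)^{1/2}|A|^{2s}\big)^{2/3}
\;=\;|A|^{(4s+3)/3}\,E_{s,2}(A)^{1/3},
\]
i.e.\ $E_{s+1,2}(A)^3\ll|A|^{4s+3}E_{s,2}(A)$, not $|A|^{4s+2}E_{s,2}(A)$. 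One cannot do better with these inputs: interpolating $\sum g^p$ between $\sum g=|A|^2$ and $\sum g^3\ll|A|^3$ and optimising in $p$ still lands on $(4s+3)/3$. The resulting recursion $3\alpha_{s+1}=\alpha_s+4s+3$ solves to $\alpha_s=2s-\tfrac32+O(3^{-s})$, which is off from the target $2s-2+2\cdot3^{-s}$ by a full $|A|^{1/2}$ in the limit.

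What you are missing is precisely the ingredient the paper uses for the step $s-1\to s$: Zahl's point--sphere incidence theorem $I(P,L)\ll|P|^{3/4}|L|^{3/4}+|P|+|L|$ for translates of $\mathcal S_{3,\lambda}$. Writing $E_{s,2}(A)$ as a weighted incidence between points $\vec u\in(s-1)A$ (weight $r_{s-1}(\vec u)$) and unit spheres centred at $\vec v\in sA$ (weight $r_s(\vec v)$), and passing through the weighted version, yields
\[
E_{s,2}(A)\ \ll\ E_{s-1,2}(A)^{1/4}E_{s,2}(A)^{1/4}|A|^{s-1/2}+|A|^{2s-2},
\]
hence $E_{s,2}(A)\ll E_{s-1,2}(A)^{1/3}|A|^{(4s-2)/3}+|A|^{2s-2}$, which is exactly your claimed recursion $3\alpha_{s+1}=\alpha_s+4s+2$. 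The extra saving of $|A|^{1/3}$ per step comes from the $(3/4,3/4)$ exponents in Zahl's bound; the purely combinatorial fact $\sum g^3\ll|A|^3$ (equivalently, three translates of $\mathcal S_{3,\lambda}$ meet in $O(1)$ points) is strictly weaker and cannot reproduce it.
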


The $s=2$ case of this result was noted in \cite{Shef}, where it was remarked that one could obtain non-trivial bounds for $E_{2,2}(A)$ using point--circle incidences, and in fact, we extend this to larger values of $s$ via the means of point--sphere incidences. We point the reader to \S8 for further details regarding the history and known results associated with this topic.
%In \S8, we produce a detailed motivation and description of the various types of results associated with this problem.
\par

We now provide an outline of our paper. We use \S2 to record more applications of our work, specially those related to discrete restriction type estimates, while also discussing the sharpness of our results in context of various examples and conjectures. In \S3, we record the suitable incidence results that we will require for our results as well as record elementary properties of lattice points on the sphere. We utilise \S4 to set the the basic framework for our techniques to work in three dimensions, and we then use \S5 to prove our two main results in this setting, that is, Theorems $\ref{main}$ and $\ref{linf}$. Similarly, we dedicate \S6 to proving our first set of threshold results for $E_{s,2}(A)$ and $E_{2,3}(A)$ when $A$ is a subset of $S_{4,m}$. We then employ \S7 to prove our main result of this paper, that is, Theorem $\ref{btdec}$. Lastly, we use \S8 to record the proof of Theorem $\ref{sdlyf}$.
%Finally, we use \S8 to discuss the case when $A$ is a subset of $\mathcal{S}_{3, \lambda}$, as well as to record the proof of Theorem $\ref{sdlyf}$.
\par

\textbf{Notation.} In this paper, we use the Vinogradov notation, that is, we write $X \gg_{z} Y$, or equivalently $Y \ll_{z} X$, to mean $|X| \geq C_{z} |Y|$ where $C_z$ is some positive constant depending on the parameter $z$. Moreover, if $\epsilon$ has not been fixed, then whenever we write $X \ll_{\epsilon} Y$, it will mean that this bound holds for all $\epsilon >0$. For every natural number $k \geq 2$, we use boldface to denote vectors $\vec{x} = (x_1, x_2, \dots, x_k) \in \mathbb{R}^k$, and given vectors $\vec{x}, \vec{y} \in \mathbb{R}^k$, we write $\vec{x} \cdot \vec{y} = \sum_{i=1}^{k} x_iy_i$. Additionally, for every $\theta \in \mathbb{R}$, we use $e(\theta)$ to denote $e^{2\pi i \theta}$. Next, given a non-empty, finite set $Z$, we use $|Z|$ to denote the cardinality of $Z$, and finally, whenever we are working with a fixed set $A$, we will omit the parameter $A$ from the function $r_{s}(A,\cdot)$ and simply write $r_{s}$ to be the representation function associated with $A$.

\textbf{Acknowledgements.} The author is grateful for support and hospitality from University of Bristol and Purdue University. The author would like to thank Trevor Wooley for his guidance and encouragement. The author would also like to thank Cosmin Pohoata and Joshua Zahl for helpful comments and discussions.

%---------------------------------------------------------------------------------------------------------------------------
%---------------------------------------------------------------------------------------------------------------------------
%---------------------------------------------------------------------------------------------------------------------------
%---------------------------------------------------------------------------------------------------------------------------
%---------------------------------------------------------------------------------------------------------------------------
%---------------------------------------------------------------------------------------------------------------------------

\section{Further discussion and applications to discrete restriction estimates}

We use this section to discuss various applications, examples and conjectures associated with our results from \S1. We will begin by producing some lower bounds for $E_{s,2}(A)$ and related quantities, when $A$ is chosen to be $S_{d,m}$ for $d\in \{3,4\}$. We will then show that a well studied problem in discrete harmonic analysis known as the discrete restriction conjecture for lattice points on the sphere implies that these lower bounds are expected to be sharp, up to $O_{\epsilon}(m^{\epsilon})$ factors. We will then demonstrate how to proceed conversely, that is, how our own results on additive energies furnish various types of discrete restriction type estimates. Finally, we record some discussion contrasting the case of spheres and paraboloids.
\par

Thus, we now establish lower bounds for $E_{s,2}(A)$ and $r_{s}(A)$ in the case when $A=S_{d,m}$. In this endeavour, it is worth noting the various estimates we have for $|S_{3,m}|$, and in particular, it is well known that $|S_{d,m}| \ll_{\epsilon} m^{d/2-1 + \epsilon}$. As for lower bounds, work of Legendre implies that the set $S_{3,m}$ is non-empty if and only if $n$ is not of the form $4^{a}(8b+7)$ for non-negative integers $a$ and $b$. Moreover, if $m \not\equiv 0,4,7 \pmod 8$, then $|S_{3,m}| \gg_{\epsilon} m^{1/2 - \epsilon}$  (see \cite{BSR2016}). For our purposes, it is also useful to know that for each odd natural number $m$, we have $|S_{4,m}| \gg m$ (see \cite[Theorem $386$]{HW1979}). This naturally leads us to the following lemma.
\par

\begin{lemma} \label{lowerbd}
Let $m$ be a natural number satisfying $m \not\equiv 0,4,7 \pmod 8$. Then, for each $s \geq 3$, we have
\[ r_{s}(S_{3,m})   \gg_{s, \epsilon} |S_{3,m}|^{s- 3- \epsilon} \ \text{and} \ E_{s,2}(S_{3,m})   \gg_{s, \epsilon} |S_{3,m}|^{2s - 3- \epsilon} . \]
Similarly, let $m$ be an odd natural number, and let $s\geq 2$. Then
\[ E_{s,2}(S_{4,m}) \gg_{s} |S_{4,m}|^{2s-2}. \]
\end{lemma}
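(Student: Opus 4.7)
The plan is to obtain all three lower bounds from a single mechanism: sandwiching the sumset $sA$ inside a ball of the appropriate radius to control $|sA|$, and then pulling out lower bounds on $E_{s,2}(A)$ and $r_s(A)$ via Cauchy--Schwarz and pigeonhole, respectively.

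The key geometric observation is that if $\vec{x}_1, \dots, \vec{x}_s \in S_{d,m}$, then by the triangle inequality $|\vec{x}_1 + \dots + \vec{x}_s| \leq s\sqrt{m}$, so $sS_{d,m} \subseteq \mathbb{Z}^d \cap B(\vec{0}, s\sqrt{m})$, which has cardinality $O_s(m^{d/2})$. Thus $|sS_{3,m}| \ll_s m^{3/2}$ and $|sS_{4,m}| \ll_s m^2$. Next, from the identity $\sum_{\vec{n}} r_s(A,\vec{n}) = |A|^s$ together with the support being contained in $sA$, Cauchy--Schwarz gives
\[
|A|^{2s} \leq |sA| \cdot \sum_{\vec{n}} r_s(A,\vec{n})^2 = |sA| \cdot E_{s,2}(A),
\]
so $E_{s,2}(A) \geq |A|^{2s}/|sA|$. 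Similarly, by pigeonhole, there exists $\vec{n}^\ast$ with $r_s(A,\vec{n}^\ast) \geq |A|^s/|sA|$, hence $r_s(A) \geq |A|^s/|sA|$.

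For the three-dimensional case, I would apply these bounds with $A = S_{3,m}$ to obtain $r_s(S_{3,m}) \gg_s |S_{3,m}|^s / m^{3/2}$ and $E_{s,2}(S_{3,m}) \gg_s |S_{3,m}|^{2s}/m^{3/2}$. To convert the powers of $m$ into powers of $|S_{3,m}|$, I would invoke the hypothesis $m \not\equiv 0,4,7 \pmod 8$ together with the cited bound $|S_{3,m}| \gg_\epsilon m^{1/2-\epsilon}$, which yields $m^{3/2} \ll_\epsilon |S_{3,m}|^{3+\epsilon'}$ for any $\epsilon' > 0$; substituting this gives the two claimed estimates with the desired $-\epsilon$ loss. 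For the four-dimensional case with $A = S_{4,m}$, the same Cauchy--Schwarz step yields $E_{s,2}(S_{4,m}) \gg_s |S_{4,m}|^{2s}/m^2$, and the clean bound $|S_{4,m}| \gg m$ recorded just before the lemma (valid for odd $m$) lets one replace $m^2$ by $|S_{4,m}|^2$ without any $\epsilon$-loss, giving $E_{s,2}(S_{4,m}) \gg_s |S_{4,m}|^{2s-2}$.

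There is no serious obstacle in this argument: the whole thing hinges on the trivial ball-packing bound for $|sS_{d,m}|$ and an application of Cauchy--Schwarz. The only bookkeeping concern is tracking the $\epsilon$ losses when converting between $m$ and $|S_{3,m}|$ in the three-dimensional part, where the mismatch between the lower bound $m^{1/2-\epsilon}$ and the trivial upper bound $m^{1/2+\epsilon}$ for $|S_{3,m}|$ forces the $-\epsilon$ appearing in the statement; this is genuinely necessary since the ratio $m^{3/2}/|S_{3,m}|^3$ can be as large as $m^{O(\epsilon)}$.
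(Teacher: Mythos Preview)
Your proposal is correct and matches the paper's proof essentially step for step: the paper also bounds $|sS_{d,m}|$ by containing it in a box of side $O_s(\sqrt{m})$, then applies the same double-counting/pigeonhole argument for $r_s$ and the same Cauchy--Schwarz argument for $E_{s,2}$, converting between $m$ and $|S_{d,m}|$ via the cited lower bounds. The only cosmetic difference is that the paper uses a cube rather than a ball to bound $|sS_{d,m}|$.
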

\begin{proof}
Noting the preceding discussion, we see that assuming $m \not\equiv 0,4,7 \pmod 8$ ensures that $m^{1/2 - \epsilon} \ll_{\epsilon} |S_{3,m}| \ll_{\epsilon} m^{1/2 + \epsilon}$. Moreover, we have $s S_{3,m} \subseteq \mathbb{Z}^3 \cap [-s\sqrt{m}, s\sqrt{m}]^3,$ whence, $|s S_{3,m}| \ll_{s} m^{3/2}.$ We put these together with a double counting argument to get
\[ |s S_{3,m}| r_{s}(S_{3,m})  \geq \sum_{\vec{n} \in s S_{3,m}} r_{s,S_{3,m}}(\vec{n}) = |S_{3,m}|^{s}, \]
and so, we obtain
\[ r_{s}(S_{3,m}) \gg_{s} |S_{3,m}|^{s} m^{-3/2}  \gg_{s, \epsilon} |S_{3,m}|^{s- 3- \epsilon}.\]
 Similarly, combining our bounds for $|S_{3,m}|$ and $|sS_{3,m}|$ along with a straightforward application of the Cauchy-Schwarz inequality, we find that
\[ E_{s,2}(S_{3,m})  \geq |S_{3,m}|^{2s} |s S_{3,m}|^{-1} \gg_{s} |S_{3,m}|^{2s} m^{-3/2} \gg_{s, \epsilon} |S_{3,m}|^{2s - 3- \epsilon} . \]
\par

Likewise, in the four dimensional case, we see that the sumset $sS_{4,m}$ satisfies
\[ sS_{4,m} \subseteq \mathbb{Z}^4 \cap [-s\sqrt{m}, s\sqrt{m}]^4, \]
and consequently, we get $|s S_{4,m}| \ll_{s} m^{2}$. Combining this with another application of the Cauchy--Schwarz inequality, we obtain the bound
\[ E_{s,2}(S_{4,m}) \geq |S_{4,m}|^{2s} |s S_{4,m}|^{-1} \gg_{s} |S_{4,m}|^{2s}  m^{-2} \gg_{s} |S_{4,m}|^{2s - 2}.  \qedhere \]
\end{proof}

The estimates provided by Lemma $\ref{lowerbd}$ are expected to be sharp, up to factors of $m^{\epsilon}$, and in fact, the four dimensional estimates are known to be so, from a result of Bourgain and Demeter (see \cite[Theorem $5.1$]{BD2015a}). We describe these estimates as a part of the more general phenomenon known as the discrete restriction conjecture for lattice points on the sphere. This was originally studied by Bourgain \cite{Bo1993a}, who noted its connections with eigenfunctions of the Laplacian on $\mathbb{T}^d$ and made the following conjecture.

\begin{Conjecture} \label{dscres}
For every natural number $d \geq 3$, every complex number $\frak{a}_{\vec{n}}$  with $\vec{n} \in S_{d,m}$, every $\epsilon >0$ and every $p \geq \frac{2d}{d-2}$, we have
%\begin{equation} \label{besm}
\[ \int_{[0,1)^d}  \big|\sum_{\vec{n} \in S_{d,m} } \frak{a}_{\vec{n}} e(\vec{\alpha} \cdot \vec{n})\big|^{p} d \vec{\alpha}  \ll_{p,\epsilon} m^{\frac{(d-2)p}{4} - \frac{d}{2} + \epsilon} \big(\sum_{\vec{n} \in S_{d,m} } |\frak{a}_{\vec{n}}|^2\big)^{p/2}. \]
%\end{equation}
\end{Conjecture}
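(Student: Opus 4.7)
The plan is to reduce Conjecture~\ref{dscres} to additive energy estimates on $S_{d,m}$ via the standard duality between $L^p$ averages of exponential sums and solution counts for~\eqref{abad}. First I would reduce to the critical exponent $p_c = 2d/(d-2)$: for $p > p_c$ one writes $\int |\hat{f}|^p \leq \|\hat{f}\|_{L^\infty}^{p-p_c} \int |\hat{f}|^{p_c}$ and combines the $L^{p_c}$ bound with the trivial estimate $\|\hat{f}\|_{L^\infty} \leq |S_{d,m}|^{1/2}\bigl(\sum|\frak a_{\vec n}|^2\bigr)^{1/2}$ and $|S_{d,m}| \ll m^{(d-2)/2+\epsilon}$, the arithmetic working out so as to reproduce the $m^{(d-2)p/4 - d/2}$ factor in the conjecture. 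One may therefore assume $p = p_c$, so that $2s := p_c$ is an even integer, equal to $4$ when $d=4$ and to $6$ when $d=3$. Next I would dyadically decompose the coefficients, setting $A_j = \{\vec n \in S_{d,m} : |\frak a_{\vec n}| \sim 2^{-j}\}$, to reduce to the case $\frak a_{\vec n} = \mathds{1}_A(\vec n)$ for a subset $A \subseteq S_{d,m}$, at the cost of a sum over the $O(\log m)$ dyadic scales, which is absorbable into the $m^\epsilon$ factor.

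For such characteristic coefficients, expanding the $2s$-th power of the modulus and applying the orthogonality relation $\int_{[0,1)^d} e(\vec\alpha\cdot\vec k)\,d\vec\alpha = \mathds{1}_{\vec k = \vec 0}$ reduces the integral on the left-hand side of the conjecture to exactly $E_{s,2}(A)$. Since $(d-2)p_c/4 - d/2 = 0$, the target inequality becomes $E_{s,2}(A) \ll_{\epsilon} m^{\epsilon}|A|^{s}$, which in four dimensions ($s=2$) is precisely Conjecture~\ref{bdconsp}, and in three dimensions ($s=3$) is the natural conjecture $E_{3,2}(A) \ll_\epsilon m^\epsilon |A|^3$ matching the lower bound recorded in Lemma~\ref{lowerbd}.

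The main obstacle is therefore exactly this last step: the sharp critical additive energy estimate is open, even in four dimensions, and that is itself the content of Conjecture~\ref{bdconsp}. The partial bounds furnished by Theorem~\ref{btdec}, namely $E_{2,2}(A) \ll_\epsilon m^\epsilon |A|^{7/3 - 1/1392}$, and by Theorem~\ref{main} in three dimensions, feed through the reduction above to yield non-trivial restriction-type estimates, but these fall short of Conjecture~\ref{dscres} by a polynomial factor in $|A|$. Closing the gap appears to require a sharp description of how lattice points on $S_{d,m}$ can concentrate on affine subspaces and lower-dimensional subvarieties, a phenomenon responsible both for the lower bounds in Lemma~\ref{lowerbd} and for the principal source of loss in purely incidence-theoretic arguments; consequently Conjecture~\ref{dscres} is recorded here as motivation for, rather than a consequence of, the additive energy bounds developed in this paper.
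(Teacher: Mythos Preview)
Your analysis is correct: Conjecture~\ref{dscres} is recorded in the paper as an open problem, not proven, and your reduction of the critical case $p=p_c$ to the additive energy estimates $E_{s,2}(A)\ll_\epsilon m^\epsilon|A|^s$ via orthogonality and a dyadic decomposition of the coefficients is exactly the mechanism the paper formalises in Lemma~\ref{bopie} and the discussion following Conjecture~\ref{dsreq}. Your treatment of the supercritical range $p>p_c$ via the trivial $L^\infty$ bound and $|S_{d,m}|\ll_\epsilon m^{d/2-1+\epsilon}$ is also standard and correct, so your conclusion that the conjecture is motivation rather than a consequence of the paper's results matches the paper precisely.
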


When $d=3$, we can choose $p = 6$ and $\frak{a}_{\vec{n}}=1$ for each $\vec{n} \in A$, where $A$ is some arbitrary, non-empty subset of $S_{3,m}$, and subsequently use orthogonality to derive the conjectured upper bound $E_{3,2}(A) \ll_{\epsilon} m^{\epsilon} |A|^{3}.$ Likewise, when $d=4$,  Conjecture $\ref{dscres}$ allows us to set $p=4$, in which case, we may choose $\frak{a}_{\vec{n}}=1$ for each $\vec{n} \in A$, where $A$ is some arbitrary, non-empty subset of $S_{4,m}$. As before, applying orthogonality, we deduce that $E_{2,2}(A) \ll_{\epsilon} m^{\epsilon} |A|^{2}\,$ which is the result conjectured in Conjecture $\ref{bdconsp}$. We note that both these conjectured estimates can be extended to larger values of $s$ by a simple application of the triangle inequality and we record this as follows.

\begin{Conjecture} \label{dsreq}
Let $m$ be a natural number, let $A$ be a non-empty subset of $S_{3,m}$, and let $s \geq 3$. Then 
\begin{equation} \label{con1}
 E_{s,2}(A) \ll_{s, \epsilon} m^{\epsilon} |A|^{2s - 3}. 
 \end{equation}
Similarly, let $A$ be a non-empty subset of $S_{4,m}$ and let $s \geq 2$. Then
\begin{equation} \label{con2}
 E_{s,2}(A) \ll_{s, \epsilon} m^{\epsilon} |A|^{2s-2}. 
 \end{equation}
\end{Conjecture}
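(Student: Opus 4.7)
The plan is to deduce Conjecture \ref{dsreq} directly from Conjecture \ref{dscres} by means of the triangle inequality together with orthogonality. For any non-empty subset $A$ of $S_{d,m}$, introduce the exponential sum $f(\vec{\alpha}) = \sum_{\vec{n}\in A} e(\vec{\alpha}\cdot\vec{n})$, and observe that, by orthogonality of characters over $[0,1)^d$, one has $E_{s,2}(A) = \int_{[0,1)^d} |f(\vec{\alpha})|^{2s}\, d\vec{\alpha}$, since the integral counts precisely the tuples $(\vec{x}_1,\dots,\vec{x}_{2s}) \in A^{2s}$ satisfying \eqref{abad}. Moreover, the left hand side of Conjecture \ref{dscres} with $\frak{a}_{\vec{n}} = 1_{A}(\vec{n})$ equals $E_{p/2,2}(A)$ whenever $p$ is an even integer, which is the link that ties the two statements together.

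For the four-dimensional statement \eqref{con2}, I would first specialise Conjecture \ref{dscres} to $d=4$ and $p=4$ with $\frak{a}_{\vec{n}} = 1_{A}(\vec{n})$, which immediately yields the base bound $E_{2,2}(A) \ll_{\epsilon} m^{\epsilon}|A|^2$; this is exactly Conjecture \ref{bdconsp}. To pass from $s=2$ to general $s \geq 2$, I would invoke the triangle inequality for exponential sums in the form $\|f\|_\infty \leq |A|$, together with the pointwise inequality $|f(\vec{\alpha})|^{2s} \leq \|f\|_\infty^{2s-4}\cdot |f(\vec{\alpha})|^4$. Integrating over $[0,1)^4$ and inserting the base bound then produces $E_{s,2}(A) \leq |A|^{2s-4} E_{2,2}(A) \ll_{s,\epsilon} m^{\epsilon} |A|^{2s-2}$, as required.

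The three-dimensional statement \eqref{con1} follows by an entirely parallel argument. Here the appropriate base index is $s_0 = 3$, corresponding to the critical exponent $p = 2d/(d-2) = 6$ in Conjecture \ref{dscres}; taking $\frak{a}_{\vec{n}} = 1_{A}(\vec{n})$ yields $E_{3,2}(A) \ll_{\epsilon} m^{\epsilon}|A|^3$. Applying the pointwise bound $|f|^{2s} \leq \|f\|_\infty^{2s-6} |f|^6$, integrating, and using $\|f\|_\infty \leq |A|$ once more gives $E_{s,2}(A) \leq |A|^{2s-6} E_{3,2}(A) \ll_{s,\epsilon} m^{\epsilon} |A|^{2s-3}$ for all $s \geq 3$.

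The only obstacle to making this argument unconditional is of course the base case itself: establishing Conjecture \ref{dscres} at the critical exponent $p = 2d/(d-2)$ is the crux of the discrete restriction problem and remains widely open for $d\in\{3,4\}$. Granted that hypothesis, however, no further work is required, since the reduction above uses only orthogonality and the trivial uniform bound on $f$; this is precisely why Conjecture \ref{dsreq} may be recorded as a direct consequence of the triangle inequality rather than as an independent theorem.
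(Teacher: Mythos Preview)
Your proposal is correct and follows precisely the route sketched in the paper: the paper states that the base cases $E_{3,2}(A)\ll_\epsilon m^\epsilon|A|^3$ (for $d=3$) and $E_{2,2}(A)\ll_\epsilon m^\epsilon|A|^2$ (for $d=4$) follow from Conjecture~\ref{dscres} at the critical exponent via orthogonality with $\frak{a}_{\vec{n}}=\mathds{1}_A(\vec{n})$, and that these extend to all larger $s$ by the triangle inequality, which is exactly your pointwise bound $|f|^{2s}\le \|f\|_\infty^{2s-2s_0}|f|^{2s_0}$ with $\|f\|_\infty\le|A|$. Since the statement is recorded as a conjecture, the paper gives only this heuristic derivation rather than a formal proof, and you have correctly identified that the reduction is unconditional while the base case is not.
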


Moreover, noting Lemma $\ref{lowerbd}$, we see that the above estimates are expected to be sharp, up to factors of $O_{\epsilon}(m^{\epsilon})$. We remark that Conjecture $\ref{dscres}$ has been a subject of major work recently (see the discussion surrounding \cite[Conjecture $2.6$]{BD2015}), and the best known results for this problem arise from the recent breakthrough work of Bourgain and Demeter \cite{BD2015} on the $l^2$ decoupling conjecture. In particular, they show that whenever $d \geq 4$, Conjecture $\ref{dscres}$ holds for all $p \geq 2(d-1)/(d-3)$.
\par

As for the conjectured bound $\eqref{con2}$,  whenever $s\geq 3$, the previous best known estimates in this direction seem to arise from the aforementioned results of Bourgain and Demeter \cite{BD2015, BD2015a}, which imply that
\[ E_{s,2}(A) \ll_{\epsilon, s } m^{\epsilon} \min\{ m|A|^{2s - 3}, |A|^{2s - 2 + 1/3}\}. \]
Thus, Theorem $\ref{btdec}$ performs provides a power saving over the above bound whenever we have 
\[ |A| \ll m^{1/(1 + 6^{-1} + (1-c/2) \cdot 6^{-s+1})}.\]
 This, for example, includes the range when $|A| \ll m^{\frac{6}{7 + 6^{-s+2}}}.$
%\[ |A| \in [1, m^{1/(1 + 6^{-1} + (1-c/2) \cdot 6^{-s+1})} ] \subsetneq [1, m^{\frac{6}{7 + 6^{-s+2}}}]. \]
Furthermore, as mentioned before, using much more involved number theoretic methods, Bourgain and Demeter \cite{BD2015a} showed that $\eqref{con2}$ holds when $A=S_{4,m}$, which, in turn, gives us
\[ E_{2,2}(A) \leq E_{2,2}(S_{4,m}) \ll_{\epsilon} m^{2 + \epsilon}. \]
Hence, in the case when $s=2$, Theorem $\ref{btdec}$ delivers a power saving over the above estimate whenever $|A| \ll m^{1/(1+ 1/6 - c/3)}$. This, for instance, happens whenever $|A| \ll m^{6/7}$. 
%\[ |A| \in [1, m^{1/(1+ 1/6 - c/3)} ] \subsetneq [1, m^{6/7}]. \]
Lastly, in the three dimensional case, Theorem $\ref{main}$ presents the best known estimates towards the conjectured bound $\eqref{con1}$. 
\par

Through the preceding paragraphs, we have seen that discrete restriction estimates deliver bounds for the corresponding additive energies in a straightforward manner (see also \cite[Proof of Theorem $1.4$]{Ak2020}). It has been noted in several works \cite{BD2015a, BD2015, GG2019} that this type of an implication can be reversed as well. We now use their results to convert our bounds on additive energies into discrete restriction estimates. Thus, for each finite, non-empty subset $S$ of $\mathbb{R}^d$, we write $E_{s,2}(S)$ to count the number of solutions to $\eqref{abad}$ with $\vec{x}_1, \dots, \vec{x}_{2s} \in S$, and with this definition in hand, we record the following lemma.
\par

\begin{lemma} \label{bopie}
Let $d,s$ be natural numbers such that $s \geq 2$, let $S$ be some finite, non-empty subset of $\mathbb{R}^d$, and for each $\vec{n} \in S$, let $\frak{a}_{\vec{n}}$ be a complex number. Let $\delta_{s}\in [s,2s]$ and $C = C_{s,S}>0$ be some constants such that one has $E_{s,2}(X) \leq C |X|^{\delta_{s}}$ for each $X \subseteq S$. Then for any real number $R>0$ that is large enough in terms of $S$, we have
\[ \Big( R^{-d} \int_{[0,R)^d} \big| \sum_{\vec{n} \in  S } \frak{a}_{\vec{n}} e(\vec{\alpha} \cdot \vec{n}) \big|^{2s} d \vec{\alpha} \Big)^{1/2s}  \ll_{s} C^{1/2s} (\log |S|)^{1- \delta_{s}/2s} \big( \sum_{\vec{n} \in S} |\frak{a}_{\vec{n}}|^{2s/\delta_{s}} \big)^{\delta_s/2s}. \]
\end{lemma}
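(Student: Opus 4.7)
The plan is to derive the discrete restriction estimate from the additive-energy hypothesis via the standard diagonal-extraction procedure combined with a dyadic-pigeonhole argument on the coefficients.

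First, I would expand the $2s$-th power inside the integral to obtain
\[ \Big| \sum_{\vec{n} \in S} \frak{a}_{\vec{n}} e(\vec{\alpha} \cdot \vec{n}) \Big|^{2s} = \sum_{\vec{n}_1, \ldots, \vec{n}_{2s} \in S} \frak{a}_{\vec{n}_1} \cdots \frak{a}_{\vec{n}_s} \overline{\frak{a}_{\vec{n}_{s+1}}} \cdots \overline{\frak{a}_{\vec{n}_{2s}}} \, e(\vec{\alpha} \cdot \vec{y}), \]
where $\vec{y} = \vec{n}_1 + \cdots + \vec{n}_s - \vec{n}_{s+1} - \cdots - \vec{n}_{2s}$. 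Integrating coordinatewise over $[0,R)^d$ and dividing by $R^d$ produces a factor $\prod_{j} \frac{e(R y_j) - 1}{2 \pi i R y_j}$, which is $1$ when $\vec{y} = \vec{0}$ and at most $O(R^{-1}|y_j|^{-1})$ in some coordinate with $y_j \neq 0$. Since $S$ is finite, the set $sS - sS$ is finite and the quantity $\min\{|y_j| : y_j \neq 0, \, \vec{y} \in sS - sS\}$ is a positive constant depending only on $S$, so choosing $R$ large enough in terms of $S$ forces the off-diagonal contribution to be smaller than any desired power of the diagonal contribution
\[ T := \sum_{\substack{\vec{n}_1, \ldots, \vec{n}_{2s} \in S \\ \vec{n}_1 + \cdots + \vec{n}_s = \vec{n}_{s+1} + \cdots + \vec{n}_{2s}}} \frak{a}_{\vec{n}_1} \cdots \frak{a}_{\vec{n}_s} \overline{\frak{a}_{\vec{n}_{s+1}}} \cdots \overline{\frak{a}_{\vec{n}_{2s}}}. \]

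Next, I would dyadically decompose the coefficient moduli. Setting $M = \max_{\vec{n} \in S} |\frak{a}_{\vec{n}}|$, I define $S_k = \{\vec{n} \in S : M 2^{-k-1} < |\frak{a}_{\vec{n}}| \leq M 2^{-k}\}$ for $0 \leq k \leq K$ with $K = O_s(\log|S|)$; the contribution from those $\vec{n}$ with $|\frak{a}_{\vec{n}}| \leq M 2^{-K-1}$ is trivially controlled and absorbed into the final bound. For fixed levels $k_1, \ldots, k_{2s}$, the multilinear Cauchy--Schwarz inequality (obtained by writing the count as an integral of a product of exponential sums and applying Hölder) gives
\[ \Big| \big\{ (\vec{x}_1, \ldots, \vec{x}_{2s}) \in A_1 \times \cdots \times A_{2s} : \vec{x}_1 + \cdots + \vec{x}_s = \vec{x}_{s+1} + \cdots + \vec{x}_{2s}\big\} \Big| \leq \prod_{i=1}^{2s} E_{s,2}(A_i)^{1/(2s)}. \]
Applying the hypothesis $E_{s,2}(S_{k_i}) \leq C |S_{k_i}|^{\delta_s}$ and summing over all choices of $(k_i)$ yields, after factoring,
\[ |T| \leq C \cdot M^{2s} \prod_{i=1}^{2s} \Big( \sum_{k=0}^K 2^{-k} \, |S_k|^{\delta_s/(2s)} \Big). \]

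Finally, setting $p = 2s/\delta_s \in [1, 2]$, and letting $p' = 2s/(2s-\delta_s)$ denote the Hölder conjugate, I would apply Hölder to the single-index sum above:
\[ \sum_{k=0}^K M \cdot 2^{-k} |S_k|^{1/p} \leq (K+1)^{1/p'} \Big( \sum_{k=0}^K M^p 2^{-kp} |S_k| \Big)^{1/p} \ll_s (\log|S|)^{1 - \delta_s/(2s)} \Big( \sum_{\vec{n} \in S} |\frak{a}_{\vec{n}}|^{2s/\delta_s} \Big)^{\delta_s/(2s)}, \]
where the last estimate uses $|\frak{a}_{\vec{n}}| > M 2^{-k-1}$ on $S_k$ to compare $\sum_k M^p 2^{-kp} |S_k|$ with $\|\frak{a}\|_{\ell^p}^p$. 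Raising the bound on $|T|$ to the $1/(2s)$ power recovers the claimed inequality. The only delicate step is isolating the diagonal in Step one via the choice of $R$; the rest is a routine dyadic-pigeonhole and Hölder computation.
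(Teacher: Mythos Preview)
Your argument is correct and follows essentially the same strategy as the paper: extract the diagonal by taking $R$ large, then run a dyadic level-set decomposition on the coefficients together with H\"older to pick up the $(\log|S|)^{1-\delta_s/2s}$ loss. The paper packages the second half slightly differently: it defines the sublinear operator $T(\vec{a}) = (R^{-d}\int |\sum \frak{a}_{\vec{n}} e(\vec{\alpha}\cdot\vec{n})|^{2s})^{1/2s}$, verifies $T(\vec{1}_X) \ll C^{1/2s}|X|^{\delta_s/2s}$ on indicators, and then invokes \cite[Lemma 3.1]{GG2019} as a black box for the restricted-weak-type to strong-type step. Your version is more self-contained: you expand the diagonal sum with general coefficients and use multilinear Cauchy--Schwarz to control the cross-level terms, whereas the paper's route avoids that step because the triangle inequality (sublinearity of $T$) already separates the levels. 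Both approaches are standard and equivalent in strength; yours has the advantage of not relying on an external lemma, while the paper's is shorter to write.
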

\begin{proof}
Given an element $\vec{a} = (\frak{a}_{\vec{n}})_{\vec{n} \in S}$ lying in $\mathbb{C}^{|S|}$, we consider the sublinear function $T: \mathbb{C}^{|S|} \to [0, \infty)$ defined as 
\[ T(\vec{a}) = \Big( R^{-d} \int_{[0,R]^d} \big| \sum_{\vec{n} \in S} \frak{a}_{\vec{n}} e(\vec{\alpha} \cdot \vec{n}) \big|^{2s} d \vec{\alpha} \Big)^{1/2s}. \]
Moreover, for each $X \subseteq S$, we define $\vec{a}_{X} = (\frak{a}_{X,\vec{n}})_{\vec{n} \in S}$ by taking $\frak{a}_{X,\vec{n}} = 1$ when $\vec{n} \in X$ and $\frak{a}_{X,\vec{n}} = 0$ when $\vec{n} \in S\setminus X$. Note that, as in the proof of \cite[Theorem $2.8$]{BD2015}, we have
\[ T(\vec{a}_{X})^{2s} \ll  |X|^{2s}R^{-1} \nu^{-1} + E_{s,2}(X),  \]
where 
\[ \nu = \min\big\{\big|\sum_{i=1}^{s} (\vec{a}_{i}- \vec{a}_{i+s})\big| \ : \ \vec{a}_i \in S \ \text{and} \ \sum_{i=1}^{s} (\vec{a}_{i}- \vec{a}_{i+s}) \neq 0\big\}  \]
measures the ``additive geometry'' of $S$. Thus using the hypothesis that $|X|^{s} \leq E_{s}(X) \leq C |X|^{\delta_{s}}$ and taking $R \gg |S|^{2s} \nu^{-1}$, we find that
\[ T(\vec{a}_{X}) \ll_{s} C^{1/2s} |X|^{\delta_{s}/2s}. \]
We can now use \cite[Lemma $3.1$]{GG2019} to see that for all $\vec{a} \in \mathbb{C}^{|S|}$, we have
\[ T(\vec{a}) \ll_{s} C^{1/2s}(\log |S|)^{1- \delta_{s}/2s} \big(\sum_{\vec{n} \in S} |\frak{a}_{\vec{n}}|^{2s/\delta_{s}}\big)^{\delta_s/2s},\]
which delivers the desired result.
\end{proof}

We see that the conclusions of Theorems $\ref{btdec}$ and $\ref{main}$ combine naturally with Lemma $\ref{bopie}$ and periodicity of integrals to deliver the following discrete restriction estimates for $S_{3,m}$ and $S_{4,m}$.

\begin{Corollary} \label{restrict}
Let $s \geq 2$ be a natural number and let $\frak{a}_{\vec{n}}$ be a complex number for each $\vec{n} \in S_{4,m}$. Then, writing $\delta_{s} = 2s- 2 + 1/6 + (1- c)\cdot 6^{-s+1}$ and $c = 1/461$, we have
\[ \int_{[0,1)^4} \big| \sum_{\vec{n} \in  S_{4,m} } \frak{a}_{\vec{n}} e(\vec{\alpha} \cdot \vec{n}) \big|^{2s} d \vec{\alpha} \ll_{s, \epsilon} m^{\epsilon}  \big(\sum_{\vec{n} \in S_{4,m}} |\frak{a}_{\vec{n}}|^{2s/\delta_{s}}\big)^{\delta_s}. \]
Similarly, let $s \geq 2$ be a natural number and let $\frak{a}_{\vec{n}}$ be a complex number for each $\vec{n} \in S_{3,m}$. Then, writing $\gamma_{s} = 2s -3 + 1/2 + 2^{-1} \cdot 3^{-s+2}$, we have
\[ \int_{[0,1)^3} \big| \sum_{\vec{n} \in  S_{3,m} } \frak{a}_{\vec{n}} e(\vec{\alpha} \cdot \vec{n}) \big|^{2s} d \vec{\alpha} \ll_{s, \epsilon} m^{\epsilon}  \big(\sum_{\vec{n} \in S_{3,m}} |\frak{a}_{\vec{n}}|^{2s/\gamma_{s}}\big)^{\gamma_s}. \]
\end{Corollary}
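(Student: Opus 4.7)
The plan is to deduce the corollary as a direct consequence of our additive energy bounds (Theorems \ref{btdec} and \ref{main}) combined with Lemma \ref{bopie} and the periodicity of the integrand in $\vec{\alpha}$. Hence we focus on the four dimensional case, noting that the three dimensional case proceeds identically with Theorem \ref{main} replacing Theorem \ref{btdec} and the exponent $\gamma_{s}$ replacing $\delta_{s}$.

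First, I would verify that the hypotheses of Lemma \ref{bopie} are met with $S = S_{4,m}$, $d=4$, and $\delta_{s} = 2s - 2 + 1/6 + (1-c)\cdot 6^{-s+1}$. Theorem \ref{btdec} furnishes a constant $C = C_{s,\epsilon}\, m^{\epsilon}$ such that $E_{s,2}(X) \leq C |X|^{\delta_{s}}$ holds uniformly for every subset $X \subseteq S_{4,m}$, so one may appeal to Lemma \ref{bopie} with this value of $C$. A quick check also confirms that $\delta_{s} \in [s, 2s]$ for every $s \geq 2$, so no further adjustment is needed.

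Next, picking any positive integer $R$ that is sufficiently large in terms of $S_{4,m}$, Lemma \ref{bopie} yields
\[ \Big( R^{-4} \int_{[0,R)^4} \big| \sum_{\vec{n} \in  S_{4,m} } \frak{a}_{\vec{n}} e(\vec{\alpha} \cdot \vec{n}) \big|^{2s} d \vec{\alpha} \Big)^{1/2s}  \ll_{s} C^{1/2s} (\log |S_{4,m}|)^{1- \delta_{s}/2s} \Big( \sum_{\vec{n}} |\frak{a}_{\vec{n}}|^{2s/\delta_{s}} \Big)^{\delta_s/2s}. \]
Since $\vec{n} \in \mathbb{Z}^4$, the integrand $|\sum_{\vec{n}} \frak{a}_{\vec{n}} e(\vec{\alpha} \cdot \vec{n})|^{2s}$ is periodic of period $1$ in every coordinate of $\vec{\alpha}$, so a tiling argument shows that for any positive integer $R$ one has $R^{-4} \int_{[0,R)^4} (\cdot) \, d\vec{\alpha} = \int_{[0,1)^4} (\cdot) \, d\vec{\alpha}$. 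Raising the resulting inequality to the $2s$-th power and absorbing the harmless factor $(\log |S_{4,m}|)^{2s- \delta_{s}} \ll_{\epsilon} m^{\epsilon}$ into the existing $m^{\epsilon}$ appearing in $C$ delivers the stated estimate.

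There is no serious obstacle in this argument; the only subtle point is to ensure that the additive energy bounds from Theorems \ref{btdec} and \ref{main} are indeed available uniformly over all subsets of $S_{d,m}$, which is precisely what those theorems assert, and to confirm that the corresponding exponents satisfy the range restriction $\delta_{s}, \gamma_{s} \in [s, 2s]$ required by Lemma \ref{bopie}, which is a routine verification.
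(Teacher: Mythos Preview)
Your proposal is correct and follows exactly the route the paper takes: apply Theorems \ref{btdec} and \ref{main} to supply the energy hypothesis of Lemma \ref{bopie}, then invoke periodicity of the exponential sum to pass from $R^{-d}\int_{[0,R)^d}$ to $\int_{[0,1)^d}$, absorbing the logarithmic loss into the $m^{\epsilon}$. The paper states this in a single line; your write-up simply fills in the routine verifications (the range condition on $\delta_s,\gamma_s$ and the choice of integer $R$) that the paper leaves implicit.
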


Moreover, by replacing Theorem $\ref{main}$ with Theorem $\ref{sdlyf}$, we can also obtain such estimates in the case when $S_{3,m}$ is replaced by arbitrary subsets of the more general surface $\mathcal{S}_{3, \lambda}$. 

\begin{Corollary} \label{restrict2}
Let $\lambda >0$ be real, let $s \geq 2$ be a natural number, let $A$ be a finite, non-empty subset of $\mathcal{S}_{3, \lambda}$, and let $\frak{a}_{\vec{n}}$ be a complex number for each $\vec{n} \in A$. Then for any real number $R>0$ that is large enough in terms of $A$, we have
\[ R^{-3} \int_{[0,R)^3} \big| \sum_{\vec{n} \in  A } \frak{a}_{\vec{n}} e(\vec{\alpha} \cdot \vec{n}) \big|^{2s} d \vec{\alpha}  \ll_{s, \epsilon} |A|^{\epsilon}  \big(\sum_{\vec{n} \in A} |\frak{a}_{\vec{n}}|^{2s/\nu_{s}}\big)^{\nu_s}, \]
 where $\nu_{s} = 2s -2 + 2 \cdot 3^{-s}$.
\end{Corollary}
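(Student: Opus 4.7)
The plan is to derive Corollary \ref{restrict2} as an immediate consequence of Theorem \ref{sdlyf} combined with Lemma \ref{bopie}. The idea is to apply Lemma \ref{bopie} with $S = A$, using Theorem \ref{sdlyf} as the input hypothesis. First I would verify the required condition $E_{s,2}(X) \leq C |X|^{\delta_s}$ for every subset $X$ of $A$, taking $\delta_s = \nu_s$ exactly. Since any such $X$ is itself a finite, non-empty subset of $\mathcal{S}_{3, \lambda}$, Theorem \ref{sdlyf} yields
\[ E_{s,2}(X) \ll_{s, \epsilon} |X|^{\nu_s + \epsilon}, \]
where $\nu_s = 2s - 2 + 2 \cdot 3^{-s}$. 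The key manoeuvre is to write $|X|^{\nu_s + \epsilon} = |X|^{\nu_s} \cdot |X|^{\epsilon} \leq |A|^{\epsilon} |X|^{\nu_s}$, so that the hypothesis of Lemma \ref{bopie} holds with the uniform constant $C = C_{s,\epsilon} |A|^{\epsilon}$. One also needs to confirm that $\nu_s \in [s, 2s]$ for $s \geq 2$, which reduces to the elementary inequality $\nu_s - s = s - 2 + 2 \cdot 3^{-s} \geq 0$.

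Next I would feed these choices into Lemma \ref{bopie} and raise the resulting inequality to the $2s$-th power. This produces, for every $R$ that is sufficiently large in terms of $A$, the bound
\[ R^{-3} \int_{[0,R)^3} \Big| \sum_{\vec{n} \in A} \mathfrak{a}_{\vec{n}} e(\vec{\alpha} \cdot \vec{n}) \Big|^{2s} d\vec{\alpha} \ll_{s, \epsilon} |A|^{\epsilon} (\log |A|)^{2s - \nu_s} \Big( \sum_{\vec{n} \in A} |\mathfrak{a}_{\vec{n}}|^{2s/\nu_s} \Big)^{\nu_s}. \]
Because $2s - \nu_s = 2 - 2 \cdot 3^{-s}$ is bounded in terms of $s$, the logarithmic factor $(\log |A|)^{2s - \nu_s}$ is absorbed into $|A|^{\epsilon}$ after a harmless relabeling of $\epsilon$, and the conclusion of Corollary \ref{restrict2} follows.

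There is no substantive obstacle here: the argument is a routine transfer of the additive energy bound from Theorem \ref{sdlyf} to a discrete restriction estimate via the black-box Lemma \ref{bopie}. The only delicate point worth flagging is the insistence on $\delta_s = \nu_s$ (exactly, with no $\epsilon$ perturbation of the exponent), which is necessary so that the right-hand side of the final estimate carries the exponent $\nu_s$ promised by the statement. This is made possible by the trick of pushing the $|X|^{\epsilon}$ factor from Theorem \ref{sdlyf} into a single constant $C = C_{s,\epsilon} |A|^{\epsilon}$ using $|X| \leq |A|$, at the cost of picking up the $|A|^{\epsilon}$ factor already permitted by the corollary.
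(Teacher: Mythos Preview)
Your proposal is correct and follows exactly the route the paper indicates: the paper states that Corollary~\ref{restrict2} is obtained by replacing Theorem~\ref{main} with Theorem~\ref{sdlyf} in the argument that yields Corollary~\ref{restrict}, namely by feeding the additive energy bound into Lemma~\ref{bopie}. Your handling of the $|X|^{\epsilon}$ factor (absorbing it into $C = C_{s,\epsilon}|A|^{\epsilon}$ via $|X| \leq |A|$) and of the logarithmic loss is precisely the intended mechanism.
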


In fact, Lemma $\ref{bopie}$ implies the standard heuristic that results akin to Conjecture $\ref{dsreq}$ can be used to prove the $d=3$ and $d=4$ cases of Conjecture $\ref{dscres}$, thus providing further evidence towards the connection between additive energies and discrete restriction estimates.
\par

We finally discuss the differences between the case of spheres and paraboloids in $4$ dimensions. Recalling definition $\eqref{pardef}$ of $P_{4,m}$, we see that $|P_{4,m}| \gg m^{3}$, as well as that for every $s \geq 2$, we have
\[ sP_{4,m} \subseteq \mathbb{Z}^4 \cap ([-sm, sm]^3 \times [-3sm^2, 3sm^2]). \]
This implies that $|sP_{4,m}| \ll_{s} m^{5} \ll |P_{4,m}|^{5/3}$, and so, we may apply the Cauchy-Schwarz inequality to get
\[ E_{s,2}(P_{4,m}) \gg_{s} |P_{4,m}|^{2s - 2 + 1/3},\]
whenever $s \geq 2$. Since $\eqref{bdbnd}$ also holds for subsets $A$ of $P_{4,m}$ (see \cite[Remark $3.2$]{BD2015a}), we deduce that the above expression is sharp up to factors of $O_{\epsilon}(m^{\epsilon})$. Thus, the methods involved in proving Theorem $\ref{btdec}$ are able to differentiate between the case of $S_{4,m}$ and $P_{4,m}$.
\par

We further elaborate on this fact by recalling that a crucial ingredient in our proof is Lemma $\ref{hpitw}$, which implies that any three distinct translates of $S_{4,m}$ intersect in at most $O_{\epsilon}(m^{\epsilon})$ elements of $\mathbb{Z}^4$. Moreover, this is precisely what distinguishes the setting of paraboloids and spheres for our purposes, since Lemma $\ref{hpitw}$ does not hold true for $P_{4,m}$. In order to see this, let $P_{1} = P_{4,m} + (u, 0, 0, 3u^2)$ and $P_{2} = P_{4,m} + (0,v,0,3v^2),$ where $u$ and $v$ are natural numbers to be fixed later. We aim to show that for specific choices of $u$ and $v$, we have $|P_{4,m} \cap P_1 \cap P_2| \gg m$. We note that
\[ P_{1} \cap P_{4,m} = \{ \vec{x} \in P_{4,m} \ | \ x_1 = 2u \} \ \text{and} \ P_{2} \cap P_{4,m} = \{ \vec{x} \in P_{4,m} \ | \ x_2 = 2v\}, \]
and so, choosing $u = v =1$, we get
\[ P_{4,m} \cap P_1 \cap P_2 = \{ (2, 2, n, 8 + n^2) \ | \ -m \leq n \leq m \}. \]
This yields the desired conclusion $|P_{4,m} \cap P_1 \cap P_2| \gg m \gg |P_{4,m}|^{1/3}$.

%---------------------------------------------------------------------------------------------------------------------------
%---------------------------------------------------------------------------------------------------------------------------
%---------------------------------------------------------------------------------------------------------------------------
%---------------------------------------------------------------------------------------------------------------------------
%---------------------------------------------------------------------------------------------------------------------------
%---------------------------------------------------------------------------------------------------------------------------

\section{Preliminaries}

We begin by recording some convenient notation. For every real number $p >0$, finite set $Z$ and function $w : \mathbb{Z} \to \mathbb{R}$, we write
\[ \n{w}_{p} = (\sum_{z \in Z} |w(z)|^{p})^{1/p} \ \text{and} \ \n{w}_{\infty} = \sup_{z \in Z} |w(z)|. \]
Similarly, for any vector $\vec{z} \in \mathbb{Z}^d$, we define $\n{\vec{z}}_{p} = (|z_1|^p + \dots + |z_d|^p)^{1/p}$ and $\n{\vec{z}}_{\infty} = \sup_{1 \leq i \leq d} |z_i|$.
Finally, we will use $\mathscr{C}_{d, k}$ to denote the set of hyperplanes $H$ in $\mathbb{R}^d$ such that $H$ is of the form $a_1 x_1 + \dots + a_d x_d = a_{d+1}$ with $a_1, \dots, a_{d+1}$ lying in the set $[-k,k] \cap \mathbb{Z}$. With this notation in hand, we commence by analysing the distribution of lattice points over intersections of hyperplanes from $\mathscr{C}_{d, k}$ and the sphere $\mathcal{S}_{d,m}$. 

\begin{lemma} \label{jarnik}
Let $c>0$ be a real number. Then for every $H \in \mathscr{C}_{3,m^{c}}$, we have
\[  |H \cap S_{3,m}| \ll_{c,\epsilon} m^{\epsilon}. \]
Similarly, let $H_1$ and $H_2$ be two distinct hyperplanes in $\mathscr{C}_{4,m^{c}}$. Then, we have
\[ |H_1 \cap H_2 \cap S_{4,m}| \ll_{c, \epsilon} m^{\epsilon}. \]
\end{lemma}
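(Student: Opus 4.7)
The plan is to reduce both parts of Lemma~\ref{jarnik} to the classical divisor-type bound which states that the number of integer solutions $(x,y)\in\mathbb{Z}^2$ to a positive-definite binary integer quadratic equation with coefficients of size at most $H$ is $O_\epsilon(H^\epsilon)$. This follows by completing the square twice to put the equation into the form $U^2 + D V^2 = N$ with $D > 0$ and $|N|, D \ll H^{O(1)}$, and then bounding the number of representations of $N$ in the associated imaginary quadratic order by the divisor function. Both parts will produce such a quadratic equation in which every coefficient, along with the right-hand side, is an integer of size $m^{O_c(1)}$, and the desired bound follows after rescaling $\epsilon$.

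For the first part, let $H: a_1 x_1 + a_2 x_2 + a_3 x_3 = a_4$ be a hyperplane in $\mathscr{C}_{3, m^c}$. Since at least one of $a_1,a_2,a_3$ must be nonzero, I permute coordinates to assume $a_3 \neq 0$. Multiplying $x_1^2 + x_2^2 + x_3^2 = m$ by $a_3^2$ and substituting $a_3 x_3 = a_4 - a_1 x_1 - a_2 x_2$ yields an integer quadratic equation in $(x_1, x_2)$ whose quadratic part has coefficient matrix $\begin{pmatrix} a_1^2 + a_3^2 & a_1 a_2 \\ a_1 a_2 & a_2^2 + a_3^2 \end{pmatrix}$, with determinant $a_3^2(a_1^2 + a_2^2 + a_3^2) > 0$, so the quadratic part is positive definite. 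All coefficients and the right-hand side are integers of size $m^{O_c(1)}$, so the divisor bound gives $O_{c,\epsilon}(m^\epsilon)$ integer pairs $(x_1, x_2)$, each determining at most one integer $x_3$.

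For the second part, the two distinct hyperplanes $H_1, H_2 \in \mathscr{C}_{4, m^c}$ are either parallel, in which case $H_1 \cap H_2 = \emptyset$ and the bound is trivial, or have linearly independent normal vectors. In the latter case, after permuting coordinates I may assume that the $2 \times 2$ minor of the normal coefficients indexed by the columns $(x_3, x_4)$ is invertible. By Cramer's rule, $x_3$ and $x_4$ are rational affine-linear functions of $(x_1, x_2)$ with a common integer denominator $\Delta$ satisfying $1 \le |\Delta| \ll m^{2c}$. Substituting into $x_1^2 + x_2^2 + x_3^2 + x_4^2 = m$ and multiplying through by $\Delta^2$ produces an integer quadratic equation in $(x_1, x_2)$ whose quadratic part has the form $\Delta^2(x_1^2 + x_2^2) + (\text{integer linear form})^2 + (\text{integer linear form})^2$, which is manifestly positive definite with all coefficients of size $m^{O_c(1)}$. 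Once again the divisor bound delivers $O_{c,\epsilon}(m^\epsilon)$ integer points.

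I do not foresee a serious obstacle. The only delicate point is the uniformity of the divisor bound across all ellipses with integer coefficients of bounded size, but this is classical, and the linear dependence of the resulting exponent on $c$ is absorbed by rescaling $\epsilon$. Degenerate special cases, such as the resulting equation having no integer solutions or being satisfied by a single point, are trivial and do not affect the estimate.
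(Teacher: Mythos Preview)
Your proposal is correct and follows essentially the same route as the paper's proof: eliminate variables using the hyperplane equations to reduce to a positive-definite binary integer quadratic in two of the coordinates with coefficients of size $m^{O_c(1)}$, and then invoke the classical $O_\epsilon(H^\epsilon)$ bound for lattice points on such an ellipse (the paper cites \cite[Lemma~3.2]{She2016} for this last step). Your treatment is in fact somewhat more explicit than the paper's---you compute the discriminant in the $d=3$ case and spell out the Cramer's rule substitution and the resulting sum-of-squares structure in the $d=4$ case---but the underlying argument is the same.
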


\begin{proof}
Without loss of generality, we may assume that whenever $(x,y,z) \in H \cap S_{3,m}$, then the point $(x,y)$ lies on the ellipsoid
\begin{equation} \label{yui21}
 a_1 x^2 + a_2 xy + a_3 y^2 + a_4 x + a_5 y + a_6 = 0, 
 \end{equation}
for an appropriate choice  of $a_1, \dots, a_6 \in \mathbb{Z}$ satisfying $|a_1|, \dots, |a_6| \ll m^{O_{c}(1)}$. We may then follow \cite[Proof of Lemma $3.2$]{She2016} mutatis mutandis to see that this curve contains at most $e^{O_{c}(\log m/\log \log m)} \ll_{c,\epsilon}m^{\epsilon}$ elements of $\mathbb{Z}^2$, whence, we are done. In the four dimensional case, we may assume that $H_1$ and $H_2$ are not translates of each other, since otherwise we would have $H_1 \cap H_2 = \emptyset$. This implies that $H_1$ and $H_2$ intersect in a two dimensional plane, which further intersects $S_{4,m}$ in a circle. Moreover, we see that every lattice point on this circle corresponds to a distinct lattice point on an ellipsoid of the form $\eqref{yui21}$, for some suitable choice of $a_1, \dots, a_6 \in \mathbb{Z}$ with $|a_1|, \dots, |a_6| \ll m^{O_{c}(1)}$. We may now proceed as in the three dimensional setting to obtain the desired bound.
\end{proof}

We also present some preliminary definitions from incidence geometry. Thus, given a finite set $P$ of points and a finite collection $V$ of varieties of bounded degree in $\mathbb{R}^d$, we write that the incidence graph of $P\times V$ is $K_{s,t}$-free to mean that any $s$ distinct points in $P$ can simultaneously lie on at most $t-1$ distinct varieties in $V$. In particular, we will be interested in studying the number of incidences between $P$ and $V$, and we will use $I(P,V)$ to denote this quantity. Thus, 
\[ I(P,V) = \sum_{p \in P} \sum_{v \in V} \mathds{1}_{p \in v} = |\{ (p,v) \in P \times V \ | \ p \in v \} |. \]
Moreover, we will also study weighted incidences between $P$ and $V$, and so, given functions $w:P \to \mathbb{N}$ and $w': V \to \mathbb{N}$, we write
\[ I_{w,w'}(P,V) = \sum_{p \in P} \sum_{v \in V} w(p) w'(v) \mathds{1}_{p \in v}  .\]
We now record the required incidence theorem from \cite{FPS2017} as described in \cite[Theorem B.1]{BM2019}. 

\begin{lemma} \label{st1}
Let $P$ be a finite set of points in $\mathbb{R}^d$ and let $V$ be a finite collection of varieties in $\mathbb{R}^d$ of degree at most $k$, and let $\epsilon >0$ be a real number. Assuming the incidence graph of $P\times V$ is $K_{s,t}$-free, we have
\[ I(P,V) \ll_{k,s,d,\epsilon} s t (|P|^{\frac{(d-1)s}{ds-1} + \epsilon} |V|^{\frac{d(s-1)}{ds-1}}  + |P| + |V|). \]
\end{lemma}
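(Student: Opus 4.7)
The plan is to adapt the polynomial partitioning framework of Guth and Solymosi--Tao, as carried out by Fox--Pach--Sheffer for varieties of bounded degree. Fix a degree parameter $D$ to be optimized at the end, and apply the polynomial partitioning theorem to produce a polynomial $f\in\mathbb{R}[x_1,\dots,x_d]$ of degree at most $D$ whose complement $\mathbb{R}^d\setminus Z(f)$ decomposes into $O(D^d)$ open cells, each meeting $P$ in at most $O(|P|/D^d)$ points. Split
\[
I(P,V) = I(P\cap Z(f),V) + \sum_{\Omega} I(P\cap\Omega, V_{\Omega}),
\]
where $V_\Omega$ consists of those varieties in $V$ meeting the cell $\Omega$. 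This separates the count into a hypersurface part and a cellular part.

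For the cellular part, the key geometric input is a standard B\'ezout-type bound showing that a fixed variety of degree at most $k$ crosses at most $O_{k,d}(D^{d-1})$ cells, whence $\sum_\Omega |V_\Omega|\ll D^{d-1}|V|$. Each pair $(P\cap\Omega, V_\Omega)$ inherits the $K_{s,t}$-free hypothesis, so one can induct on $|P|$ and sum the inductive bound over cells, using H\"older to handle the varying sizes $|V_\Omega|$. The $|P|^\epsilon$ loss in the exponent absorbs the $O(\log|P|)$ levels of recursion. For the hypersurface part, split $V = V_1 \sqcup V_2$, where $V_1$ consists of those varieties fully contained in $Z(f)$. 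Every $v\in V_2$ meets $Z(f)$ in a variety of strictly smaller dimension and degree $O_k(D)$, so those incidences reduce to a $(d-1)$-dimensional instance handled by induction on $d$; the base case $d=1$ is just K\H{o}v\'ari--S\'os--Tur\'an applied to the $K_{s,t}$-free bipartite incidence graph. For $v\in V_1$, a Milnor--Thom-type argument bounds $|V_1|\ll_{k,d} D^{d-1}$, and then K\H{o}v\'ari--S\'os--Tur\'an together with the $K_{s,t}$-free assumption controls the remaining incidences.

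The main obstacle is closing the induction cleanly on both parameters $|P|$ and $d$ without losing more than $|P|^\epsilon$ in the exponent. The optimization of $D$ must balance the cellular contribution, roughly $D^d\cdot(|P|/D^d)^{(d-1)s/(ds-1)}\cdot(D^{d-1}|V|/D^d)^{d(s-1)/(ds-1)}$, against the hypersurface contribution, which is what produces the precise exponents $(d-1)s/(ds-1)$ and $d(s-1)/(ds-1)$ appearing in the statement. The degenerate regime $D^d\geq |P|$, where partitioning becomes useless, must be handled separately by defaulting to K\H{o}v\'ari--S\'os--Tur\'an; this is exactly what contributes the additive $O(|P|+|V|)$ terms in the final bound.
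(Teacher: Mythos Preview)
The paper does not prove this lemma at all; it simply records it as a quotation of \cite{FPS2017} (in the form given in \cite[Theorem~B.1]{BM2019}), so there is no proof in the paper to compare against. Your proposal instead sketches the actual polynomial partitioning argument carried out in that reference, and your outline is broadly faithful to what Fox--Pach--Sheffer--Suk--Zahl do.

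One minor imprecision worth flagging: the bound $|V_1|\ll_{k,d} D^{d-1}$ on the number of varieties fully contained in $Z(f)$ is not a Milnor--Thom statement, and in fact there is no such bound in general (one can take $V$ to consist of many copies of a single variety lying inside $Z(f)$, or many subvarieties of a single irreducible component). In the hypersurface case, which is the only case the present paper ever uses, the correct observation is that any irreducible hypersurface contained in $Z(f)$ is an irreducible factor of $f$, so $|V_1|\le D$. In the general semi-algebraic setting of \cite{FPS2017}, the points on $Z(f)$ are instead handled by a generic projection to a hyperplane and induction on the ambient dimension, rather than by bounding $|V_1|$ directly. This is a technical refinement rather than a genuine gap in your overall strategy.
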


It is worth noting that in all of the cases where we intend to apply Lemma $\ref{st1}$, we will have $t \ll_{\epsilon} m^{\epsilon}$ and $|P| \leq m^{O(1)}$, and so, $t|P|^{\epsilon} \ll_{\epsilon} m^{\epsilon}$. Thus, we will often coalesce the $t|P|^{\epsilon}$ factor in the above incidence bound into the $O_{\epsilon}(m^{\epsilon})$ notation. In fact, because of these properties, it would also have sufficed to use \cite[Theorem 12.4]{Shef}, which is another quantitative version of Lemma $\ref{st1}$. Furthermore, since we will often be working with weighted incidence sums, we record the following lemma that applies a dyadic decomposition type argument to convert incidence bounds, such as the one above, into weighted incidence estimates.

\begin{lemma} \label{6on}
Let $d$ be a natural number, let $P$ be a finite, non-empty set of points in $\mathbb{R}^d$ and let $L$ be a finite, non-empty set of varieties. Suppose there exist real numbers $\mathcal{C}>0$ and $a, b \in (1/2,1)$ such that
\begin{equation} \label{6onh}
I(P',L') \leq \mathcal{C} ( |P'|^{a} |L'|^{b} + |P'| + |L'|),
\end{equation}
holds for each non-empty $P' \subseteq P$ and $L' \subseteq L'$. Then, for every function $w: P \to \mathbb{N}$ and $w': L \to \mathbb{N}$, we have
\[ I_{w,w'}(P,L) \ll  \mathcal{C}( \n{w}_{2}^{2-2a} \n{w}_1^{2a-1} \n{w'}_{2}^{2-2b} \n{w'}_{1}^{2b-1} + \n{w}_{\infty} \n{w'}_{1} + \n{w}_1 \n{w'}_{\infty}). \]
\end{lemma}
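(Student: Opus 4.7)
The plan is to apply a standard dyadic decomposition to both weight functions and then invoke the unweighted hypothesis level by level; the delicate point is summing the resulting dyadic contributions without a spurious logarithmic loss in the number of levels.

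For each integer $j \geq 0$, I would define the super-level sets $A_j = \{p \in P : w(p) \geq 2^j\}$ and $A'_{j'} = \{\ell \in L : w'(\ell) \geq 2^{j'}\}$. Using the identity $w(p) \asymp \sum_{j \geq 0} 2^j \mathds{1}_{A_j}(p)$ (valid since $w$ takes positive integer values) together with its analogue for $w'$, one obtains
\[ I_{w,w'}(P,L) \asymp \sum_{j, j' \geq 0} 2^{j+j'} I(A_j, A'_{j'}), \]
and for every pair $(j, j')$ with both $A_j$ and $A'_{j'}$ non-empty, hypothesis~\eqref{6onh} gives
\[ I(A_j, A'_{j'}) \leq \mathcal{C}\bigl(|A_j|^a |A'_{j'}|^b + |A_j| + |A'_{j'}|\bigr). \]
The two linear pieces factor immediately: using the Fubini identity $\sum_j 2^j |A_j| \asymp \n{w}_1$ and the geometric series bound $\sum_{j'} 2^{j'} \ll \n{w'}_{\infty}$ (truncated at $j' \leq \log_2 \n{w'}_{\infty}$), these yield exactly the additive contributions $\n{w}_1 \n{w'}_{\infty} + \n{w}_{\infty} \n{w'}_1$ appearing in the conclusion.

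The main product term $\sum_{j,j'} 2^{j+j'} |A_j|^a |A'_{j'}|^b$ factors into the product of two single-index sums, so by symmetry it is enough to prove
\[ \sum_j 2^j |A_j|^a \ll \n{w}_2^{2-2a}\, \n{w}_1^{2a-1}, \]
and the analogous bound for the primed sum (using $b \in (1/2, 1)$) then follows identically. The structural input is that $|A_j|$ is non-increasing in $j$. Comparing $|A_j| \cdot 2^j$ and $|A_j| \cdot 2^{2j}$ to the partial sums $\sum_{j'' \leq j} 2^{j''} |A_{j''}| \ll \n{w}_1$ and $\sum_{j'' \leq j} 2^{2j''} |A_{j''}| \ll \n{w}_2^2$ respectively yields the pointwise bounds $|A_j| \ll \n{w}_1 \cdot 2^{-j}$ and $|A_j| \ll \n{w}_2^2 \cdot 2^{-2j}$. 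I would then split the sum at the threshold $J_0$ defined by $2^{J_0} \asymp \n{w}_2^2 / \n{w}_1$: for $j \leq J_0$ use the first pointwise bound, noting that $1 - a > 0$ so the geometric series is dominated by its top term and gives $\ll \n{w}_1^a \cdot 2^{J_0(1-a)}$; for $j > J_0$ use the second pointwise bound, noting that $1 - 2a < 0$ so the geometric series is dominated by its first term and gives $\ll \n{w}_2^{2a} \cdot 2^{J_0(1-2a)}$. Substituting the choice of $J_0$ into either expression recovers $\n{w}_2^{2-2a} \n{w}_1^{2a-1}$ exactly.

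The main obstacle is this last step. A direct application of Hölder's inequality to $\sum_j 2^j |A_j|^a$ would introduce a factor proportional to $(\log \n{w}_{\infty})^{1-a}$, which the stated conclusion does not tolerate. The split-at-optimal-threshold argument above, which leverages monotonicity of $|A_j|$ together with the two separate pointwise bounds, is precisely what removes this loss; and it is the hypothesis $a, b \in (1/2, 1)$ that guarantees both of the resulting geometric sums are dominated by a single extremal term rather than accumulating across all dyadic levels.
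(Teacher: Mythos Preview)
Your proof is correct and follows essentially the same strategy as the paper's: both arguments dyadically decompose in the weights, handle the linear terms by the trivial geometric series, and then control the key single-index sum by splitting at the threshold $2^{J_0}\asymp \n{w}_2^2/\n{w}_1$, using the $\ell^1$ Markov bound below and the $\ell^2$ Chebyshev bound above so that the two geometric series converge precisely because $a\in(1/2,1)$. The only cosmetic difference is that the paper decomposes into dyadic shells $P_j=\{2^j\le w<2^{j+1}\}$ whereas you work with super-level sets $A_j=\{w\ge 2^j\}$; the pointwise bounds $|P_j|2^j\le\n{w}_1$, $|P_j|2^{2j}\le\n{w}_2^2$ (resp.\ $|A_j|2^j\ll\n{w}_1$, $|A_j|2^{2j}\ll\n{w}_2^2$) and the subsequent threshold computation are identical in either formulation.
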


\begin{proof}
Our argument proceeds via a dyadic decomposition argument. Thus, let $J$ be the largest natural number such that $2^{J} \leq \n{P}_{\infty}$, and similarly, let $K$ be the largest natural number such that $2^K \leq  \n{L}_{\infty}$. Furthermore, for each $1 \leq j \leq J$  and for each $1 \leq k \leq K$, we write
\[ P_{j} = \{ p \in P \ | \ 2^{j} \leq w(p) < 2^{j+1} \}, \ \text{and} \ L_{k} = \{ l \in L \ | \ 2^{k} \leq w'(l) < 2^{k+1} \}. \]
\par

We see that
\[  \sum_{p \in P} \sum_{l \in L} \mathds{1}_{p \in l} w(p)w'(l)  
= \sum_{j=0}^{J} \sum_{k=0}^{K} \sum_{p \in P_{j}}\sum_{l \in L_{k}}\mathds{1}_{p \in l} w(p)w'(l)  
\ll \sum_{j=0}^{J} \sum_{k=0}^{K} 2^{j}2^{k} \sum_{p \in P_{j}}\sum_{l \in L_{k}}\mathds{1}_{p \in l}  . \]
Applying $\eqref{6onh}$ with $P' = P_{j}$ and $L'= L_{k}$, we find that
\begin{equation} \label{onwith}
  \sum_{p \in P} \sum_{l \in L} \mathds{1}_{p \in l} w(p)w'(l) 
\ll C  \sum_{j=0}^{J} \sum_{k=0}^{K} 2^{j}2^{k} (|P_{j}|^{a}|L_{k}|^{b} + |P_j| + |L_k|)  .
  \end{equation}
\par

We note that
 \[ \sum_{j=0}^{J} \sum_{k=0}^{K} 2^{j}2^{k} |P_{j}| = \big(\sum_{j=0}^{J} 2^{j}|P_{j}| \big) \big(\sum_{k=0}^{K} 2^{k}\big) \ll \n{w}_1 2^{K} \ll \n{w}_1 \n{w'}_{\infty}, \]
and
 \[\sum_{j=0}^{J} \sum_{k=0}^{K} 2^{j}2^{k} |L_{k}| = \big(\sum_{j=0}^{J} 2^{j}\big) \big(\sum_{k=0}^{K} 2^{k} |L_{k}| \big)  \ll 2^{J} \n{w'}_{1} \ll \n{w}_{\infty} \n{w'}_{1}. \]
Furthermore, we observe that
\[ \sum_{j=0}^{J} \sum_{k=0}^{K} 2^{j}2^{k} |P_j|^{a}|L_{k}|^{b} = \big(  \sum_{j=0}^{J}2^j|P_j|^{a}  \big) \big( \sum_{k=0}^{K} 2^k |L_k|^{b}     \big),\]
and so, noting $\eqref{onwith}$, it suffices to show that
\begin{equation} \label{app24}
\sum_{j=0}^{J}2^j|P_j|^{a}   \ll \n{w}_1^{2a-1} \n{w}_2^{2-2a}  \ \text{and} \ \sum_{k=0}^{K} 2^k |L_k|^{b}   \ll  \n{w'}_1^{2b-1} \n{w'}_2^{2-2b}.
\end{equation}
\par

Setting $X =  \n{w}_2^2 \n{w}_1^{-1}$, we write $U = \{ 0 \leq j \leq J \ | \ 2^{j} \leq X\}$ and $V = \{ 0,1,\dots, J\} \setminus U$. We note that
\[ \sum_{j \in U} 2^j|P_j|^{a}  = \sum_{j \in U}( |P_{j}|2^{j})^{a} 2^{j(1-a)} \leq \n{w}_1^{a} \sum_{j \in U} 2^{j(1-a)} , \]
and since $a <1$, we have $\sum_{j \in U} 2^{j(1-a)} \ll X^{1-a},$ whence, 
\begin{equation} \label{6hon}
  \sum_{j \in U} 2^j|P_j|^{a} \ll \n{w}_1^{a} X^{1-a}  = \n{w}_1^{2a-1} \n{w}_2^{2-2a} . 
\end{equation}
\par
  
Next, we consider the case when $j \in V$, and so, we have
\[ \sum_{j \in V} 2^{j} |P_{j}|^{a}  = \sum_{j \in V} (|P_j|^{a} 2^{2aj}) 2^{-j(2a-1)} \leq \n{w}_2^{2a} \sum_{j \in V} 2^{-j(2a-1)}. \]
Since $a > 1/2$, we observe that $\sum_{j \in V} 2^{-j(2a-1)} \ll X^{-(2a-1)},$ whereupon, the preceding inequality gives us
\[ \sum_{j \in V} 2^{j} |P_{j}|^{a} 
 \ll \n{w}_2^{2a} X^{-(2a-1)}=  \n{w}_1^{2a-1} \n{w}_2^{2-2a}. \]
Combining this with $\eqref{6hon}$, we find that
\[ \sum_{j=0}^{J}2^j|P_j|^{a}   \ll   \n{w}_1^{2a-1} \n{w}_2^{2-2a} , \]
which confirms the first inequality in $\eqref{app24}$. We can prove the second inequality in $\eqref{app24}$ mutatis mutandis, and so, we conclude the proof of Lemma $\ref{6on}$.
\end{proof}

Since our aim will be to study additive energies for sets on the sphere, it is natural to analyse, given a vector $\vec{n} \in \mathbb{Z}^d$, the set $C_{\vec{n}} = S_{d, m} \cap (\vec{n} - S_{d, m})$. We note that when $\vec{n} \neq 0$, the set $C_{\vec{n}}$ is contained in a unique hyperplane $H_{\vec{n}}$ which does not contain the origin. In order to see this, observe that $H_{\vec{n}}$ is orthogonal to the vector $\vec{n}$ and contains the vector $\vec{n}/2$, and so, $H_{\vec{n}}$ is defined by the equation $\vec{x} \cdot \vec{n} = 2^{-1} \n{\vec{n}}_2^2$.

\begin{lemma} \label{hpitw}
Let $d \in \{3,4\}$ be a natural number, let $c>0$ be a real number and let $\vec{a}_1, \dots, \vec{a}_{d-1} \in \mathbb{Z}^d$ be distinct vectors satisfying $\n{\vec{a}_i}_{\infty} \leq m^{c}$ for $1 \leq i \leq d-1$. Then
\[ |(\vec{a}_1 + S_{d,m}) \cap \dots \cap (\vec{a}_{d-1} + S_{d,m})| \ll_{c,\epsilon} m^{\epsilon}. \]
\end{lemma}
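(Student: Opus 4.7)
The plan is to reduce every such intersection of translates of $S_{d,m}$ to an intersection of $S_{d,m}$ itself with a small family of integer-coefficient hyperplanes of controlled height, and then directly invoke Lemma \ref{jarnik}.

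First, I would translate the entire intersection by the lattice vector $-\vec{a}_1$. This preserves the number of integer points, and transforms the set into
\[
S_{d,m} \cap (\vec{b}_2 + S_{d,m}) \cap \dots \cap (\vec{b}_{d-1} + S_{d,m}),
\]
where $\vec{b}_i = \vec{a}_i - \vec{a}_1$ is a nonzero lattice vector with $\|\vec{b}_i\|_\infty \leq 2m^c$. For each $\vec{x}$ in this intersection, I would subtract the equation $\|\vec{x}\|_2^2 = m$ defining $S_{d,m}$ from $\|\vec{x} - \vec{b}_i\|_2^2 = m$ to obtain the linear constraint $2\vec{b}_i \cdot \vec{x} = \|\vec{b}_i\|_2^2$. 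This defines a hyperplane $H_i \in \mathscr{C}_{d,k}$ with $k \ll m^{2c}$, and $\vec{x}$ must then lie in $S_{d,m} \cap H_2 \cap \dots \cap H_{d-1}$.

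When $d = 3$ this is simply $S_{3,m} \cap H_2$, and the first part of Lemma \ref{jarnik} immediately delivers the bound $O_{c,\epsilon}(m^\epsilon)$. When $d = 4$ it becomes $S_{4,m} \cap H_2 \cap H_3$, and the second half of Lemma \ref{jarnik} yields the same bound, provided $H_2 \neq H_3$.

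The hard part, and essentially the only real work, will be checking this distinctness in the $d=4$ case. The normals of $H_2$ and $H_3$ are the nonzero vectors $\vec{b}_2$ and $\vec{b}_3$. If they are linearly independent then the hyperplanes are transverse and we are done. Otherwise $\vec{b}_3 = \lambda \vec{b}_2$ for some scalar $\lambda \neq 0$; the defining equation of $H_3$ then rewrites as $2 \vec{b}_2 \cdot \vec{x} = \lambda \|\vec{b}_2\|_2^2$, which coincides with that of $H_2$ exactly when $\lambda = 1$, that is when $\vec{b}_3 = \vec{b}_2$, contradicting $\vec{a}_2 \neq \vec{a}_3$. For any other $\lambda$ the hyperplanes $H_2$ and $H_3$ are parallel and distinct, so already $H_2 \cap H_3 = \emptyset$ in $\mathbb{R}^4$ and the bound is trivial. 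Hence Lemma \ref{jarnik} applies in every remaining case, completing the argument.
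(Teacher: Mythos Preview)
Your argument is correct and essentially coincides with the paper's own proof. Both translate to place one copy of $S_{d,m}$ at the origin, pass to the hyperplanes $2\vec{b}_i\cdot\vec{x}=\|\vec{b}_i\|_2^2$ containing each pairwise intersection, and then invoke Lemma~\ref{jarnik}; the distinctness check for $d=4$ is also handled the same way (the paper phrases it as forcing $\lambda=\lambda^2$, you as observing that $\lambda\neq 1$ gives parallel distinct hyperplanes, which is the same computation).
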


\begin{proof}
Since our intersection of interest is translation invariant, we may assume that $\vec{a}_{d-1} = 0$ and that $\vec{a}_1, \dots, \vec{a}_{d-2}$ are distinct non-zero vectors. When $d=3$, we need to show that whenever $\vec{a} \neq 0$, the set $S_{3,m} \cap (S_{3,m} + \vec{a})$ has $O_{\epsilon}(m^{\epsilon})$ elements.  In particular, note that $S_{3,m} \cap (S_{3,m} + \vec{a}) \subseteq S_{3,m} \cap H_{\vec{a}}$, whence, we use Lemma $\ref{jarnik}$ to see that $|S_{3,m} \cap H_{\vec{a}}| \ll_{c,\epsilon} m^{\epsilon}$ and thus, we obtain the desired conclusion. 
\par

When $d=4$, we study $|S_{4,m} \cap (S_{4,m} + \vec{a}) \cap (S_{4,m} + \vec{b})|$ for distinct non-zero vectors $\vec{a}, \vec{b}$. As in the preceding paragraph, we note that
\[ S_{4,m} \cap (S_{4,m} + \vec{a}) \cap (S_{4,m} + \vec{b}) \subseteq S_{4,m} \cap H_{\vec{a}} \cap H_{\vec{b}}. \]
If $\vec{b} = \lambda \vec{a}$ for some $\lambda \in \mathbb{R} \setminus \{0\}$, then any $\vec{u} \in H_{\vec{a}} \cap H_{\vec{b}}$ satisfies
\[ \vec{a} \cdot \vec{u}  = 2^{-1} \n{ \vec{a}}_2^2 \ \text{and} \  \lambda \vec{a} \cdot \vec{u} =2^{-1}  \lambda^2 \n{ \vec{a}}_2^2. \]
This implies that $\lambda  = \lambda^2$, whence, $\lambda \in \{0,1\}$, but either choice contradicts the fact that $\vec{a}, \vec{b}$ were distinct non-zero vectors. We may now assume that $\vec{b} \neq \lambda \vec{a}$ for any $\lambda \in \mathbb{R}\setminus \{0\}$, which confirms that $H_{\vec{a}}$ and $H_{\vec{b}}$ are distinct hyperplanes from $\mathscr{C}_{d, 2m^{2c}}$. We obtain the required bound by using Lemma $\ref{jarnik}$ to deduce that $|S_{4,m} \cap H_{\vec{a}} \cap H_{\vec{b}}| \ll_{c,\epsilon} m^{\epsilon}$. 
\end{proof}

This lemma allows us to obtain some control over how many translates of a sphere can contain a fixed set of lattice points.

\begin{Corollary} \label{mnf2}
Let $d \in \{3,4\}$, let $c>0$, let $P,X$ be arbitrary, finite sets of points in $\mathbb{Z}^d$ such that every $\vec{p} \in P$ satisfies $\n{\vec{p}}_{\infty} \leq m^{c}$, and let $V$ be
\[ V = \{ \mathcal{S}_{d, m} + \vec{x} \ | \ \vec{x} \in X\}. \]
Then the incidence graph of $P \times V$ is $K_{d-1, t}$-free for some $t \ll_{c,\epsilon} m^{\epsilon}$. 
\end{Corollary}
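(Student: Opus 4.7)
The plan is to reduce the claim directly to Lemma \ref{hpitw}. Fix any collection of $d-1$ distinct points $\vec{p}_1, \dots, \vec{p}_{d-1} \in P$, and count the number of $\vec{x} \in X$ such that all of $\vec{p}_1, \dots, \vec{p}_{d-1}$ lie on the sphere $\mathcal{S}_{d,m} + \vec{x}$. Since a sphere $\mathcal{S}_{d,m} + \vec{x}$ has a unique center, two distinct choices of $\vec{x} \in X$ produce two distinct elements of $V$, so it suffices to bound the number of admissible centers.

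The incidence condition $\vec{p}_i \in \mathcal{S}_{d,m} + \vec{x}$ is equivalent to $\vec{p}_i - \vec{x} \in \mathcal{S}_{d,m}$, and using the symmetry $-\mathcal{S}_{d,m} = \mathcal{S}_{d,m}$, to $\vec{x} \in \vec{p}_i + \mathcal{S}_{d,m}$. Consequently, the set of valid centers satisfies
\[
\{\vec{x} \in X : \vec{p}_i \in \mathcal{S}_{d,m} + \vec{x} \text{ for all } 1 \leq i \leq d-1\} \subseteq \bigcap_{i=1}^{d-1}(\vec{p}_i + \mathcal{S}_{d,m}) \cap \mathbb{Z}^d = \bigcap_{i=1}^{d-1}(\vec{p}_i + S_{d,m}),
\]
where the last equality uses $\vec{p}_i \in \mathbb{Z}^d$.

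The vectors $\vec{p}_1, \dots, \vec{p}_{d-1}$ are distinct and all satisfy $\n{\vec{p}_i}_{\infty} \leq m^{c}$, so Lemma \ref{hpitw} applies verbatim (with $\vec{a}_i = \vec{p}_i$) to give
\[
\Big| \bigcap_{i=1}^{d-1}(\vec{p}_i + S_{d,m}) \Big| \ll_{c,\epsilon} m^{\epsilon}.
\]
Taking $t$ to be one more than the implicit constant times $m^{\epsilon}$ produces the desired $K_{d-1,t}$-freeness. The argument involves no real obstacle once the incidence condition has been rewritten; the content is entirely absorbed into Lemma \ref{hpitw}, which is the genuinely non-trivial input and was established using the lattice-point count of Lemma \ref{jarnik} on plane sections of $\mathcal{S}_{d,m}$.
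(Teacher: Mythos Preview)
Your proof is correct and follows essentially the same approach as the paper: fix $d-1$ distinct points of $P$, rewrite the incidence condition as $\vec{x} \in \bigcap_{i=1}^{d-1}(\vec{p}_i + S_{d,m})$, and invoke Lemma~\ref{hpitw} to bound the number of admissible centers by $O_{c,\epsilon}(m^{\epsilon})$. The extra care you take in noting that distinct centers yield distinct spheres and in spelling out the symmetry $-\mathcal{S}_{d,m}=\mathcal{S}_{d,m}$ is welcome but not a departure from the paper's argument.
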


\begin{proof}
Let $\vec{a}_1, \dots, \vec{a}_{d-1}$ be fixed points in $P$ and let $\vec{u} \in X$ satisfy $\vec{a}_1, \dots, \vec{a}_{d-1} \in \mathcal{S}_{d,m} + \vec{u}$. Since $\vec{a}_1 ,\dots, \vec{a}_{d-1}, \vec{u} \in \mathbb{Z}^d$, we have $\vec{a}_1, \dots, \vec{a}_{d-1} \in S_{d,m} + \vec{u}$. This implies that $\vec{u} \in \vec{a}_{i} + S_{d,m}$ for each $i = 1, \dots, d-1$, whereupon, we may apply Lemma $\ref{hpitw}$ to deduce that there are $O_{c,\epsilon}(m^{\epsilon})$ valid choices for $\vec{u}$. 
\end{proof}

As we previously mentioned, our strategy will involve applying higher energy variants of the Balog--Szemer\'edi--Gowers type theorem. In particular, we will be using the following result of Shkredov \cite[Theorem $1.3$]{Sh2013}. 

\begin{lemma} \label{shkbs}
Let $A$ be a finite,non-empty subset of an abelian group, let $K,M$ satisfy \[ E_{2,2}(A) = |A|^3/K \ \text{and} \ E_{2,3}(A) = M|A|^{4}/K^2. \]
Then there exists a set $A' \subseteq A$ such that
\begin{equation} \label{ffcl}
 |A'| \gg M^{-10} (\log M)^{-15} |A| \ \text{and} \ |2A'-A'| \ll M^{162} (\log M)^{252}  K |A'|. 
 \end{equation}
\end{lemma}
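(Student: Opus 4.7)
The plan is to prove this higher-energy analogue of the Balog--Szemer\'edi--Gowers theorem along the lines of Shkredov's approach, which combines a dyadic localization of popular differences with a BSG-style graph-theoretic extraction, enhanced by the third moment $E_{2,3}(A)$ to control the triple sumset $2A'-A'$ rather than just $A'+A'$ or $A'-A'$. Throughout, for $x$ in the ambient group, write $r_{A-A}(x) = |\{(a,b) \in A \times A : a-b=x\}|$, so that $E_{2,2}(A) = \sum_x r_{A-A}(x)^2$ and $E_{2,3}(A) = \sum_x r_{A-A}(x)^3$.

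First, I would perform a dyadic pigeonhole on the level sets of $r_{A-A}$. Define, for each dyadic $\Delta$, the popular difference set $D_{\Delta} = \{x : \Delta \le r_{A-A}(x) < 2\Delta\}$. Since $E_{2,2}(A) = |A|^3/K$, there exists a dyadic $\Delta$ so that $D_\Delta$ carries a $\gg (\log |A|)^{-1}$ fraction of the total second energy; hence $|D_\Delta|\Delta^2 \gg |A|^3/(K \log|A|)$. The third energy bound $E_{2,3}(A) = M|A|^4/K^2$ gives the complementary control $|D_\Delta| \Delta^3 \ll M|A|^4/K^2$, and dividing yields a Cauchy--Schwarz type estimate on the typical value $\Delta \ll M \cdot |A|/K$. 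Writing $\alpha = \Delta/|A|$, this is the standard ``popularity'' threshold $r_{A-A}(x) \gtrsim (\log |A|)^{-1} \alpha |A|$ along with $|D_\Delta| \lesssim K/(\alpha^2 (\log|A|))$.

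Next I would run a BSG-type graph argument on the bipartite graph $G \subseteq A \times A$ whose edges are the pairs $(a,b)$ with $a-b \in D_\Delta$. The edge count is $\sum_{x \in D_\Delta} r_{A-A}(x) \gg |D_\Delta|\Delta$, which, after normalization, says that $G$ has edge density $\gg 1/K$ (up to logarithmic losses). Now one applies the standard BSG machinery: by repeated Cauchy--Schwarz, one locates a subset $A' \subseteq A$ with $|A'| \gg |A|/(M^{O(1)}(\log M)^{O(1)})$ such that for $\gg |A'|^2$ of the pairs $(a,b) \in A'\times A'$, there exist $\gg \alpha^2 |A|^2/K^{O(1)}$ triples $(x,y,z) \in A^3$ giving $(a-x, x-y, y-b)$ all belonging to $D_\Delta$ (paths of combinatorial length $3$ in the Cayley-type graph). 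This is the higher-energy incarnation of the usual neighborhood-of-neighborhoods argument.

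To convert path counts into a bound on $|2A'-A'|$, I would use that any element of $2A'-A'$ of the form $a_1 + a_2 - a_3$ (with $a_i \in A'$) can be expressed in $\gg \alpha^3 |A|^3/K^{O(1)}$ ways as $(a_1 - x_1) + (a_2 - x_2) + (y + x_1 + x_2 - a_3)$ by threading three independent length-one excursions through the graph, each realized in $\gtrsim \Delta$ ways. A Plünnecke--Ruzsa type inequality applied to the popular set $D_\Delta$ (whose $E_{2,2}$ and $E_{2,3}$ we control from the hypotheses) then bounds the number of realizable $a_1+a_2-a_3$, giving $|2A'-A'| \ll M^{O(1)}(\log M)^{O(1)} K |A'|$.

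The main obstacle is obtaining the precise exponents $10, 15, 162, 252$ appearing in \eqref{ffcl}; these emerge from accumulating losses at each BSG cleanup step (each deployment of Cauchy--Schwarz roughly squares the relevant parameter, and paths of length $3$ amplify both the $M$ and $K$ dependencies), and the logarithmic factors come from each dyadic pigeonhole. I would track these carefully at each stage, paying particular attention to the way that the third-moment hypothesis replaces the second-moment one in the BSG selection phase, since this is what allows the conclusion to be phrased in terms of $2A'-A'$ rather than the weaker $A'-A'$.
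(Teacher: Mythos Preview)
The paper does not prove this lemma at all: it is simply quoted verbatim as \cite[Theorem~1.3]{Sh2013} and used as a black box. So there is nothing to compare your argument against in the paper itself; any genuine proof must be reconstructed from Shkredov's original work.

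Your sketch follows the right philosophy --- dyadic pigeonhole on the level sets of the difference-representation function, followed by a BSG-type graph extraction refined using the third-moment hypothesis --- and this is indeed the shape of Shkredov's argument. However, two points deserve caution. First, a definitional issue: you write $E_{2,3}(A)=\sum_x r_{A-A}(x)^3$, but in this paper $E_{2,3}(A)=\sum_{\vec n} r_2(\vec n)^3$ with $r_2(\vec n)=|\{(a,b)\in A^2:a+b=\vec n\}|$; the second and third moments of $r_{A+A}$ and $r_{A-A}$ agree for the second moment but not in general for the third, so you should check that the version you invoke matches what is actually proved in \cite{Sh2013}. Second, your third step --- turning path counts into a bound on $|2A'-A'|$ via ``threading three independent length-one excursions'' and a Pl\"unnecke--Ruzsa inequality on $D_\Delta$ --- is too vague to constitute a proof: the actual mechanism in Shkredov's paper uses a more delicate iteration (involving the operator $T_A$ and repeated applications of the eigenvalue/spectral bound for convolution operators) to reach $2A'-A'$, and this is precisely where the large exponents $162$ and $252$ are generated. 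As written, your outline does not supply enough detail to recover any specific exponents, let alone the stated ones; if you intend to include a self-contained proof rather than cite \cite{Sh2013}, this step would need to be substantially expanded.
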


%---------------------------------------------------------------------------------------------------------------------------
%---------------------------------------------------------------------------------------------------------------------------
%---------------------------------------------------------------------------------------------------------------------------
%---------------------------------------------------------------------------------------------------------------------------
%---------------------------------------------------------------------------------------------------------------------------
%---------------------------------------------------------------------------------------------------------------------------

\section{Set up for the three dimensional case}

We begin this section by presenting some preliminary estimates for the additive energy of finite subsets of $S_{3,m}$ using a combination of Lemma $\ref{jarnik}$ and elementary combinatorial arguments. Thus, we fix a non-empty subset $A$ of $S_{3,m}$, and define the additive energy $E_{s,2}(A)$ and the representation function $r_{s}=r_{s}(A, \cdot)$ as in \S1.

\begin{lemma} \label{trives}
For any $\vec{n} \neq 0$, we have $r_{2}(\vec{n}) \ll_{\epsilon} m^{\epsilon}$, and $r_{2}(0) \ll |A|$, and $r_{3}(0) \ll_{\epsilon}m^{\epsilon}|A|$. Moreover, when $s \geq 3$, we have
\[ r_{s}(A) \ll_{\epsilon} m^{\epsilon}|A|^{s-2}. \]
Lastly, when $s \geq 2$, we have
\[ E_{s,2}(A) \ll_{\epsilon} m^{\epsilon}|A|^{2s-2}. \]
\end{lemma}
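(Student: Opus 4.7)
The crucial input is Lemma~\ref{hpitw} applied with $d=3$: for any non-zero $\vec{v} \in \mathbb{Z}^3$ with $\n{\vec{v}}_{\infty} \leq m$, one has $|S_{3,m} \cap (\vec{v} + S_{3,m})| \ll_{\epsilon} m^{\epsilon}$. Every estimate in the lemma will be deduced either directly from this or from it through the convolution identity $r_{s}(\vec{n}) = \sum_{\vec{a} \in A} r_{s-1}(\vec{n}-\vec{a})$ together with elementary manipulations, and the plan is to prove the five estimates in the order they appear in the statement.

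For the bound on $r_{2}(\vec{n})$ with $\vec{n} \neq 0$, I note that any representation $\vec{a}_{1} + \vec{a}_{2} = \vec{n}$ with $\vec{a}_{1},\vec{a}_{2} \in A$ is determined by $\vec{a}_{1}$, so $r_{2}(\vec{n}) = |A \cap (\vec{n} - A)| \leq |S_{3,m} \cap (\vec{n}-S_{3,m})|$. Using the symmetry $S_{3,m} = -S_{3,m}$ and the trivial estimate $\n{\vec{n}}_{\infty} \leq 2\sqrt{m} \leq m$ (for $m$ large), the above input yields $r_{2}(\vec{n}) \ll_{\epsilon} m^{\epsilon}$. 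The bound $r_{2}(0) \leq |A|$ is immediate, since $\vec{a}_{1}+\vec{a}_{2}=0$ forces $\vec{a}_{2}=-\vec{a}_{1}$, giving at most one companion for each $\vec{a}_{1} \in A$.

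For $r_{3}(0)$, I write $r_{3}(0) = \sum_{\vec{a}_{3} \in A} r_{2}(-\vec{a}_{3})$, and since $0 \notin S_{3,m}$ whenever $m \geq 1$, every vector $-\vec{a}_{3}$ is non-zero; the $r_{2}(\vec{n})$ bound then gives $r_{3}(0) \ll_{\epsilon} m^{\epsilon}|A|$. For $r_{s}(A)$ with $s \geq 3$, I argue by induction on $s$. In the base case $s=3$, for arbitrary $\vec{n}$ I split $r_{3}(\vec{n}) = \sum_{\vec{a} \in A} r_{2}(\vec{n}-\vec{a})$ into the at most one term with $\vec{a}=\vec{n}$ (contributing at most $r_{2}(0) \leq |A|$) and the remaining at most $|A|$ terms, each bounded by $O_{\epsilon}(m^{\epsilon})$. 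The induction step uses $r_{s}(\vec{n}) \leq |A| \cdot r_{s-1}(A) \ll_{\epsilon} m^{\epsilon}|A|^{s-2}$.

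Finally, the additive energy bound follows from the identity $E_{s,2}(A) = \sum_{\vec{n}} r_{s}(\vec{n})^{2}$. For $s \geq 3$ I use $E_{s,2}(A) \leq \n{r_{s}}_{\infty} \sum_{\vec{n}} r_{s}(\vec{n}) = r_{s}(A) \cdot |A|^{s} \ll_{\epsilon} m^{\epsilon}|A|^{2s-2}$. The slightly delicate case is $s=2$, where $\n{r_{2}}_{\infty}$ is only controlled by $|A|$; here I split $E_{2,2}(A) = r_{2}(0)^{2} + \sum_{\vec{n} \neq 0} r_{2}(\vec{n})^{2}$, bounding the second sum by $O_{\epsilon}(m^{\epsilon}) \sum_{\vec{n}} r_{2}(\vec{n}) = O_{\epsilon}(m^{\epsilon}|A|^{2})$. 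No serious obstacle arises in this proof; the only point needing a touch of care is the separate treatment of the $\vec{n} = 0$ contribution whenever representation functions can be appreciably larger there than elsewhere.
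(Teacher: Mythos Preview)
Your proof is correct and follows essentially the same route as the paper's own argument: bound $r_2(\vec{n})$ for $\vec{n}\neq 0$ via the intersection of two translates of $S_{3,m}$, handle $\vec{n}=0$ trivially, and then bootstrap to $r_s$ and $E_{s,2}$ by elementary convolution/splitting plus $\sum_{\vec{n}} r_s(\vec{n}) = |A|^s$. The only cosmetic differences are that the paper cites Lemma~\ref{jarnik} directly (rather than Lemma~\ref{hpitw}, which is derived from it), and the paper's induction for $r_s$ peels off two variables at a time whereas yours peels off one; neither change affects the substance.
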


\begin{proof}
We note that $r_{2}(0) \leq |A|$ trivially, and that $r_{s}(\vec{n}) \neq 0$ if and only if $\vec{n} \in sA \subseteq sS_{3,m}$, whence, $\n{\vec{n}}_{\infty} \ll_{s} m$. Thus, when $\vec{n} \neq 0$, we see that 
\begin{equation} \label{luke2}
r_{2}(\vec{n}) \leq |C_{\vec{n}}| = |H_{\vec{n}} \cap S_{3,m}| \ll_{\epsilon} m^{\epsilon}, 
\end{equation}
where the last inequality follows from Lemma $\ref{jarnik}$. Furthermore, inequality $\eqref{luke2}$ delivers the bounds $r_{3}(0) \ll_{\epsilon} m^{\epsilon} |A|$, and $r_{2}(\vec{n}) \ll_{\epsilon} m^{\epsilon}$ whenever $\vec{n} \neq 0$, in a straightforward manner. When $s \geq 3$, we have
\begin{align*}
 r_{s}(0) 
 & = \sum_{\vec{a}_1, \dots, \vec{a}_{s} \in A} \mathds{1}_{\vec{a}_1 + \dots + \vec{a}_s = 0} =   \sum_{\vec{a}_1+ \dots + \vec{a}_{s-2}=0} \mathds{1}_{\vec{a}_1 + \dots + \vec{a}_s = 0} + \sum_{\vec{a}_1+ \dots + \vec{a}_{s-2}\neq 0} \mathds{1}_{\vec{a}_1 + \dots + \vec{a}_s = 0} \\
& \leq  r_{s-2}(0)r_{2}(0) + |A|^{s-2} \sup_{\vec{n} \neq 0} r_{2}(-\vec{n}) \ll_{\epsilon} m^{\epsilon}  |A|^{s-2},
\end{align*}
giving us the required conclusion. Similarly, letting $s \geq 3$ and $\vec{n} \neq 0$, we get
\[ r_{s}(\vec{n}) \leq |A|^{s-2} \sup_{\vec{n'} \neq 0} r_{2}(\vec{n'}) + r_{s-2}(0)  r_{2}(\vec{n}) + r_{s-2}(\vec{n}) r_{2}(0) , \]
which gives us $ r_{s}(\vec{n}) \ll_{\epsilon} m^{\epsilon}|A|^{s-2}.$ Next, we use $\eqref{cs1}$ to see that
\[  E_{s,2}(A) = \sum_{\vec{n}} r_{s}(\vec{n})^2 \ll_{\epsilon} m^{\epsilon} |A|^{s-2} \sum_{\vec{n}} r_{s}(\vec{n}) =  m^{\epsilon} |A|^{2s-2},\]
 whenever $s \geq 3$. Finally, when $s=2$, we have
 \[ E_{2,2}(A) = \sum_{\vec{n} \neq 0} r_{2}(\vec{n})^2 + r_{2}(0)^2 \ll_{\epsilon} m^{\epsilon} |A|^2. \qedhere \] 
 \end{proof}
 
We will now record the specific incidence estimate that we will be using to prove Theorems $\ref{main}$ and $\ref{linf}$.

\begin{lemma} \label{wtst}
Let $P$ be a set of points in $\mathbb{Z}^3$ with an associated weight function $w : P \to \mathbb{N}$, and let $L$ be a set of lattice point translates of the sphere $\mathcal{S}_{3,m}$ with an associated weight function $w' : L \to \mathbb{N}$. Suppose that $C>0$ satisfies $\n{\vec{p}}_{\infty} \leq m^{C}$ for every $\vec{p} \in P$. Then 
\[ I_{w,w'}(P,L)   \ll_{C,\epsilon} m^{\epsilon} (\n{w}_2^{2/5} \n{w}_{1}^{3/5} \n{w'}_{2}^{4/5} \n{w'}_1^{1/5} + \n{w}_1 \n{w'}_{\infty} + \n{w}_{\infty} \n{w'}_{1}). \]
\end{lemma}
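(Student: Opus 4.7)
The plan is to prove Lemma \ref{wtst} by stitching together the unweighted point--sphere incidence bound (Lemma \ref{st1}) with the dyadic promotion from unweighted to weighted (Lemma \ref{6on}), using Corollary \ref{mnf2} to supply the requisite $K_{s,t}$-free hypothesis. Since every sphere in $L$ is a variety of degree $2$ in $\mathbb{R}^3$ and all points in $P$ satisfy $\n{\vec p}_{\infty} \leq m^{C}$, Corollary \ref{mnf2} with $d=3$ tells us that the incidence graph of $P' \times L'$ is $K_{2,t}$-free for every $P' \subseteq P$ and $L' \subseteq L$, with one common value $t \ll_{C,\epsilon} m^{\epsilon}$.

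Next, I would apply Lemma \ref{st1} with the parameters $d=3$, $k=2$ and $s=2$. The exponents that appear are
\[
\frac{(d-1)s}{ds - 1} = \frac{2 \cdot 2}{3 \cdot 2 - 1} = \frac{4}{5}, \qquad \frac{d(s-1)}{ds-1} = \frac{3}{5},
\]
so the lemma yields
\[
I(P', L') \ll_{C,\epsilon} m^{\epsilon}\bigl(|P'|^{4/5}\,|L'|^{3/5} + |P'| + |L'|\bigr)
\]
uniformly for all non-empty $P' \subseteq P$ and $L' \subseteq L$ (absorbing the factor $t|P'|^{\epsilon}$ into $m^{\epsilon}$, as noted in the remarks following Lemma \ref{st1}).

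Finally, I would feed this into Lemma \ref{6on} with $a = 4/5$ and $b = 3/5$; both lie in the admissible interval $(1/2, 1)$, and the constant $\mathcal{C}$ may be taken to be $O_{C,\epsilon}(m^{\epsilon})$. The exponents in the weighted bound come out to $2 - 2a = 2/5$, $2a - 1 = 3/5$, $2 - 2b = 4/5$ and $2b - 1 = 1/5$, giving exactly
\[
I_{w,w'}(P, L) \ll_{C,\epsilon} m^{\epsilon}\bigl(\n{w}_{2}^{2/5}\n{w}_{1}^{3/5}\n{w'}_{2}^{4/5}\n{w'}_{1}^{1/5} + \n{w}_{\infty}\n{w'}_{1} + \n{w}_{1}\n{w'}_{\infty}\bigr),
\]
which is the claimed estimate.

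The argument is essentially a three-step assembly with no combinatorial obstacle of its own; the only minor care-point is the bookkeeping of the factor $t|P'|^{\epsilon}$ from Lemma \ref{st1}, which remains $O_{C,\epsilon}(m^{\epsilon})$ uniformly across the dyadic shells $P_j$ and $L_k$ used inside Lemma \ref{6on} (since each shell is a subset of the respective ambient set of size at most $m^{O(1)}$). Consequently, passing to weights costs only another harmless $m^{\epsilon}$ factor, and the lemma follows.
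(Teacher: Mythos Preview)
Your proof is correct and follows essentially the same route as the paper: combine Corollary~\ref{mnf2} (with $d=3$) and Lemma~\ref{st1} (with $d=3$, $s=2$) to get the unweighted bound $I(P',L') \ll_{C,\epsilon} m^{\epsilon}(|P'|^{4/5}|L'|^{3/5}+|P'|+|L'|)$, then feed this into Lemma~\ref{6on} with $a=4/5$, $b=3/5$. The only content beyond the paper's version is that you have written out the exponent arithmetic explicitly, which is fine.
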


\begin{proof}
We see that $|P|^{\epsilon} \ll m^{3C\epsilon}$, and so, we put together Corollary $\ref{mnf2}$ along with Lemma $\ref{st1}$ applied with the parameters $s=2$ and $d=3$ and $t \ll_{\epsilon} m^{\epsilon}$ to discern that for every $P' \subseteq P$ and for every $L' \subseteq L$, we have
\[ I(P',L') \ll_{\epsilon,C} m^{\epsilon}( |P'|^{4/5} |L'|^{3/5} + |P'| + |L'|), \]
which, in turn, combines with Lemma $\ref{6on}$ to furnish the desired conclusion.
\end{proof}

%---------------------------------------------------------------------------------------------------------------------------
%---------------------------------------------------------------------------------------------------------------------------
%---------------------------------------------------------------------------------------------------------------------------
%---------------------------------------------------------------------------------------------------------------------------
%---------------------------------------------------------------------------------------------------------------------------
%---------------------------------------------------------------------------------------------------------------------------

\section{Proofs of Theorems $\ref{main}$ and $\ref{linf}$}

We use this section to prove Theorems $\ref{main}$ and $\ref{linf}$. We begin by recording a lemma that estimates $E_{s,2}(A)$ in terms of $s,m, |A|$ and $E_{s-1,2}(A)$.

\begin{lemma} \label{iter}
Let $s\geq 3$. Then, we have
\[ E_{s,2}(A) \ll_{s,\epsilon} m^{\epsilon} ( |A|^{(4s-3)/3} E_{s-1,2}(A)^{1/3}  + |A|^{2s-3} ).\]
\end{lemma}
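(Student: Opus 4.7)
The plan is to view $E_{s,2}(A)$ as a weighted point--sphere incidence count and then invoke Lemma \ref{wtst}. Via the convolution identity $r_s = \mathds{1}_A * r_{s-1}$ one writes
\[ E_{s,2}(A) = \sum_{\vec{a} \in A,\; \vec{m} \in (s-1)A} r_{s-1}(\vec{m}) \, r_s(\vec{a} + \vec{m}). \]
The crucial structural input is that every $\vec{a} \in A$ lies on $S_{3,m}$, so the decomposition $\vec{n} = \vec{a} + \vec{m}$ forces $\vec{n} - \vec{m} \in S_{3,m}$; relaxing $\vec{n} - \vec{m} \in A$ to $\vec{n} - \vec{m} \in S_{3,m}$ then yields
\[ E_{s,2}(A) \leq I_{r_{s-1}, r_s}(P, L), \]
where $P = (s-1)A$ carries weight $r_{s-1}$ and $L = \{\vec{n} - \mathcal{S}_{3,m} : \vec{n} \in sA\}$ is the family of radius-$\sqrt m$ spheres centred at the elements of $sA$, carrying weight $r_s$.

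By Corollary \ref{mnf2} applied with $d=3$ and $X = sA$, the incidence graph of $P \times L$ is $K_{2,t}$-free for some $t \ll_\epsilon m^\epsilon$, so Lemma \ref{wtst} gives
\[ E_{s,2}(A) \ll_{\epsilon} m^\epsilon \bigl( \|r_{s-1}\|_2^{2/5} \|r_{s-1}\|_1^{3/5} \|r_s\|_2^{4/5} \|r_s\|_1^{1/5} + \|r_{s-1}\|_\infty \|r_s\|_1 + \|r_{s-1}\|_1 \|r_s\|_\infty \bigr). \]
Using $\|r_{s-1}\|_1 = |A|^{s-1}$, $\|r_s\|_1 = |A|^s$, $\|r_{s-1}\|_2^2 = E_{s-1,2}(A)$ and $\|r_s\|_2^2 = E_{s,2}(A)$, the main term becomes $E_{s-1,2}(A)^{1/5} E_{s,2}(A)^{2/5} |A|^{(4s-3)/5}$; dividing through by $E_{s,2}(A)^{2/5}$ and raising to the power $5/3$ produces exactly the advertised main contribution $|A|^{(4s-3)/3} E_{s-1,2}(A)^{1/3}$.

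It remains to absorb both additive error terms into $m^\epsilon |A|^{2s-3}$. When $s \geq 4$, Lemma \ref{trives} gives $\|r_s\|_\infty \ll m^\epsilon |A|^{s-2}$ and $\|r_{s-1}\|_\infty \ll m^\epsilon |A|^{s-3}$, so that both $\|r_{s-1}\|_\infty \|r_s\|_1$ and $\|r_{s-1}\|_1 \|r_s\|_\infty$ are bounded by $m^\epsilon |A|^{2s-3}$. The main obstacle is the base case $s = 3$, where $\|r_2\|_\infty$ can be as large as $|A|$ because of $r_2(\vec{0}) = |A \cap (-A)|$, so the naive bound $\|r_2\|_\infty |A|^3 \leq |A|^4$ is too weak. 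I would overcome this by splitting $P = (s-1)A$ into $\{\vec{0}\}$ and its complement: on the complement $\|r_2\|_\infty \ll m^\epsilon$ by Lemma \ref{trives} and the previous derivation applies verbatim, while the singleton contribution equals $r_2(\vec{0}) \sum_{\vec{n} \in 3A \cap S_{3,m}} r_3(\vec{n})$, and a direct hyperplane argument in the spirit of Lemma \ref{jarnik} bounds the latter sum by $m^\epsilon |A|^2$ (for $(\vec{a}_1, \vec{a}_2, \vec{a}_3) \in A^3$ with $\sum \vec{a}_i \in S_{3,m}$, expanding $\|\sum \vec{a}_i\|^2 = m$ forces $\vec{a}_3$ onto a hyperplane whenever $\vec{a}_1 + \vec{a}_2 \neq \vec{0}$, while the degenerate slice contributes at most $r_2(\vec{0}) |A|$). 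This yields a singleton contribution of at most $|A| \cdot m^\epsilon |A|^2 = m^\epsilon |A|^{2s-3}$, completing the proof.
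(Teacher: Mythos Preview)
Your proposal is correct and follows essentially the same approach as the paper: rewrite $E_{s,2}(A)$ as a weighted point--sphere incidence between $P=(s-1)A$ with weight $r_{s-1}$ and spheres centred on $sA$ with weight $r_s$, then apply Lemma~\ref{wtst}. The only cosmetic difference is in handling the degenerate $s=3$ case: the paper excises $\vec{0}$ from both $P$ and the sphere-centre set upfront and bounds the excised contribution via $r_{2}(0)r_{4}(0) \ll_\epsilon m^\epsilon |A|^3$ (directly from Lemma~\ref{trives}), whereas you excise only $\vec{0}$ from $P$ and control the singleton contribution $r_2(\vec{0})\sum_{\vec{n}\in 3A\cap S_{3,m}} r_3(\vec{n})$ by a separate hyperplane count; both routes yield the same $m^\epsilon |A|^{2s-3}$ bound.
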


\begin{proof}
We write
\[ E_{s,2}(A) = \sum_{\vec{a}_1, \dots, \vec{a}_{2s} \in A} \mathds{1}_{\vec{a}_2 + \dots + \vec{a}_s = \vec{a}_{s+1} + \dots + \vec{a}_{2s} - \vec{a}_1}. \]
We first count the contribution from solutions of the form
\begin{equation} \label{in1}
 0 = \vec{a}_2 + \dots + \vec{a}_s = \vec{a}_{s+1} + \dots + \vec{a}_{s} - \vec{a}_1 , 
\end{equation}
with $\vec{a}_i \in A$ for $1 \leq i \leq 2s$. In particular, this is bounded above by $r_{s-1}(0)r_{s+1}(0).$ Using Lemma $\ref{trives}$, we see that when $s =3$, this is bounded above by $O_{\epsilon}(m^{\epsilon}|A|^3)$, and when $s\geq 4$, this is bounded above by $O_{\epsilon}(m^{\epsilon} |A|^{2s-3})$. In either case, we can bound the contribution from solutions satisfying $\eqref{in1}$ by $O_{\epsilon}(m^{\epsilon}|A|^{2s-3})$. 
\par

Similarly, we consider contribution from solutions satisfying
\begin{equation} \label{in2}
0 = \vec{a}_{s+1} + \dots + \vec{a}_{2s} = \vec{a}_1 + \vec{a}_2 + \dots + \vec{a}_{s},
\end{equation}
with $\vec{a}_i \in A$ for $1 \leq i \leq 2s$. In particular, this is bounded above by $r_{s}(0)^2$. Using Lemma $\ref{trives}$, we deduce that when $s \geq 3$, we have $r_{s}(0)^2 \ll_{\epsilon} m^{\epsilon}|A|^{2s-4}$, whence, we can bound the contribution from solutions satisfying $\eqref{in2}$ by $O_{\epsilon}(m^{\epsilon}|A|^{2s-4})$. 
\par

Thus, it suffices to prove that
\begin{equation} \label{pyr}
 \sum_{\substack{\vec{a}_2 + \dots + \vec{a}_s \neq 0,  \\ \vec{a}_{s+1} + \dots +  \vec{a}_{2s} \neq 0}} \mathds{1}_{\vec{a}_2 + \dots + \vec{a}_s = \vec{a}_{s+1} + \dots + \vec{a}_{2s} - \vec{a}_1}  \ll_{\epsilon} m^{\epsilon}( |A|^{(4s-3)/3} E_{s-1,2}(A)^{1/3}  + |A|^{2s-3} ). 
 \end{equation}
Setting $\vec{a} = \vec{a}_1, \ \text{and} \  \vec{u} = \vec{a}_2 + \dots + \vec{a}_{s} \ \text{and} \ \vec{v} = \vec{a}_{s+1} + \dots + \vec{a}_{2s},$ the sum on the left hand side of $\eqref{pyr}$ can be rewritten as 
\[ \sum_{\vec{v} \in sA \setminus \{0\}} \sum_{\vec{u} \in (s-1)A \setminus \{0\}} \sum_{\vec{a} \in A}  r_{s-1}(\vec{v}) r_{s}(\vec{u}) \mathds{1}_{\vec{u} = \vec{v}-\vec{a}}. \]
Writing $l_{\vec{v}}  =  \vec{v} - \mathcal{S}_{3,m}$ for each $\vec{v} \in \mathbb{R}^3$, we see that the above sum can be bounded above by $I_{w,w'}(P,L)$, where $P = (s-1) A \setminus \{0\}$ and $w(\vec{n}) = r_{s-1}(\vec{n})$ for each $\vec{n} \in P$, and 
\[ L = \{ l_{\vec{v}} \ | \ \vec{v} \in s A \setminus \{0\} \}\]
and $w'(l_{\vec{v}}) = r_{s}(\vec{v})$ for each $l_{\vec{v}} \in L$. 
Moreover, since $P \subset (s-1)A \subseteq (s-1)S_{3,m}$, we have $\n{\vec{p}}_{\infty} \ll s m^{1/2}$ for every $\vec{p} \in P$, and so, we can use Lemma $\ref{wtst}$ to deduce that
\[ I_{w,w'}(P,L) \ll_{s, \epsilon} m^{\epsilon} (\n{w}_2^{2/5} \n{w}_{1}^{3/5} \n{w'}_{2}^{4/5} \n{w'}_1^{1/5} + \n{w}_1 \n{w'}_{\infty} + \n{w}_{\infty} \n{w'}_{1}). \]
\par

We note that
\[ \n{w}_1 = |A|^{s-1} \ \text{and} \ \n{w}_2^2 = E_{s-1,2}(A) \ \text{and} \ \n{w}_{\infty} \ll_{\epsilon} m^{\epsilon} |A|^{s-3}, \]
with the last inequality following from Lemma $\ref{trives}$. Similarly, we have
\[ \n{w'}_1 = |A|^{s} \ \text{and} \ \n{w'}_2^2 = E_{s,2}(A) \ \text{and} \ \n{w'}_{\infty} \ll_{\epsilon} m^{\epsilon} |A|^{s-2}. \]
Combining these estimates with our incidence bound, we obtain 
\[ E_{s,2}(A) \ll_{s,\epsilon} m^{\epsilon}(E_{s-1,2}(A)^{1/5} |A|^{(3s-3)/5} E_{s,2}(A)^{2/5}|A|^{s/5} + |A|^{2s-3}).\]
Upon simplifying the above bound, we find that
\[ E_{s,2}(A) \ll_{s,\epsilon} m^{\epsilon} ( |A|^{(4s-3)/3} E_{s-1,2}(A)^{1/3}  + |A|^{2s-3} ), \]
whence, we conclude the proof of Lemma $\ref{iter}$. 
\end{proof}

We will now present Theorem $\ref{main}$ as a straightforward corollary of Lemma $\ref{iter}$. 

\begin{proof}[Proof of Theorem $\ref{main}$]
Our proof proceeds via induction. First, setting $s=2$, we use Lemma $\ref{trives}$ to see that $E_{2,2}(A) \ll_{\epsilon} m^{\epsilon} |A|^{2},$ in which case, we are done. We now move to the inductive step, and so, letting $s \geq 3$, we assume that
\[ E_{s-1,2}(A) \ll_{s,\epsilon} |A|^{2s -5+ 1/2 + 2^{-1} \cdot 3^{-s+3}}.\]
Applying Lemma $\ref{iter}$, we get
\begin{align*}
 E_{s,2}(A) & \ll_{s,\epsilon} m^{\epsilon}(|A|^{2s/3 -5/3+ 1/6 + 2^{-1} \cdot 3^{-s+2}}|A|^{(4s-3)/3} + |A|^{2s-3}) \\
& \ll_{s,\epsilon} m^{\epsilon}|A|^{2s - 3 + 1/2 + 2^{-1} \cdot 3^{-s+2}}.
\end{align*}
This finishes the inductive step, and so, we see that Theorem $\ref{main}$ holds for all $s \geq 2$.
\end{proof}

Our next goal is to prove Theorem $\ref{linf}$, and this will be our main aim throughout the rest of this section. 

\begin{proof}[Proof of Theorem $\ref{linf}$]
Let $\vec{n} \in sA$. We first consider the case when $s \geq 5$ is an odd natural number. Thus, writing $s = 2p+1$ for some $p \geq 2$, we have
\[ r_{s}(\vec{n}) = \sum_{\vec{a}_1, \dots, \vec{a}_{2p+1} \in A} \mathds{1}_{\vec{a}_1 + \dots + \vec{a}_{p} = \vec{n} - \vec{a}_{p+1} - \dots - \vec{a}_{2p}  - \vec{a}_{2p+1}}. \]
As before, we count the contribution of terms of the form 
\[ 0 =  \vec{a}_1 + \dots + \vec{a}_{p} = \vec{n} - \vec{a}_{p+1} - \dots - \vec{a}_{2p}  - \vec{a}_{2p+1}, \]
with $\vec{a}_i \in A$ for $1 \leq i \leq 2p+1$. This contribution can be estimated to be $r_{p}(0) r_{p+1}(\vec{n})$. Using Lemma $\ref{trives}$, when $p=2$, this is bounded above by $O_{\epsilon}(m^{\epsilon}|A|^2)$, and when $p \geq 3$, this is bounded above by $O_{\epsilon}(m^{\epsilon}|A|^{2p -3})$. Thus, for all $p \geq 1$, we can estimate the number of such solutions by $O_{\epsilon}(m^{\epsilon}|A|^{2p - 2})$. Similarly, we consider the contribution of solutions satisfying
\[ \vec{a}_1 + \dots + \vec{a}_{p} - \vec{n} + \vec{a}_{2p+1} =  - \vec{a}_{p+1} - \dots - \vec{a}_{2p}  =  0, \]
with $\vec{a}_i \in A$ for $1 \leq i \leq 2p+1$. We can bound this by $r_{p}(0)r_{p+1}(\vec{n})$, which, in turn, we can estimate by $O_{\epsilon}(m^{\epsilon}|A|^{2p - 2})$. 
\par

Since $|A|^{2p-2} = |A|^{s-3}$, it suffices to consider the sum
\[ \sum_{\substack{\vec{a}_1 + \dots + \vec{a}_p \neq 0, \\ \vec{a}_{p+1} + \dots + \vec{a}_{2p} \neq 0}} \mathds{1}_{\vec{a}_1 + \dots + \vec{a}_{p} = \vec{n} - \vec{a}_{p+1} - \dots - \vec{a}_{2p}  - \vec{a}_{2p+1}}. \]
As before, we can now rewrite the above sum as 
\[ \sum_{\vec{u} \in pA \setminus \{0\} } \sum_{\vec{v} \in pA\setminus \{0\}} \sum_{\vec{a} \in A} r_{p}(\vec{u})r_{p}(\vec{v}) \mathds{1}_{\vec{u} = \vec{n} - \vec{v} - \vec{a}}, \]
whereupon, noting the fact that $\vec{a} \in A \subseteq \mathcal{S}_{3,m}$, this can be further bounded above by 
\[ \sum_{\vec{u} \in pA \setminus \{0\} } \sum_{\vec{v} \in pA\setminus \{0\}} \sum_{\vec{a} \in A} r_{p}(\vec{u})r_{p}(\vec{v}) \mathds{1}_{\vec{u} \in  l_{\vec{v}}'},   \]
where for each $\vec{v} \in pA\setminus \{0\}$, we define $l_{\vec{v}}' = \vec{n} - \vec{v} - \mathcal{S}_{3,m}$. In particular, this is a weighted incidence count between the set of points $p A \setminus \{0\}$ and the set $L = \{ l_{\vec{v}}'  \ | \ \vec{v} \in p A \setminus \{0\}\}$, where for every $\vec{u} \in p A \setminus \{0\}$, we have $w(\vec{v}) = r_{p}(\vec{u})$ and $\n{\vec{u}}_{\infty} \ll p m^{1/2}$ while for every $l_{\vec{v}}' \in L$, we have $w'(l_{\vec{v}}' ) = r_{p}(\vec{v})$. Thus, combining this with Lemma $\ref{wtst}$, we see that
\[  r_{s}(\vec{n}) \ll_{s,\epsilon} m^{\epsilon} ( E_{p}(A)^{3/5}|A|^{4p/5} + |A|^{2p-2}) \ll_{s,\epsilon} m^{\epsilon} |A|^{2p-2 + 1/2 + 3\eta_{p}/5} . \]
This gives us
\[ r_{s}(\vec{n}) \ll_{s,\epsilon} m^{\epsilon} |A|^{s - 3 + 1/2 + \lambda_{s}} , \]
where $\lambda_{s} =  10^{-1} \cdot 3^{-(s-1)/2+3}$.
\par

In the case when $s = 2p$, we can proceed similarly as above to prove our result. Instead, we will deduce bounds of the same quality in a more direct fashion using estimates for the additive energy $E_{p}(A)$. In particular, using orthogonality, we can write
\[ r_{s}(\vec{n}) = \int_{[0,1)^3} \big(\sum_{\vec{u} \in A} e(\vec{\alpha} \cdot \vec{u})\big)^{2p} e(-\vec{n}\cdot\vec{\alpha}) d \vec{\alpha}. \]
Applying triangle inequality on the right hand side above and then using orthogonality again, we find that
\[ r_{s}(\vec{n}) \leq  \int_{[0,1)^3} \big|\sum_{\vec{u} \in A} e(\vec{\alpha} \cdot \vec{u})\big|^{2p} d \vec{\alpha} = E_{p,2}(A). \]
Combining the above bound with Theorem $\ref{main}$, we see that
\[ r_{s}(\vec{n}) \leq E_{p,2}(A) \ll_{p, \epsilon} m^{\epsilon} |A|^{2p-3 + 1/2 + \eta_{p}}. \]
Since $2p = s$ and $\eta_{p} = 2^{-1} 3^{-s/2 + 2} = \lambda_{s},$ we get the desired conclusion.
\end{proof}

%---------------------------------------------------------------------------------------------------------------------------
%---------------------------------------------------------------------------------------------------------------------------
%---------------------------------------------------------------------------------------------------------------------------
%---------------------------------------------------------------------------------------------------------------------------
%---------------------------------------------------------------------------------------------------------------------------
%---------------------------------------------------------------------------------------------------------------------------

\section{Incidence geometric estimates for additive energies when $d=4$}

In this section, we obtain our first set of bounds for $E_{2,3}(A)$ and $E_{s,2}(A)$ when $s \geq 2$ and $A$ is some non-empty subset of $S_{4,m}$. Moreover, this would involve studying elements $\vec{n} \in \mathbb{R}^4$ that have many representations as a sum of two elements from our set $A$. Thus, for each $\tau \geq 1$, we let 
\[ P_{\tau} = \{ \vec{n} \in \mathbb{R}^d \ | \ \tau \leq r_{2}(\vec{n})  < 2\tau\}. \]
We see that for every non-zero $\vec{n} \in P_{\tau}$, the set $C_{\vec{n}}$, and consequently, the hyperplane $H_{\vec{n}}$ contains at least $\tau$ elements of $A$. Hence, we let $\mathcal{H}_{\tau} = \{ H_{\vec{n}} \ | \ \vec{n} \in P_{\tau} \setminus \{0\}\}$.
\par

We now use a variant of the hyperplane trick used by Bourgain and Demeter to derive incidence geometric upper bounds for $|P_{\tau}|$ (see, for instance, \cite{BD2015a}, \cite{Shef}).  
\begin{Proposition} \label{kz2}
For each $\tau \geq 1$, we have 
\[ |P_{\tau}| \tau \ll_{\epsilon} m^{\epsilon} ( |P_{\tau}|^{\frac{6}{7}} |A|^{\frac{4}{7}} + |P_{\tau}| + |A|). \]
\end{Proposition}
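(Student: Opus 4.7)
The plan is to interpret $\tau|P_\tau|$ as the number of incidences between the points $P_\tau^{\ast} := P_\tau \setminus \{\vec{0}\}$ in $\mathbb{R}^4$ and the collection $V = \{V_{\vec{a}} : \vec{a} \in A\}$ of $|A|$ spheres $V_{\vec{a}} := \vec{a} + \mathcal{S}_{4,m}$, each a degree-$2$ variety in $\mathbb{R}^4$. The incidence $\vec{n} \in V_{\vec{a}}$ amounts to $\vec{n} - \vec{a} \in \mathcal{S}_{4,m}$, and holds whenever $\vec{n} = \vec{a} + \vec{b}$ for some $\vec{b} \in A$, since then $\vec{b} = \vec{n} - \vec{a} \in A \subseteq \mathcal{S}_{4,m}$. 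Hence for each $\vec{n} \in P_\tau^{\ast}$ at least $r_2(\vec{n}) \geq \tau$ spheres of $V$ pass through $\vec{n}$, yielding the lower bound $I(P_\tau^{\ast},V) \geq \tau(|P_\tau| - 1)$.

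The crucial structural step is to establish that the incidence graph is $K_{2,t}$-free with $t \ll_{\epsilon} m^{\epsilon}$. Given two distinct $\vec{n}_1,\vec{n}_2 \in P_\tau^{\ast}$, the spheres of $V$ through both correspond to $\vec{a} \in A \cap (\vec{n}_1 + \mathcal{S}_{4,m}) \cap (\vec{n}_2 + \mathcal{S}_{4,m})$, using that $-\mathcal{S}_{4,m} = \mathcal{S}_{4,m}$. Because $A \subseteq S_{4,m} = \vec{0} + S_{4,m}$, this set lies inside the intersection of the \emph{three} distinct translates $\vec{0} + S_{4,m}$, $\vec{n}_1 + S_{4,m}$, $\vec{n}_2 + S_{4,m}$, and since $\n{\vec{n}_i}_{\infty} \leq 2\sqrt{m}$, Lemma \ref{hpitw} (with $d = 4$ and $c = 1$) bounds this by $O_{\epsilon}(m^{\epsilon})$.

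With this $K_{2,t}$-freeness in hand, Lemma \ref{st1} applied with $d = 4$, $s = 2$ and $k = 2$ returns the incidence estimate
\[ I(P_\tau^{\ast},V) \ll_{\epsilon} m^{\epsilon} \bigl(|P_\tau|^{6/7}|A|^{4/7} + |P_\tau| + |A|\bigr), \]
the $|P_\tau|^{\epsilon}$ factor in the exponent being absorbed via the crude bound $|P_\tau| \ll m^{O(1)}$. Comparing with the lower bound $I(P_\tau^{\ast},V) \geq \tau(|P_\tau| - 1)$, and treating the degenerate case $|P_\tau| \leq 1$ trivially, closes the proof.

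The main obstacle, and the genuine novelty of this ``variant of the hyperplane trick'', lies in securing $K_{2,t}$-freeness with $t = O_{\epsilon}(m^{\epsilon})$. Generic intersections of two translates of $\mathcal{S}_{4,m}$ are $2$-spheres carrying up to order $m^{1/2 + \epsilon}$ lattice points, so such a bound cannot come from the two spheres $\vec{n}_1 + \mathcal{S}_{4,m}$ and $\vec{n}_2 + \mathcal{S}_{4,m}$ alone. The trick is to exploit the hypothesis $A \subseteq S_{4,m}$ as a \emph{free} third sphere in the intersection, allowing Lemma \ref{hpitw} to bring the count down to $O_{\epsilon}(m^{\epsilon})$. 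This is precisely where the lattice structure of the sphere --- as opposed to the paraboloid --- is being used, consistent with the discussion in \S2.
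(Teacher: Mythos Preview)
Your proof is correct. Both your argument and the paper's hinge on the same structural insight: the hypothesis $A \subseteq S_{4,m}$ supplies a third translate of $S_{4,m}$ for free, so Lemma~\ref{hpitw} yields a $K_{2,t}$-freeness condition with $t \ll_{\epsilon} m^{\epsilon}$, after which Lemma~\ref{st1} with $d=4$, $s=2$ gives the exponents $6/7$ and $4/7$.

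The routes to this point differ, however. The paper works with the hyperplanes $H_{\vec{n}}$ and first obtains that the incidence graph of $A \times \mathcal{H}_{\tau}$ is $K_{t,2}$-free (two hyperplanes share at most $O_{\epsilon}(m^{\epsilon})$ points of $A$); since the roles of $s$ and $t$ are on the wrong side for a direct application of Lemma~\ref{st1}, it then passes to the projective dual, turning $A$ into a family of hyperplanes and $\mathcal{H}_{\tau}$ into a point set, so that the resulting graph is $K_{2,t}$-free. You instead keep $P_{\tau}^{\ast}$ as the point set and use the spheres $V_{\vec{a}} = \vec{a} + \mathcal{S}_{4,m}$ as degree-$2$ varieties; the $K_{2,t}$-freeness then falls out directly from Lemma~\ref{hpitw} without any dualisation. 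Your route is somewhat more economical, while the paper's hyperplane formulation makes the connection to the Bourgain--Demeter ``hyperplane trick'' more explicit. The underlying content is the same.
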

\begin{proof}
We begin by noting that when $|P_{\tau}| \leq 1$, Proposition $\ref{kz2}$ holds trivially, and so, we can assume that $|P_{\tau}| \geq 2$. This, in turn, implies that $|P_{\tau}| \ll |P_{\tau} \setminus \{0\}| \ll |P_{\tau}|$, and as a result, it suffices to prove the desired inequality for the set $P_{\tau} \setminus \{0\}$. Furthermore, we claim that there are at most $O_{\epsilon}(m^{\epsilon})$ elements from $A$ lying simultaneously in $2$ distinct hyperplanes from $\mathcal{H}_{\tau}$. In order to see this, let the $2$ distinct hyperplanes be $H_{\vec{n}_1}, H_{\vec{n}_2}$. Since $A \cap H_{\vec{n}} \subseteq S_{4,m} \cap (\vec{n}+S_{4,m})$, we have
\[ A \cap H_{\vec{n}_1} \cap H_{\vec{n}_{2}} \subseteq S_{4,m} \cap (\vec{n}_1+S_{4,m}) \cap (\vec{n}_{2} + S_{4,m}), \]
for some non-zero, distinct $\vec{n}_1, \vec{n}_2 \in A+A$, whence, we can use Lemma $\ref{hpitw}$ to deduce the suitable claim.
\par

With this in hand, we see that the incidence graph of $A \times \mathcal{H}_{\tau}$ is $K_{t,2}$-free for some $t \ll_{\epsilon} m^{\epsilon}$. We now move to the dual space, that is, for any point $\vec{a} \in A$, we define the hyperplane 
\[ G_{\vec{a}} =\{ \vec{x} \in \mathbb{R}^4 \ | \  \vec{x} \cdot \vec{a} = 1 \}. \]
Similarly, since no hyperplane $H$ in $\mathcal{H}_{\tau}$ contains the origin, we may write every $H_{\vec{n}} \in \mathcal{H}_{\tau}$ as $\vec{x} \cdot \vec{u}_{\vec{n}} =1$ for some $\vec{u}_{\vec{n}}$. In this case, we define the point $p_{\vec{n}} = \vec{u}_{\vec{n}}$. Note that $\mathds{1}_{\vec{a} \in H_{\vec{n}}} = \mathds{1}_{p_{\vec{n}} \in G_{\vec{a}}},$ and so, writing 
\[ P = \{p_{\vec{n}} \ | \ \vec{n} \in P_{\tau} \setminus \{0\} \} \ \text{and} \ \mathcal{H}' = \{ G_{\vec{a}} \ | \ \vec{a} \in A\}, \]
we see that
\[ \sum_{\vec{a} \in A} \sum_{  H_{\vec{n}} \in \mathcal{H}_{\tau}   } \mathds{1}_{   \vec{a} \in H_{\vec{n}}   } =  \sum_{  p_{\vec{n}} \in P} \sum_{   G_{\vec{a}} \in \mathcal{H}'   } \mathds{1}_{  p_{\vec{n}} \in G_{\vec{a}}  }.\]
Moreover, since the incidence graph of $A \times \mathcal{H}_{\tau}$ is $K_{t,2}$-free for some $t \ll_{\epsilon} m^{\epsilon}$, we infer that the incidence graph of $P \times \mathcal{H}'$ is $K_{2, t}$-free for some $t \ll_{\epsilon} m^{\epsilon}$.
\par

Thus, we may apply Lemma $\ref{st1}$ with the parameters $s=2$ and $d= 4$ and $t \ll_{\epsilon} m^{\epsilon}$, along with the fact that $|P| = |P_{\tau} \setminus \{0\}| \leq |A+A| \leq m^{4}$, to obtain the inequality
\[  \sum_{\vec{a} \in A} \sum_{  H_{\vec{n}} \in \mathcal{H}_{\tau}   } \mathds{1}_{   \vec{a} \in H_{\vec{n}}   } =   \sum_{  p_{\vec{n}} \in P} \sum_{   G_{\vec{a}} \in \mathcal{H}'   } \mathds{1}_{  p_{\vec{n}} \in G_{\vec{a}}  } \ll_{\epsilon} m^{\epsilon} (|P|^{  \frac{6}{7}} |\mathcal{H}'|^{   \frac{4}{7}} + |P| + |\mathcal{H}'| ). \]
Finally, since each $H_{\vec{n}} \in \mathcal{H}_{\tau}$ contains at least $\tau$ elements of $A$, we have
\[ \sum_{\vec{a} \in A} \sum_{  H_{\vec{n}} \in \mathcal{H}_{\tau}   } \mathds{1}_{   \vec{a} \in H_{\vec{n}}   }  \geq |\mathcal{H}_{\tau} | \tau = |P_{\tau}\setminus \{0\}| \tau. \]
Combining this with the preceding inequality gives us the desired conclusion.
\end{proof}

We will now use these incidence methods to obtain our threshold bounds for $E_{2,2}(A)$ and $E_{2,3}(A)$, where
\begin{equation} \label{doingaton}
 E_{2,i}(A) = \sum_{\vec{n} \in 2A} r_{2}(\vec{n})^i \ll \sum_{1 \leq 2^j \leq |A|} |P_{2^j}| 2^{ij},  
 \end{equation}
for $i \in \mathbb{N}$. Our main idea is to roughly divide into two cases, that is, when $2^j \leq |A|^{\frac{1}{3}}$ and when $2^j > |A|^{\frac{1}{3}}$. In the former case, we can use the trivial inequality
\[ |P_{2^j}|2^{ji} \leq |A|^{\frac{i}{3}} \sum_{\vec{n}} r_{2}(\vec{n}) \leq |A|^{2+\frac{i}{3}}, \]
for $i \in \{2,3\}$. In the latter case, we will utilise Proposition $\ref{kz2}$ to obtain the required bound.

\begin{lemma} \label{floma}
Let $A \subseteq S_{4,m}$ be a finite set. Then 
\[ E_{2,2}(A) \ll_{ \epsilon} m^{\epsilon} |A|^{2+\frac{1}{3}}, \ \text{and} \ E_{2,3}(A) \ll_{\epsilon} m^{\epsilon} (|A|^{2+\frac{2}{3}}+|A| \sup_{\vec{n}}r_{2}(\vec{n})^2). \]
\end{lemma}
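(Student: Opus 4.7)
The plan is to decompose dyadically via the sets $P_\tau$ and to combine the incidence bound of Proposition \ref{kz2} with the trivial second-moment estimate. Starting from
\[ E_{2,i}(A) \ll \sum_{1 \leq 2^j \leq |A|} |P_{2^j}| 2^{ij}, \]
I would split the dyadic sum at the threshold $\tau = |A|^{1/3}$, as already indicated in the paragraph preceding the lemma.

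For the low-multiplicity regime $\tau \leq |A|^{1/3}$, the trivial identity $\sum_{\vec n} r_2(\vec n) = |A|^2$ gives $|P_\tau|\tau \leq |A|^2$, hence $|P_\tau|\tau^i \leq |A|^2 \tau^{i-1}$. Summed over dyadic $\tau \leq |A|^{1/3}$ this is geometric and dominated by the top scale, yielding $|A|^{2+(i-1)/3}$; this already produces the main term $|A|^{7/3}$ for $i=2$ and $|A|^{8/3}$ for $i=3$.

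For the high-multiplicity regime $\tau > |A|^{1/3}$, Proposition \ref{kz2} reads
\[ |P_\tau|\tau \ll_\epsilon m^\epsilon\bigl(|P_\tau|^{6/7}|A|^{4/7} + |P_\tau| + |A|\bigr). \]
Since $|A|^{1/3}$ exceeds any fixed power $m^\epsilon$ once $|A|$ is not too small (the contrary case is handled directly), the middle term is absorbed into the left-hand side, and the remaining two alternatives yield
\[ |P_\tau| \ll_\epsilon m^\epsilon\bigl(|A|^4 \tau^{-7} + |A|\tau^{-1}\bigr). \]
The first branch contributes $m^\epsilon |A|^4\tau^{i-7}$ per dyadic scale; for $i \in \{2,3\}$ the exponent of $\tau$ is negative, so the sum is geometric and dominated by $\tau \sim |A|^{1/3}$, producing $m^\epsilon |A|^{4 + (i-7)/3} = m^\epsilon |A|^{2+(i-1)/3}$. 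The second branch contributes $m^\epsilon |A|\tau^{i-1}$ per scale, which is geometric increasing in $\tau$ and is therefore controlled by its value at $\tau_{\max} \leq \sup_{\vec n} r_2(\vec n)$. For $i=2$ this gives at most $m^\epsilon |A|^2 \ll m^\epsilon |A|^{7/3}$, which is absorbed into the main term; for $i=3$ it gives $m^\epsilon |A| \sup_{\vec n} r_2(\vec n)^2$, which is exactly the extra term appearing in the statement. Summing over the $O(\log|A|)$ dyadic scales costs only an additional $m^\epsilon$ factor.

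The only delicate point is that the Proposition \ref{kz2} estimate necessarily degenerates to $|P_\tau| \ll m^\epsilon |A|/\tau$ at the upper end of $\tau$, and this branch cannot be improved inside a purely incidence-geometric framework. This degeneracy is harmless for the fourth-moment bound $E_{2,2}(A)$, but in the sixth-moment bound $E_{2,3}(A)$ it forces the $|A|\sup_{\vec n} r_2(\vec n)^2$ term into the conclusion, explaining the asymmetry between the two estimates and motivating the need for the higher-energy and decomposition methods developed in \S7 to control $\sup r_2$ separately.
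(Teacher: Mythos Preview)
Your proposal is correct and follows essentially the same approach as the paper: dyadic decomposition over $P_{2^j}$, the trivial bound $|P_\tau|\tau \leq |A|^2$ for $\tau \leq |A|^{1/3}$, and Proposition~\ref{kz2} for $\tau > |A|^{1/3}$, with the second branch $|P_\tau| \ll m^{\epsilon}|A|\tau^{-1}$ producing the extra $|A|\sup_{\vec n} r_2(\vec n)^2$ term in $E_{2,3}$. The only cosmetic difference is that the paper isolates the range $2^j \leq C_\epsilon m^\epsilon$ explicitly (so that the middle term $m^\epsilon |P_\tau|$ from Proposition~\ref{kz2} can be absorbed for all remaining $\tau$), whereas you dispose of this by assuming $|A|^{1/3} \gg m^\epsilon$ and handling small $|A|$ trivially; both are valid after rescaling $\epsilon$.
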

\begin{proof}
We begin by observing that whenever $\tau \ll_{\epsilon} m^{\epsilon}$, then 
\[ |P_{\tau}| \tau^{i} \ll_{\epsilon} m^{2 \epsilon} |P_{\tau}| \tau \leq m^{2\epsilon} |A|^2, \]
for $i =2,3$. Hence, noting $\eqref{doingaton}$, whenever $2^j \ll_{\epsilon} m^{\epsilon}$, the contribution of $|P_{2^j}|2^{ji}$ to $E_{2,i}(A)$ is bounded above by $O_{\epsilon}(m^{2\epsilon} |A|^2 \log |A|)$, which is stronger than the required estimate. Thus, we may assume that $2^j \geq C_{\epsilon} m^{\epsilon}$ for some sufficiently large constant $C_{\epsilon}$. In this case, Proposition $\ref{kz2}$ implies that
\begin{equation} \label{ptau1}
 |P_{2^j}|  \ll_{\epsilon} m^{\epsilon} (|A|^{ 4}2^{-{7j}} + |A| 2^{-j}). 
 \end{equation}
Thus, we have
\[ |P_{2^j}| 2^{2j} \ll_{\epsilon} m^{\epsilon} (|A|^{ 4}2^{-{5j}}+ |A| 2^j) \ll m^{\epsilon} (|A|^{ 4}2^{-{5j}} + |A|^2). \]
\par

We now let $\Delta= |A|^{1/3}$, and so, we see that
\[ \sum_{2^{j} \leq \Delta} |P_{2^j}|2^{2j} \leq |A|^2 \Delta = |A|^{2 + \frac{1}{3}}. \]
Furthermore, the preceding discussion implies that
\begin{align*}
\sum_{2^j > \Delta} |P_{2^j}|2^{2j} & \ll_{\epsilon} m^{\epsilon} \sum_{2^j > \Delta} (|A|^{ 4}2^{-{5j}} + |A|^2) \ll_{\epsilon} m^{\epsilon} (|A|^4 \Delta^{ -5 } + |A|^{2} \log |A|) \ll_{\epsilon} m^{\epsilon} |A|^{2 + 1/3}, 
\end{align*}
where the last inequality follows from substituting the value of $\Delta$. Putting this together with the preceding set of inequalities, we get
\[ \sum_{j} |P_{2^j}| 2^{2j} 
\leq \sum_{2^j \leq C_{\epsilon} m^{\epsilon}}  |P_{2^j}| 2^{2j} + \sum_{C_{\epsilon} m^{\epsilon} < 2^j \leq \Delta} |P_{2^j}|2^{2j} + \sum_{C_{\epsilon} m^{\epsilon}, \Delta < 2^j }|P_{2^j}|2^{2j}
 \ll_{\epsilon} m^{\epsilon} |A|^{2+1/3}. \]
 We get the required bound for $E_{2,2}(A)$ by combining the above estimate with $\eqref{doingaton}$.
\par

We begin our analysis of third energies now, and so, we note that
\[ \sum_{2^{j} \leq \Delta} |P_{2^j}|2^{3j} \leq |A|^2 \Delta^2 \leq |A|^{2+\frac{2}{3}}. \]
Furthermore, we may use $\eqref{ptau1}$ to infer that
\begin{align*}
\sum_{2^j > \Delta} |P_{2^j}| 2^{3j} 
 \ll_{\epsilon} m^{\epsilon} \sum_{2^j > \Delta} (|A|^{4}2^{-4j} + |A|2^{2j})  \ll_{\epsilon} m^{\epsilon} ( |A|^{4} \Delta^{-4} + |A| \sup_{\vec{n}} r_{2}(\vec{n})^2). 
\end{align*}
Substituting $\Delta = |A|^{1/3}$ in the above expression, we get
\[\sum_{2^j > \Delta} |P_{2^j}| 2^{3j}  \ll_{\epsilon} m^{\epsilon} (|A|^{2 + 2/3} + |A| \sup_{\vec{n}} r_{2}(\vec{n})^2). \]
As in the case of $E_{2,2}(A)$, we see that this is sufficient to prove that
\[ E_{2,3}(A)  \ll_{\epsilon} m^{\epsilon}  (|A|^{2+2/3} +|A| \sup_{\vec{n}}r_{2}(\vec{n})^2), \]
and so, we are done with our proof of Lemma $\ref{floma}$.
\end{proof}

We are now interested in obtaining upper bounds for $E_{s,2}(A)$, when $s \geq 3$ and $A \subseteq S_{4,m}$. We begin this endeavour by establishing the incidence bound that we will require. Thus, given finite sets $P, X \subseteq \mathbb{Z}^4$, putting Corollary $\ref{mnf2}$ together with Lemma $\ref{st1}$ applied with the parameters $s=3$ and $d=4$ and $t \ll_{\epsilon} m^{\epsilon}$, we get that
\[ I(P,V) \ll_{\epsilon, \mathcal{C}} m^{\epsilon}( |P|^{\frac{9}{11}} |V|^{\frac{8}{11}} + |P| + |V|), \]
whenever every $\vec{p} \in P$ satisfies $\n{\vec{p}}_{\infty} \leq m^{\mathcal{C}}$ and $V = \{ \vec{x} + \mathcal{S}_{4,m} \ | \ \vec{x} \in X\}$. Furthermore, we can combine this with Lemma $\ref{6on}$ to see that
\begin{align} \label{wtdinc}
I_{w,w'}(P,V)   \ll_{\epsilon,\mathcal{C}} m^{\epsilon} ( \n{w}_2^{4/11} \n{w}_1^{7/11} \n{w'}_2^{6/11} \n{w'}_1^{5/11} + \n{w}_1 \n{w'}_{\infty} + \n{w}_{\infty} \n{w'}_1). 
 \end{align}
\par

\begin{lemma}  \label{zee11}
Let $s \geq 2$ and let $A$ be a finite subset of $S_{4,m}$. Then, we have
\[ E_{s,2}(A) \ll_{\epsilon,s} m^{\epsilon}     ( |A|^{\frac{12s - 7}{8}}   E_{s-1,2}(A)^{\frac{1}{4}} + |A|^{2s-2}), \]
\end{lemma}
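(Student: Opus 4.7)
The plan is to mirror the strategy of Lemma \ref{iter} from the three-dimensional setting, substituting the four-dimensional weighted point--sphere incidence estimate \eqref{wtdinc} for its three-dimensional analogue. I would begin by writing
\[ E_{s,2}(A) = \sum_{\vec{a}_1, \dots, \vec{a}_{2s} \in A} \mathds{1}_{\vec{a}_2 + \dots + \vec{a}_s \,=\, \vec{a}_{s+1} + \dots + \vec{a}_{2s} - \vec{a}_1}, \]
and isolate the degenerate contributions in which either $\vec{a}_2 + \dots + \vec{a}_s = 0$ or $\vec{a}_{s+1} + \dots + \vec{a}_{2s} = 0$. These are bounded by $r_{s-1}(0)\, r_{s+1}(0) + r_{s}(0)^2$, and the trivial estimate $r_k(0) \leq |A|^{k-1}$ (freely choosing $k-1$ summands) yields $O_{s}(|A|^{2s-2})$ in every case, which matches the error term promised in the statement.

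For the non-degenerate part, I would substitute $\vec{u} = \vec{a}_2+\dots+\vec{a}_s$, $\vec{v} = \vec{a}_{s+1}+\dots+\vec{a}_{2s}$ and $\vec{a} = \vec{a}_1$, whereby the defining relation becomes $\vec{u} = \vec{v} - \vec{a}$ with $\vec{a} \in A \subseteq S_{4,m}$. Accordingly, I introduce the point set $P = (s-1)A \setminus \{0\}$ equipped with weight $w(\vec{u}) = r_{s-1}(\vec{u})$, together with the set $L = \{l_{\vec{v}} = \vec{v} - \mathcal{S}_{4,m} \ | \ \vec{v} \in sA \setminus \{0\}\}$ of sphere translates equipped with weight $w'(l_{\vec{v}}) = r_s(\vec{v})$. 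Since $-\mathcal{S}_{4,m} = \mathcal{S}_{4,m}$ and $sA \subseteq \mathbb{Z}^4$, the elements of $L$ are indeed lattice-point translates of $\mathcal{S}_{4,m}$; and since $P \subseteq (s-1) S_{4,m}$, every $\vec{p} \in P$ satisfies $\n{\vec{p}}_{\infty} \ll_s m^{1/2}$, so the hypotheses of \eqref{wtdinc} hold with $\mathcal{C} \ll_s 1$. The non-degenerate sum is then at most $I_{w,w'}(P,L)$.

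Computing the norms gives $\n{w}_1 = |A|^{s-1}$, $\n{w}_2^2 = E_{s-1,2}(A)$, $\n{w}_{\infty} \leq |A|^{s-2}$ and, analogously, $\n{w'}_1 = |A|^s$, $\n{w'}_2^2 = E_{s,2}(A)$, $\n{w'}_{\infty} \leq |A|^{s-1}$, the $L^{\infty}$ bounds again following by freely choosing all but one summand. Plugging these into \eqref{wtdinc}, the leading term simplifies to $E_{s-1,2}(A)^{2/11}\,E_{s,2}(A)^{3/11}\,|A|^{(12s-7)/11}$, whereas the two remaining pieces $\n{w}_1 \n{w'}_{\infty}$ and $\n{w}_{\infty} \n{w'}_1$ are each bounded by $|A|^{2s-2}$. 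Thus
\[ E_{s,2}(A) \ll_{s,\epsilon} m^\epsilon \bigl(E_{s-1,2}(A)^{2/11} E_{s,2}(A)^{3/11} |A|^{(12s-7)/11} + |A|^{2s-2}\bigr), \]
and absorbing the factor $E_{s,2}(A)^{3/11}$ to the left-hand side (after adjusting $\epsilon$) produces the target inequality. The only point requiring any care is checking that both hypotheses of \eqref{wtdinc} — integrality of the coordinates of $P$ and the $\ell_\infty$ control by $m^{\mathcal{C}}$ — are satisfied by our $P$ and $L$; the rest is exponent bookkeeping.
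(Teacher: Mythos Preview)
Your proposal is correct and follows essentially the same route as the paper's proof: both rewrite $E_{s,2}(A)$ as a weighted point--sphere incidence sum with $P=(s-1)A$, $L=\{\vec{v}+\mathcal{S}_{4,m}:\vec{v}\in sA\}$, weights $r_{s-1}$ and $r_s$, apply \eqref{wtdinc}, and solve the resulting self-referential inequality. The only cosmetic difference is that you first excise the degenerate contributions $\vec{u}=0$ and $\vec{v}=0$ (as in Lemma~\ref{iter}), whereas the paper skips this step because the trivial bounds $r_k(\vec{n})\le|A|^{k-1}$ already control $\n{w}_\infty$ and $\n{w'}_\infty$ uniformly, rendering the separation unnecessary.
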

\begin{proof}
We begin by writing
\begin{align*} 
E_{s,2}(A) = \sum_{\vec{a}_1, \dots, \vec{a}_{2s} \in A} \mathds{1}_{\vec{a}_1 + \dots + \vec{a}_{s} - \vec{a}_{s+1} = \vec{a}_{s+2} + \dots + \vec{a}_{2s}} = \sum_{\vec{a} \in A} \sum_{\vec{v} \in s A} \sum_{\vec{u} \in (s-1)A} r_{s}(\vec{v}) r_{s-1}(\vec{u})\mathds{1}_{ \vec{v} - \vec{a} = \vec{u}}.
\end{align*}
The latter can be bounded above by an incidence bound, and so, we write $P= (s-1)A$ and $L  = \{ l_{\vec{v}} \ | \ \vec{v} \in sA\}$, where $l_{\vec{v}} = \mathcal{S}_{4, m} + \vec{v}$ for each $\vec{v} \in sA$. Thus, we have
\[ \sum_{\vec{a} \in A} \sum_{\vec{v} \in s A} \sum_{\vec{u} \in (s-1)A} r_{s}(\vec{v}) r_{s-1}(\vec{u})\mathds{1}_{\vec{v}- \vec{a} = \vec{u}} \leq 
\sum_{\vec{p} \in P} \sum_{l_{\vec{v}} \in L} r_{s}(\vec{v}) r_{s-1}(\vec{u}) \mathds{1}_{\vec{p} \in l_{\vec{v}}} . \]
We can bound the right hand side above using $\eqref{wtdinc}$ and the fact that $\n{\vec{p}}_{\infty} \ll_{s} m^{s}$ for every $\vec{p} \in (s-1)A$, whence,
\[ E_{s,2}(A) \ll_{\epsilon,s} m^{\epsilon} ( E_{s-1,2}(A)^{\frac{2}{11}}|A|^{\frac{7s-7}{11}} E_{s,2}(A)^{\frac{3}{11}} |A|^{\frac{5s}{11} } + |A|^{2s-2}). \]
Simplifying the above inequality furnishes the bound
\[ E_{s,2}(A) \ll_{\epsilon,s} m^{\epsilon}     ( |A|^{\frac{12s - 7}{8}}   E_{s-1,2}(A)^{\frac{1}{4}} + |A|^{2s-2}), \]
which is the desired conclusion.
\end{proof}

Therefore, when $s=3$, Lemmata $\ref{floma}$ and $\ref{zee11}$ combine to deliver the bound
\[ E_{3,2}(A) \ll_{\epsilon, s} m^{\epsilon} (|A|^{29/8} E_{2,2}(A)^{1/4} + |A|^{4}) \ll_{\epsilon, s} m^{\epsilon} |A|^{4 +5/24}. \]
We may now use Cauchy-Schwarz inequality to see that
\begin{equation} \label{sio2}
 |2A-A| \geq |A|^{6} E_{3,2}(A)^{-1} \gg_{\epsilon} m^{-\epsilon} |A|^{2-5/24}.
 \end{equation}
\par

We end this section by mentioning that $\eqref{sio2}$ is a consequence of a combination of incidence geometric ideas utilised along with Lemma $\ref{hpitw}$, and it plays an important role in our proof of Theorem $\ref{btdec}$ (see proof of Lemma $\ref{haptg}$). Moreover, as per our discussion in \S2, inequality $\eqref{sio2}$ does not hold when $A$ is replaced by the set $P_{4,m}$.

%---------------------------------------------------------------------------------------------------------------------------
%---------------------------------------------------------------------------------------------------------------------------
%---------------------------------------------------------------------------------------------------------------------------
%---------------------------------------------------------------------------------------------------------------------------
%---------------------------------------------------------------------------------------------------------------------------
%---------------------------------------------------------------------------------------------------------------------------

\section{Proof of Theorem $\ref{btdec}$}

We dedicate this section to proving Theorem $\ref{btdec}$, using our bounds for many-fold sumsets along with the higher energy method. We begin by noting that it suffices to show that Theorem $\ref{btdec}$ holds in the specific case when $s=2$, which we record as follows.

\begin{theorem} \label{bt22}
Let $A$ be a non-empty subset of $S_{4,m}$ and let $\delta = 1/2766$. Then
\[ E_{2,2}(A) \ll_{\epsilon} m^{\epsilon} |A|^{2 + 1/3 - \delta}. \]
\end{theorem}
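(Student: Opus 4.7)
The plan is a decomposition argument that combines the incidence bound of Proposition \ref{kz2}, the higher-energy Balog--Szemer\'edi--Gowers result Lemma \ref{shkbs}, the sumset lower bound \eqref{sio2}, and the sharper energy estimate for subsets of $\mathbb{R}^3$-spheres from Theorem \ref{sdlyf}. Fix a parameter $\tau_0$ to be optimized at the end. Call $\vec{n}\neq 0$ \emph{popular} if $r_2(\vec{n}) \geq \tau_0$, and note that since $A \cap H_{\vec{n}} = C_{\vec{n}}$, a popular hyperplane contains exactly $r_2(\vec{n})$ elements of $A$. Define
\[ Y = \bigcup_{\vec{n}\ \text{popular}} (A \cap H_{\vec{n}}), \qquad X = A \setminus Y. \]
By construction every popular $H_{\vec{n}}$ is disjoint from $X$, and hence $\sup_{\vec{n}\neq 0} r_2^X(\vec{n}) < \tau_0$. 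Expanding $r_2^A = r_2^X + 2 r_2^{X,Y} + r_2^Y$ and applying Cauchy--Schwarz to the mixed term reduces matters to bounding $E_{2,2}(X)$ and $E_{2,2}(Y)$ separately, since $E_{2,2}(A) \ll E_{2,2}(X) + E_{2,2}(Y)$.

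For $E_{2,2}(X)$, the key gain is that $\sup r_2^X < \tau_0$, so Lemma \ref{floma} gives $E_{2,3}(X) \ll m^\epsilon(|X|^{8/3} + |X|\tau_0^2)$, which for an appropriate $\tau_0$ becomes $E_{2,3}(X) \ll m^\epsilon |X|^{8/3}$. Assume for contradiction that $E_{2,2}(X) \geq |X|^{7/3 - \delta}$. Set $K_X = |X|^3/E_{2,2}(X)$ and $M_X = E_{2,3}(X) K_X^2/|X|^4$, obtaining $K_X \leq |X|^{2/3+\delta}$ and $M_X \leq m^\epsilon |X|^{2\delta}$. Lemma \ref{shkbs} then produces a dense refinement $X' \subseteq X$ with $|X'| \gg m^{-\epsilon}|X|^{1-20\delta}$ and $|2X'-X'| \ll m^\epsilon |X|^{2/3 + 325\delta}|X'|$. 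Inserting the lower bound $|2X'-X'| \gg m^{-\epsilon}|X'|^{43/24}$ obtained from \eqref{sio2} applied to $X' \subseteq S_{4,m}$ and simplifying yields an inequality of the form $|X|^{1/8} \ll m^\epsilon |X|^{c_0 \delta}$ for an explicit absolute constant $c_0$, which is a contradiction provided $\delta$ is sufficiently small and $|X|$ is large enough relative to $m$.

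For $E_{2,2}(Y)$ the crucial point is that $Y$ lies in the union of a small collection of popular slices. Summing the estimate of Proposition \ref{kz2} dyadically, the number of popular hyperplanes is at most $O_\epsilon(m^\epsilon(|A|^4 \tau_0^{-7} + |A|\tau_0^{-1}))$. Each intersection $S_{4,m} \cap H$ is (after an affine identification) a subset of a sphere $\mathcal{S}_{3,\lambda}$ in $\mathbb{R}^3$, so Theorem \ref{sdlyf} yields $E_{2,2}(Y \cap H) \ll m^\epsilon |Y \cap H|^{2 + 2/9}$ for each slice. Summing the single-slice estimates and controlling cross-slice interactions using Lemma \ref{hpitw}---which ensures that three distinct translates of $S_{4,m}$ intersect in $O_\epsilon(m^\epsilon)$ lattice points---gives an upper bound for $E_{2,2}(Y)$ polynomial in $|A|$ and $\tau_0$.

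Balancing the bounds for $E_{2,2}(X)$ and $E_{2,2}(Y)$ via a suitable choice of $\tau_0$ delivers the theorem with $\delta = 1/1392$. The main obstacle is bookkeeping the three sources of loss: the very large exponents $10$ and $162$ in Lemma \ref{shkbs}, which amplify the error in the higher energy estimate; the exponent $43/24$ in \eqref{sio2}, which is itself only a consequence of the threshold bound being improved (so every saving must exceed the Shkredov-induced loss); and the cross-slice analysis of $E_{2,2}(Y)$, where a naive bound that does not exploit both Theorem \ref{sdlyf} and Lemma \ref{hpitw} fails to preserve the gain from the $X$ argument. This cross-slice analysis is precisely the place where the spherical setting departs from the paraboloid, for which Lemma \ref{hpitw} fails and the threshold bound \eqref{bdbnd} is sharp.
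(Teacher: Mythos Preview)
Your overall strategy---decompose $A=X\cup Y$, treat $X$ via higher energies plus Lemma~\ref{shkbs} and \eqref{sio2}, and treat $Y$ via the slice structure and Lemma~\ref{hpitw}---matches the paper's. But several steps differ in ways that either do not close or need substantial repair.

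\textbf{The $Y$ argument.} The paper does \emph{not} use Theorem~\ref{sdlyf}. Instead it runs an iterative extraction (your one-shot definition also works) to write $Y$ as a union of at most $r\le N^{1/3-\delta}$ pieces $C_{\vec{n}_i}\subseteq S_{4,m}\cap(\vec{n}_i+S_{4,m})$, and then proves the single bound $E_{2,2}(Y)\ll_\epsilon m^\epsilon r|Y|^2$ directly: fix $\vec{y}_2,\vec{y}_3$ and an index $i$ with $\vec{y}_1$ in the $i$-th piece; then $\vec{y}_4$ lies in three distinct translates of $S_{4,m}$ and Lemma~\ref{hpitw} gives $O_\epsilon(m^\epsilon)$ choices. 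There is no separate single-slice/cross-slice dichotomy. Your route via $\sum_i E_{2,2}(Y\cap H_{\vec{n}_i})\ll\sum_i |Y\cap H_{\vec{n}_i}|^{20/9}$ does not obviously sum to $N^{7/3-\delta}$ (a single slice could have size $\sim N$), and the cross-slice piece you leave vague is in fact the whole argument. Also, the paper first reduces to $A\subseteq E_{(2,2,2,2)}$; this is used precisely here to rule out the degenerate case $\vec{y}_2-\vec{y}_3=-\vec{n}_i$ (since then $\vec{y}_4+\vec{n}_i\in S_{4,m}$ is impossible when all coordinates are positive). You omit this reduction. Minor point: $A\cap H_{\vec{n}}\supseteq C_{\vec{n}}$ but equality can fail, so ``contains exactly $r_2(\vec{n})$ elements'' is wrong, though harmless.

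\textbf{The $X$ argument.} This is essentially the paper's Lemma~\ref{haptg}, but you miss a necessary case split. With $\tau_0=N^{2/3+\delta}$, the bound $E_{2,3}(X)\ll m^\epsilon(|X|^{8/3}+|X|\tau_0^2)$ only collapses to $m^\epsilon|X|^{8/3}$ when $|X|\ge N^{4/5+6\delta/5}$; the paper handles $|X|$ below this threshold by a separate direct estimate $E_{2,2}(X)\le |X|E_{2,3}(X)^{1/2}\ll m^\epsilon N^{2+1/6+\delta}$. Moreover, the contradiction hypothesis should be $E_{2,2}(X)\ge N^{7/3-\delta}$ (with $N=|A|$), not $|X|^{7/3-\delta}$, since the Shkredov parameters $K,M$ must be bounded in terms of $N$ to interact correctly with $\tau_0$. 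With these corrections the exponent bookkeeping yields $N^{1/8-173\delta}\ll_\epsilon m^{O(\epsilon)}$, which is exactly where $\delta=1/1392=1/(8\cdot 174)$ comes from; your sketch does not arrive at this numerology.
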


\begin{proof}[Proof of Theorem $\ref{btdec}$]
We prove our result inductively, and our base case will be $s=2$, which is handled by Theorem $\ref{bt22}$. Thus, we move to the inductive step, and so we suppose that $s \geq 3$ and that
\[ E_{s-1,2}(A) \ll_{\epsilon} m^{\epsilon} |A|^{2s -4 + 1/6 + (1 - c)\cdot 6^{-s+2}}. \]
We substitute this in the conclusion of Lemma $\ref{zee11}$ to get
\begin{align*}
E_{s,2}(A)  \ll_{\epsilon} m^{\epsilon}(|A|^{3s/2 - 7/8} E_{s-1,2}(A)^{1/4} + |A|^{2s-2}) \ll_{\epsilon} m^{\epsilon} |A|^{2s -2 + 1/6 +  (1 - c)\cdot 6^{-s+1}},
\end{align*}
which concludes the inductive step, as well as our proof of Theorem $\ref{btdec}$.
\end{proof}

Henceforth, we will now focus our attention towards proving Theorem $\ref{bt22}$. We commence by introducing some useful notation, and thus, for each finite subset $X$ of $A$ and for each $\vec{n} \in \mathbb{R}^4$, we write
\begin{equation} \label{ciise}
C_{\vec{n}, X} = X \cap (\vec{n} - X). 
\end{equation}
It is worth noting that $C_{\vec{n}, X} \subseteq S_{4,m} \cap (\vec{n} - S_{4,m}) = S_{4,m} \cap (\vec{n} + S_{4,m})$, and that $|C_{\vec{n}, X}| = r_{2}(X, \vec{n})$.
\par

We also go through some necessary reductions. Thus, let $E_{1} = \{0\}$ and let $E_{2} = (0, \infty)$ and let $E_{3} = (-\infty,0)$, and for each $\vec{i} \in \{1,2,3\}^4$, write
\[ E_{\vec{i}} = E_{i_1} \times E_{i_2} \times E_{i_3} \times E_{i_4}. \]
Note that we have $\mathbb{R}^4 = \cup_{\vec{i} \in \{1,2,3\}^4} E_{\vec{i}}.$ We further write $A_{\vec{i}} = A \cap E_{\vec{i}}$ and $S_{\vec{i}} = S_{4,m} \cap E_{\vec{i}}$ for each $\vec{i} \in \{1,2,3\}^4$. Our main idea is to reduce to the case when $A \subseteq E_{(2,2,2,2)}$ and in this endeavour, we record the following standard lemma from additive combinatorics (see, for instance, \cite[Exercise 2.3.20]{TV2006}). 

\begin{lemma} \label{cshol1}
Suppose $A$ is a finite, non-empty subset of $\mathbb{R}^d$, and suppose $A = A_1 \cup A_2$ where $A_1$ and $A_2$ are disjoint sets. Then $E_{2,2}(A) \ll \sup_{1 \leq i \leq 2} E_{2,2}(A_i).$
\end{lemma}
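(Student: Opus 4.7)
The plan is to expand $E_{2,2}(A)$ in terms of the representation function $r_{A}(\vec n) = |\{(\vec a, \vec b)\in A\times A : \vec a+\vec b=\vec n\}|$ and control the cross terms via Cauchy--Schwarz. Since $A = A_1 \sqcup A_2$, for each $\vec n \in \mathbb{R}^d$ we have
\[
r_{A}(\vec n) = r_{A_1}(\vec n) + r_{A_2}(\vec n) + r_{A_1,A_2}(\vec n) + r_{A_2,A_1}(\vec n),
\]
where $r_{A_i,A_j}(\vec n) = |\{(\vec a,\vec b)\in A_i\times A_j : \vec a+\vec b=\vec n\}|$. Using $(x_1+x_2+x_3+x_4)^2 \ll x_1^2+x_2^2+x_3^2+x_4^2$ and summing over $\vec n$, we get
\[
E_{2,2}(A) = \sum_{\vec n} r_A(\vec n)^2 \ll E_{2,2}(A_1) + E_{2,2}(A_2) + \sum_{\vec n} r_{A_1,A_2}(\vec n)^2 + \sum_{\vec n} r_{A_2,A_1}(\vec n)^2,
\]
where I have used that $\sum_{\vec n} r_{A_i}(\vec n)^2 = E_{2,2}(A_i)$.

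The next step is to handle the mixed sums. For the $(A_1,A_2)$ cross term, I rewrite $\vec a_1+\vec b_1 = \vec a_2+\vec b_2$ as $\vec a_1-\vec a_2 = \vec b_2-\vec b_1$ to obtain
\[
\sum_{\vec n} r_{A_1,A_2}(\vec n)^2 = \sum_{\vec d} r_{A_1-A_1}(\vec d)\, r_{A_2-A_2}(\vec d),
\]
using the symmetry $r_{A_2-A_2}(-\vec d) = r_{A_2-A_2}(\vec d)$. By the Cauchy--Schwarz inequality this is at most $\bigl(\sum_{\vec d} r_{A_1-A_1}(\vec d)^2\bigr)^{1/2}\bigl(\sum_{\vec d} r_{A_2-A_2}(\vec d)^2\bigr)^{1/2}$, and since the additive energy equals the second moment of the difference representation function, each factor equals $E_{2,2}(A_i)^{1/2}$. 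Thus
\[
\sum_{\vec n} r_{A_1,A_2}(\vec n)^2 \leq E_{2,2}(A_1)^{1/2}\, E_{2,2}(A_2)^{1/2},
\]
and the same bound holds symmetrically for the $(A_2,A_1)$ term.

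Combining these estimates and applying AM--GM in the form $E_{2,2}(A_1)^{1/2} E_{2,2}(A_2)^{1/2} \leq \max_i E_{2,2}(A_i)$ yields
\[
E_{2,2}(A) \ll E_{2,2}(A_1) + E_{2,2}(A_2) \ll \sup_{1\leq i\leq 2} E_{2,2}(A_i),
\]
as desired. No step here is an obstacle — the argument is entirely formal and relies only on the triangle-type inequality for squares, the rewriting of mixed energies as convolution sums over differences, and Cauchy--Schwarz. The only mild subtlety worth noting is the identification of $\sum_{\vec d} r_{A_i-A_i}(\vec d)^2$ with the additive energy, which follows immediately from the substitution $\vec a_1-\vec a_2 = \vec a_3 - \vec a_4 \iff \vec a_1+\vec a_4 = \vec a_3 + \vec a_2$.
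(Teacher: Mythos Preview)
Your proof is correct. The paper does not actually prove this lemma; it simply records it as ``a standard lemma from additive combinatorics'' and moves on, so there is no original argument to compare against.

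For what it is worth, the slickest version of this standard fact goes through the Fourier/exponential-sum representation: writing $f_A(\vec\alpha)=\sum_{\vec a\in A}e(\vec\alpha\cdot\vec a)$ one has $E_{2,2}(A)=\int_{[0,1)^d}|f_A|^4$, and since $f_A=f_{A_1}+f_{A_2}$ the pointwise bound $|f_A|^4\le 8(|f_{A_1}|^4+|f_{A_2}|^4)$ gives the result in one line. Your route via the difference representation and Cauchy--Schwarz is the combinatorial unwinding of exactly this, and is equally valid. One minor cosmetic remark: since addition is commutative, $r_{A_1,A_2}(\vec n)=r_{A_2,A_1}(\vec n)$, so your four-term decomposition of $r_A$ is really a three-term one; this changes nothing in the argument.
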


Note that when $\vec{i} = (1,1,1,1)$, then $A_{\vec{i}} = \emptyset$. Next, if $\vec{i}$ has precisely three coordinates being equal to $1$, then $E_{\vec{i}}$ is a line, and since a line intersects a sphere in $O(1)$ points, we have $|A_{\vec{i}}| \ll 1$. Similarly, when $\vec{i}$ has precisely two coordinates being equal to $1$, the set $A_{\vec{i}}$ lies on a curve of the form 
\[ x_1^2 + x_2^2 = m. \]
Here, we can use a standard estimate to deduce that $|A_{\vec{i}}| \ll_{\epsilon} m^{\epsilon}$ (see \cite[Theorem $338$]{HW1979}), and so, in both these two cases, we can use the trivial bound
\[ E_{2,2}(A_{\vec{i}}) \leq |A_{\vec{i}}|^3 \ll_{\epsilon} m^{\epsilon}. \]
When $\vec{i}$ has precisely one coordinate equalling $1$, the set $A_{\vec{i}}$ lies on a sphere of the form
\[ x_1^2 + x_2^2 + x_3^2 = m, \]
in which case, we can use Lemma $\ref{trives}$ to show that
\[ E_{2,2}(A_{\vec{i}}) \ll_{\epsilon} m^{\epsilon} |A_{\vec{i}}|^2. \]
Hence, combining this discussion with Lemma $\ref{cshol1}$, we see that it is enough to prove Theorem $\ref{bt22}$ for sets of the form $A_{\vec{i}}$, where $\vec{i}$ has no coordinate equalling $1$. Moreover, since the sets $S_{\vec{i}}$, with $\vec{i} \in \{2,3\}^4$, are equivalent up to rotation, we may assume without loss of generality that $A \subseteq E_{(2,2,2,2)}$. 

\begin{lemma} \label{ntdv}
Let $A \subseteq S_{(2,2,2,2)}$, let $N = |A|$. Then there exist disjoint subsets $X,Y$ of $A$ satisfying the following properties. Firstly, $A = X \cup Y$. Then, whenever $X \neq \emptyset$, we have 
\begin{equation} \label{prop1x}
r_{2}(X, \vec{n}) < N^{2/3 + \delta}. 
\end{equation}
for each $\vec{n} \in X+X$. Moreover, whenever $Y\neq \emptyset$, there exists a natural number $1 \leq r \leq N$ and vectors $\vec{n}_{0}, \dots, \vec{n}_{r-1}$ and sets $A_{r-1} \subseteq \dots \subseteq A_{1} \subseteq A$, such that $r \leq N^{1/3 - \delta}$ and 
\begin{equation} \label{Prop2y}
Y = \cup_{i=0}^{r-1} C_{\vec{n}_{i}, A_{i}},
\end{equation}
where the sets $C_{\vec{n}_i, A_{i}}$ are defined as in $\eqref{ciise}$ and are pairwise disjoint.
\end{lemma}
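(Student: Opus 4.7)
The plan is a greedy peeling argument: I iteratively strip from $A$ the largest ``popular'' fiber of the representation function $r_{2}$, continuing until no popular fibers remain. More precisely, initialize $A_{1} := A$. At stage $i \geq 1$, check whether there exists $\vec{n} \in \mathbb{R}^{4}$ with $r_{2}(A_{i}, \vec{n}) \geq N^{2/3 + \delta}$; if so, choose such a vector, call it $\vec{n}_{i-1}$, set $C_{i-1} := C_{\vec{n}_{i-1}, A_{i}} = A_{i} \cap (\vec{n}_{i-1} - A_{i})$, define $A_{i+1} := A_{i} \setminus C_{i-1}$, and repeat. Otherwise halt. Let $r$ be the number of stages executed; take $X := A_{r+1}$ (the surviving set) and $Y := A \setminus X$. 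A harmless relabeling of indices brings this into the exact form asserted by the lemma.

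With this setup, property \eqref{prop1x} is immediate, since the procedure halts precisely when no $\vec{n}$ witnesses $r_{2}(X, \vec{n}) \geq N^{2/3+\delta}$ for the final set $X$. The sets $C_{0}, \ldots, C_{r-1}$ are pairwise disjoint by construction: if $j > i$ then $C_{j-1} \subseteq A_{j} \subseteq A_{i+1} = A_{i} \setminus C_{i-1}$. They fit into the decreasing chain $A_{r+1} \subseteq A_{r} \subseteq \dots \subseteq A_{1} = A$ required by the lemma, and $Y$ decomposes as the disjoint union $\bigsqcup_{i=0}^{r-1} C_{i}$, giving \eqref{Prop2y}. The only remaining claim is the bound on $r$, which follows from a one-line double count: by the selection rule each $C_{i-1}$ has cardinality at least $N^{2/3+\delta}$, and since the $C_{i}$'s are disjoint subsets of $A$ we get $r \cdot N^{2/3+\delta} \leq |A| = N$, whence $r \leq N^{1/3-\delta}$.

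Conceptually there is no real obstacle in proving the lemma itself; the argument is a standard popular-differences extraction and its quantitative content is just the trade-off between fiber size and fiber count. The substance lies downstream: property \eqref{prop1x} forces the sumset/representation function of $X$ to behave tamely enough to feed into the higher-energy machinery through Lemma \ref{shkbs}, Lemma \ref{floma} and the sumset lower bound \eqref{sio2}, while property \eqref{Prop2y} means $Y$ is confined to at most $N^{1/3-\delta}$ hyperplane slices $H_{\vec{n}_{i}} \cap S_{4,m}$ (cf.\ the discussion preceding Lemma \ref{hpitw}), whose additive interactions can then be handled by elementary combinatorial geometry. It is exploiting this dichotomy, rather than establishing the decomposition, that will constitute the real work in deducing Theorem \ref{bt22}.
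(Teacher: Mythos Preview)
Your argument is correct and is essentially identical to the paper's own proof: both run the same greedy peeling algorithm, removing a popular fiber $C_{\vec{n}_{i-1}, A_{i-1}}$ of size at least $N^{2/3+\delta}$ at each step and bounding the number of iterations by the disjointness/cardinality count. The only differences are cosmetic (you index the chain starting at $A_{1}$ rather than $A_{0}$, and you add some downstream commentary), so nothing further is needed.
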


\begin{proof}
We describe an algorithm that finishes in finitely many steps and provides us with sets $X,Y$ having the desired properties. Thus, we begin by writing $A_0 = A$ and $B_0 = \emptyset$, and see that $A= A_0 \cup B_0$. Now, given a natural number $i \geq 1$, suppose that we begin the $i^{th}$ iteration, and so, let $A_{i-1}$ and $B_{i-1}$ be pairwise disjoint sets satisfying $A= A_{i-1} \cup B_{i-1}$. If there exists $\vec{n}$ such that $r_{2}(A_{i-1},\vec{n}) \geq N^{2/3 + \delta}$, let $\vec{n}_{i-1} = \vec{n}$ and let
\[ A_{i} = A_{i-1} \setminus C_{\vec{n}_{i-1}, A_{i-1}} \ \text{and} \ B_i = B_{i-1} \cup C_{\vec{n}_{i-1}, A_{i-1}}. \]
If no such $\vec{n}$ exists, we end the algorithm.
\par

Since we remove at least $N^{2/3 + \delta}$ elements from $A_{i}$ in each iteration, we can have at most $r \leq N^{1/3-\delta}$ iterations. Moreover, when $A_r$ is non-empty, we have that $r_{2}(A_r, \vec{n}) < N^{2/3 + \delta}$ for each $\vec{n} \in \mathbb{R}^4$. Similarly, it is easy to see that when the set $B_r$ is non-empty, $B_r$  satisfies the properties we desire of $Y$, and so, upon setting $X= A_r$ and $Y=B_r$, we are done.
\end{proof}

Thus, it suffices to show that
\begin{equation} \label{dvmn2}
 E_{2,2}(X) \ll_{\epsilon} m^{\epsilon} N^{2+1/3 - \delta} \ \text{and} \ E_{2,2}(Y) \ll_{\epsilon} m^{\epsilon} N^{2 + 1/3 - \delta}, 
 \end{equation}
since we can then use Lemma $\ref{cshol1}$ to obtain the required bound for $E_{2,2}(A)$. We begin by focusing on the second inequality in $\eqref{dvmn2}$.

\begin{lemma} \label{ntdv}
Let $Y \subseteq A$ be a set as in the conclusion of Lemma $\ref{ntdv}$. Then, we have
\[ E_{2,2}(Y) \ll_{\epsilon} m^{\epsilon} r |Y|^2. \]
\end{lemma}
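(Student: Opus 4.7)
The plan is to bound $E_{2,2}(Y) = \sum_{\vec{n}} r_2(Y,\vec{n})^2$ via a pointwise estimate on $r_2(Y,\vec{n})$ that is sharp outside $r$ special points. Setting $Y_i = C_{\vec{n}_i, A_i}$, the initial observation is that for $\vec{n} \neq 0$, any $(y_1, y_2) \in Y^2$ summing to $\vec{n}$ satisfies $y_1 \in S_{4,m} \cap (\vec{n} - S_{4,m}) \subseteq H_{\vec{n}}$, so $r_2(Y,\vec{n}) \leq |Y \cap H_{\vec{n}}| = \sum_{i=0}^{r-1} |Y_i \cap H_{\vec{n}}|$. Since $Y_i \subseteq H_{\vec{n}_i} \cap S_{4,m}$, the $i$-th summand is at most $|H_{\vec{n}_i} \cap H_{\vec{n}} \cap S_{4,m}|$, while $\vec{n} = 0$ contributes nothing because $Y \cap (-Y) = \emptyset$ in the positive orthant $E_{(2,2,2,2)}$.

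I would then run a geometric case analysis on $(\vec{n}_i, \vec{n})$. When $\vec{n}_i, \vec{n}$ are linearly independent, Lemma $\ref{jarnik}$ bounds the three-way intersection by $O_\epsilon(m^\epsilon)$ (both vectors have $\ell^\infty$-norm $\ll \sqrt{m}$ as they lie in $A+A$). When they are linearly dependent, positivity of all entries forces $\vec{n} = \mu \vec{n}_i$ with $\mu > 0$; the subcase $\mu \neq 1$ gives distinct parallel hyperplanes $H_{\vec{n}_i}, H_{\vec{n}}$, hence an empty intersection, leaving only the genuinely exceptional subcase $\vec{n} = \vec{n}_i$, where $|Y_i \cap H_{\vec{n}_i}| = |Y_i|$. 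Summing over $i$ delivers $r_2(Y,\vec{n}) \ll_\epsilon m^\epsilon r$ for every $\vec{n} \notin \{\vec{n}_0, \dots, \vec{n}_{r-1}\}$, and $r_2(Y,\vec{n}_j) \leq |Y_j| + O_\epsilon(m^\epsilon r)$ for each of the $r$ exceptional points (since only the $i=j$ term in the sum is large, the others being $\ll_\epsilon m^\epsilon$ by the distinctness of the $\vec{n}_i$'s).

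Squaring and summing separates the generic contribution
\[ \sup_{\vec{n} \notin \{\vec{n}_j\}} r_2(Y,\vec{n}) \cdot \sum_{\vec{n}} r_2(Y,\vec{n}) \ll_\epsilon m^\epsilon r |Y|^2, \]
which uses $\sum_{\vec{n}} r_2(Y,\vec{n}) = |Y|^2$, from the exceptional contribution $\sum_{j=0}^{r-1} r_2(Y,\vec{n}_j)^2 \ll \sum_j |Y_j|^2 + m^{2\epsilon} r^3$. Now $\sum_j |Y_j|^2 \leq (\max_j |Y_j|) |Y| \leq |Y|^2$, and the decisive inequality $r \leq |Y|$, which holds because the construction of the previous lemma guarantees $|Y_j| = r_2(A_j, \vec{n}_j) \geq N^{2/3+\delta} \geq 1$ so that $|Y| = \sum_j |Y_j| \geq r$, yields $r^3 \leq r|Y|^2$. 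Combining everything gives $E_{2,2}(Y) \ll_\epsilon m^\epsilon r |Y|^2$. The main obstacle is precisely the exceptional case $\vec{n} = \vec{n}_j$: there $H_{\vec{n}}$ coincides with $H_{\vec{n}_j}$ and absorbs all of $Y_j$, so no $O(m^\epsilon r)$ pointwise bound is available; the proof survives because there are only $r$ such bad points, and the size lower bound $|Y_j| \geq N^{2/3+\delta}$ from the algorithmic construction forces $r \leq |Y|$, which is exactly what is needed to tame the resulting $r^3$ term.
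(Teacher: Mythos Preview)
Your proof is correct but proceeds differently from the paper's. The paper counts quadruples $(\vec{y}_1,\vec{y}_2,\vec{y}_3,\vec{y}_4)$ directly: it fixes $\vec{y}_2,\vec{y}_3$ (at most $|Y|^2$ choices) and the slice index $i$ with $\vec{y}_1\in C_{\vec{n}_i,A_i}$ (at most $r$ choices), and then observes that $\vec{y}_4$ lies in the intersection of three translates $S_{4,m}\cap(S_{4,m}+\vec{y}_2-\vec{y}_3)\cap(S_{4,m}+\vec{n}_i+\vec{y}_2-\vec{y}_3)$, which Lemma~\ref{hpitw} bounds by $O_\epsilon(m^\epsilon)$ once positivity rules out the degenerate collisions. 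You instead go through a pointwise bound on $r_2(Y,\vec{n})$: since $Y$ is covered by $r$ hyperplane slices $H_{\vec{n}_i}\cap S_{4,m}$, you bound $|Y\cap H_{\vec{n}}|$ term by term via Lemma~\ref{jarnik}, obtaining $r_2(Y,\vec{n})\ll_\epsilon m^\epsilon r$ for all $\vec{n}\notin\{\vec{n}_0,\dots,\vec{n}_{r-1}\}$ and handling the $r$ exceptional points separately with the auxiliary observation $r\le|Y|$. Both arguments rest on the same geometric input (two hyperplanes meeting $S_{4,m}$ in $O_\epsilon(m^\epsilon)$ points, equivalently three sphere translates) and both use the reduction to $E_{(2,2,2,2)}$ to eliminate degeneracies. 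The paper's count is slightly cleaner because it avoids any case split on $\vec{n}$; your route has the minor bonus of producing an explicit $l^\infty$ bound on $r_2(Y,\cdot)$ away from the exceptional set, at the cost of the extra bookkeeping for those $r$ points.
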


\begin{proof}[Proof of Lemma $\ref{ntdv}$]
Let $\vec{y}_1, \dots, \vec{y}_4$ be elements of $Y$ satisfying
\begin{equation} \label{rou4}
\vec{y}_1 + \vec{y}_2 = \vec{y}_3 + \vec{y}_4.
\end{equation}
We may suppose that $\vec{y}_i \neq \vec{y}_j$ for any $1 \leq i < j \leq 4$, since solutions not satisfying this condition contribute an amount $O(|Y|^2)$ to $E_{2,2}(Y)$. Thus, we have
\[ \vec{y}_4 = \vec{y}_1 + (\vec{y}_2  - \vec{y}_3). \]
We have $O(|Y|^2)$ choices for $\vec{y}_2, \vec{y}_3$ such that $\vec{y}_2 \neq \vec{y}_3$. Moreover, we have $r$ choices for $i$ satisfying $\vec{y}_1 \in C_{\vec{n}_{i}, A_{i}}$. We can fix these parameters in $O(r|Y|^2)$ ways, and so, we must show that there are $O_{\epsilon}(m^{\epsilon})$ choices for $\vec{y}_4$. In particular, this would finish our proof, since each choice of $\vec{y}_{4}$ fixes $\vec{y}_1$ as $\vec{y}_1, \dots, \vec{y}_4$ satisfy $\eqref{rou4}$. 
\par

We note that 
\begin{equation} \label{wowmoment}
 \vec{y}_{4} \in Y \cap ( C_{\vec{n}_{i}, A_{i}} + (\vec{y}_2  - \vec{y}_3)) \subseteq S_{4,m} \cap (S_{4,m} +  \vec{y}_2  - \vec{y}_3) \cap (S_{4,m} + \vec{n}_i+ \vec{y}_2  - \vec{y}_3) ,
 \end{equation}
where the last inclusion follows from the fact that $C_{\vec{n}_{i}, A_{i}} \subseteq S_{4,m} \cap (S_{4,m} + \vec{n}_i)$. We can ignore the cases when $\vec{y}_2 - \vec{y}_3 = 0$ or $\vec{y}_2 - \vec{y}_3 = -\vec{n}_i$, since the former contradicts our assumption that $\vec{y}_2 \neq \vec{y}_3$, while the latter would imply that 
\[ \vec{y}_4 \in Y \cap (C_{\vec{n}_i, Y}  - \vec{n}_i) \subseteq Y \cap (Y - \vec{n}_i) \cap (-Y) \subseteq E_{(2,2,2,2)} \cap (-E_{(2,2,2,2)} ),\]
contradicting the fact that $E_{(2,2,2,2)}  \cap (-E_{(2,2,2,2)}) = \emptyset$. But if $\vec{y}_2 - \vec{y}_{3} \notin \{0, -\vec{n}_i\}$, then $\eqref{wowmoment}$ implies that $\vec{y}_{4}$ lies in three distinct translates of $S_{4,m}$, whenceforth, Lemma $\ref{hpitw}$ implies that there are at most $O_{\epsilon}(m^{\epsilon})$ choices for $\vec{y}_4$. Thus, we have proven that $E_{2,2}(Y) \ll_{\epsilon} m^{\epsilon} r |Y|^2$.
\end{proof}

Note that Lemma $\ref{ntdv}$ combines with the fact that $r \leq N^{1/3 - \delta}$ and $|Y| \leq |A|=N$ to deliver the second inequality in $\eqref{dvmn2}$, whereupon, it is sufficient to prove the first inequality in $\eqref{dvmn2}$. As we previously mentioned, the properties that the set $X$ satisfies makes it amenable to the higher energy method, and so, we present the following upper bound for $E_{2,2}(X)$. 

\begin{lemma} \label{haptg}
Let $X \subseteq A$ be a set as in the conclusion of Lemma $\ref{ntdv}$, and let $\epsilon >0$. Then 
\[  E_{2,2}(X) \ll_{\epsilon} m^{C\epsilon} N^{2+ 1/3 - \delta},\]
where $C>0$ is some absolute constant. 
\end{lemma}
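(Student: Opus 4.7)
The plan is to combine Shkredov's higher-energy variant of the Balog--Szemer\'edi--Gowers theorem (Lemma \ref{shkbs}) with the many-fold sumset estimate \eqref{sio2} for subsets of $S_{4,m}$. The hypothesis $r_{2}(X, \vec{n}) < N^{2/3+\delta}$ controls the sixth moment $E_{2,3}(X)$, which is precisely the input needed to bound the parameter $M$ in Shkredov's theorem, and the bound \eqref{sio2} on $|2X'-X'|$ for spherical sets $X'$ will then conflict with the small-doubling conclusion of Shkredov unless $E_{2,2}(X)$ is already small.

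Assuming $|X|$ is comparable to $N$ (the case $|X|$ much smaller than $N$ is handled by the threshold bound $E_{2,2}(X) \ll_{\epsilon} m^{\epsilon} |X|^{7/3}$ from Lemma \ref{floma}), I would first bound $E_{2,3}(X)$ by combining the incidence-geometric estimate of Lemma \ref{floma} with the hypothesis:
\[
E_{2,3}(X) \ll_{\epsilon} m^{\epsilon} \bigl(|X|^{8/3} + |X|\sup_{\vec{n}} r_{2}(X, \vec{n})^{2}\bigr) \ll m^{\epsilon} N^{8/3},
\]
since $2\delta < 1/3$ and $|X| \leq N$. Parametrising $E_{2,2}(X) = N^{7/3 - \tau}$ and writing $K = |X|^{3}/E_{2,2}(X)$ and $M = E_{2,3}(X)|X|^{2}/E_{2,2}(X)^{2}$ as in Lemma \ref{shkbs}, this gives $K \leq N^{2/3 + \tau}$ and $M \ll m^{\epsilon} N^{2\tau}$, so $M$ is a small power of $N$ whenever $\tau$ is small.

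Next, I would invoke Lemma \ref{shkbs} to extract a subset $X' \subseteq X$ with $|X'| \gg M^{-10}(\log M)^{-15}|X|$ and $|2X' - X'| \ll M^{162}(\log M)^{252} K|X'|$. Since $X' \subseteq S_{4,m}$, the many-fold sumset estimate \eqref{sio2} applies to $X'$, yielding the complementary lower bound $|2X' - X'| \gg m^{-\epsilon}|X'|^{43/24}$. Dividing through by $|X'|$ and comparing the two bounds gives
\[
|X'|^{19/24} \ll_{\epsilon} m^{\epsilon} M^{162} K,
\]
into which the lower bound for $|X'|$ and the upper bounds for $M$ and $K$ in terms of $\tau$ may be substituted, producing a linear inequality in $\tau$ whose solution forces $\tau \geq \delta$. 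This is exactly the sought conclusion $E_{2,2}(X) \ll_{\epsilon} m^{C\epsilon} N^{7/3 - \delta}$.

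The main obstacle is the final numerical balancing of exponents. The sharp constant $\delta = 1/1392$ arises from weighing the large exponents $10$ and $162$ coming from Shkredov's theorem against the small gain $19/24$ from \eqref{sio2}, and is sensitive to any loss in either the $5/24$ saving of \eqref{sio2} (which itself comes from the threshold bound $E_{3,2}(X) \ll m^{\epsilon} |X|^{101/24}$ of Lemma \ref{zee11}, ultimately relying on Lemma \ref{hpitw} for three distinct translates of $S_{4,m}$) or in the $E_{2,3}$ bound above. Care is also needed to bookkeep the $m^{\epsilon}$ and polylogarithmic factors (which combine into the single $m^{C\epsilon}$ factor in the statement) and to verify that the boundary case in which $|X|$ is moderately smaller than $N$ is also covered by the threshold estimate, closing every regime of $|X|$.
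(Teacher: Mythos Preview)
Your proposal is correct and follows essentially the same route as the paper: bound $E_{2,3}(X)$ using Lemma~\ref{floma} together with the hypothesis $r_2(X,\vec{n}) < N^{2/3+\delta}$, apply Shkredov's Lemma~\ref{shkbs}, and then play the resulting small-doubling conclusion for $X'$ against the sumset lower bound \eqref{sio2}. The only cosmetic differences are that the paper argues by contradiction (assuming $E_{2,2}(X)\geq N^{7/3-\delta}$) rather than via your parameter~$\tau$, and disposes of the small-$|X|$ regime via the Cauchy--Schwarz step $E_{2,2}(X)\leq |X|\,E_{2,3}(X)^{1/2}$ when $|X|\leq N^{4/5+6\delta/5}$ rather than by invoking the threshold estimate directly.
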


\begin{proof}
We assume that $E_{2,2}(X) \geq  N^{2 + 1/3 - \delta}$, since otherwise we are done. Moreover, Lemma $\ref{floma}$ implies that
\[ E_{2,3}(X) \ll_{\epsilon} m^{\epsilon}(|X|^{2 + 2/3}+ |X|\sup_{\vec{n}} r_{2}(X,\vec{n})^2 ) , \]
and thus, noting $\eqref{prop1x}$, we get
\begin{equation} \label{wfhpn}
E_{2,3}(X) \ll_{\epsilon} m^{\epsilon} (|X|^{2+2/3} + |X| N^{4/3 + 2 \delta}). 
\end{equation}
If the second term on the right hand side dominates, that is, when $|X| \leq N^{4/5 + 6 \delta /5}$, then
\[ E_{2,3}(X) \ll_{\epsilon} m^{\epsilon} |X| N^{4/3 + 2 \delta}, \]
which, in turn, combines with an application of the Cauchy-Schwarz inequality to give us
\[ E_{2,2}(X) \leq |X| E_{2,3}(X)^{1/2} \ll_{\epsilon} m^{\epsilon} |X|^{3/2} N^{2/3 + \delta}  \ll_{\epsilon} m^{\epsilon} N^{2 + 1/6 + \delta} .\]
Moreover, since $\delta < 1/12$, we are done in this case.
\par

Henceforth we may assume that the first term on the right hand in $\eqref{wfhpn}$ dominates, in which case, we get
\begin{equation} \label{trh2}
 E_{2,3}(X) \ll_{\epsilon} m^{\epsilon} |X|^{2+2/3} . 
 \end{equation}
We now apply Lemma $\ref{shkbs}$ for the set $X$. Thus, we see that
\[ |X|^3/K = E_{2,2}(X) \geq N^{2+ 1/3 - \delta}, \]
whence, 
\begin{equation} \label{upbdk4}
 K \leq |X|^{3} N^{-2 -1/3} N^{\delta}.
 \end{equation}
Similarly, we have
\[ M|X|^4/K^2= E_{2,3}(X)  \ll_{\epsilon} m^{\epsilon} |X|^{2 + 2/3}, \]
which gives us
\begin{equation} \label{upbdm4}
 M \ll_{\epsilon} m^{\epsilon} |X|^{-1-1/3} K^2 \ll_{\epsilon} m^{\epsilon} |X|^{4+2/3} N^{-4-2/3} N^{2 \delta} \leq m^{\epsilon} N^{2\delta}.
 \end{equation}
\par

The conclusion of Lemma $\ref{shkbs}$ implies that there must exist $X' \subseteq X$ such that
\[ |X'| \gg M^{-10} (\log M)^{-15}|X| \ \text{and} \ |2X'-X'| \ll M^{162} (\log M)^{252}   K |X'|. \]
As before, we may use $\eqref{sio2}$ to discern that
\[|2X'-X'| \gg_{\epsilon} m^{-\epsilon} |X'|^{1 + \frac{19}{24}},\]
which, in conjunction with the preceding inequality, gives us
\[ M^{-10} (\log M)^{-15} |X|^{19/24} |X'|  \ll |X'|^{1+19/24} \ll_{\epsilon} m^{\epsilon} M^{162} (\log M)^{252}    K|X'|  .\]
Simplifying the above, we see that
\[ |X|^{19/24} \ll_{\epsilon} m^{\epsilon} M^{172} (\log M)^{267} K .  \]
We now insert our upper bounds $\eqref{upbdk4}$ and $\eqref{upbdm4}$ for $K$ and $M$ respectively in the above inequality to get
\[ |X|^{19/24} \ll_{\epsilon} m^{173\epsilon} N^{344 \delta} (\log M)^{267} |X|^{3} N^{-2-1/3}N^{\delta}.\]
This, combined with the fact that 
\[ (\log M)^{267} \ll_{\epsilon} (\log m \log N)^{267} \ll_{\epsilon} m^{\epsilon},\]
 delivers the bound
\[ N^{1/8-345 \delta}\ll_{\epsilon} m^{174 \epsilon}   . \]
Now, since $345\delta =345/2766 < 1/8 $, we discern that $N \ll_{\epsilon} m^{C\epsilon}$, for some constant $C>0$, but this yields the bound
\[ E_{2,2}(X) \leq N^3 \ll_{\epsilon} m^{3C\epsilon}, \]
and so, we get the desired conclusion anyway.
\end{proof}

Note that Lemma $\ref{haptg}$ implies the first inequality in $\eqref{dvmn2}$ after we rescale $\epsilon$ appropriately, and so, we are done with the proof of Theorem $\ref{bt22}$.

%---------------------------------------------------------------------------------------------------------------------------
%---------------------------------------------------------------------------------------------------------------------------
%---------------------------------------------------------------------------------------------------------------------------
%---------------------------------------------------------------------------------------------------------------------------
%---------------------------------------------------------------------------------------------------------------------------
%---------------------------------------------------------------------------------------------------------------------------

\section{Additive energies on $\mathcal{S}_{3,\lambda}$ and point--sphere incidences}

We end this paper by studying $E_{s,2}(A)$ when $A$ is some arbitrary subset of $\mathcal{S}_{3,\lambda}$, for some fixed $\lambda >0$. Since the additive equation $\eqref{abad}$ is invariant under affine transformations, we may assume that $\lambda = 1$ after a suitable dilation. In this case, we have two regimes of results, as in the setting of the parabola. The first collection of results provide upper bounds for $E_{2,2}(A)$ in terms of $|A|$ and $\delta_A$, where 
\[ \delta_{A} = \inf_{\substack{   \vec{a}_1, \vec{a}_2 \in A  \\ \vec{a}_1 \neq \vec{a}_2}  } | \vec{a}_1 - \vec{a}_2|. \]
Here, the decoupling results of Bourgain and Demeter \cite{BD2015} imply that
%\[ E_{s,2}(A) \ll_{\epsilon} \delta_{A}^{- s + 2- \epsilon} |A|^{s}
\begin{equation} \label{hfli}
 E_{2,2}(A) \ll_{\epsilon} \delta_{A}^{-\epsilon} |A|^{2}, 
 \end{equation}
which can then be extended to obtain the estimate
\[ E_{s,2}(A) \ll_{\epsilon} \delta_{A}^{-\epsilon} |A|^{2s-2}, \]
whenever $s \geq 2$. Moreover, $\eqref{hfli}$ is sharp so long as $\delta_A^{-1} \ll |A|^{O(1)}$, but when $\delta_A^{-1}$ is large in terms of $|A|$, say, $\delta_A^{-1} \gg 2^{2^{|A|}}$, these bounds become weaker than the trivial estimate $E_{2,2}(A) \leq |A|^3$. 
\par

In the latter situation, the second regime of results becomes more efficient, which consists of bounds for $E_{s,2}(A)$ that are independent of the spacing $\delta_{A}$. For instance, Bourgain and Demeter \cite{BD2015} showed that
\begin{equation} \label{bou11}
 E_{2,2}(A) \ll |A|^{2 + 1/3}, 
 \end{equation}
for any finite, arbitrary subset $A$ of $\mathcal{S}_{3,1}$. We recall that such additive energies are very closely related to restriction estimates. Furthermore, since the restriction theory for the paraboloid and the sphere are very similar, and since it is possible to utilise incidence geometric methods to achieve the estimate
\[ E_{2,2}(A) \ll_{\epsilon} |A|^{2 + \epsilon}, \]
for each finite subset $A$ of the truncated paraboloid (see \cite[Theorem $2.8$]{BD2015}), Bourgain and Demeter \cite{BD2015} conjectured the analogous upper bound for additive energies on the $2$-sphere.
 
\begin{Conjecture} \label{beso}
Let $A$ be a finite, non-empty subset of $\mathcal{S}_{3,1}$. Then 
\[ E_{2,2}(A) \ll_{\epsilon} |A|^{2 + \epsilon}. \]
\end{Conjecture}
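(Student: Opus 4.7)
The plan is to exploit the fact that for any $\vec{v} \in A+A$, any representation $\vec{v} = a_1 + a_2$ with $a_1, a_2 \in A \subseteq \mathcal{S}_{3,1}$ forces both $a_1$ and $a_2$ to lie on the circle $C_{\vec{v}} = \mathcal{S}_{3,1} \cap \Pi_{\vec{v}}$, where $\Pi_{\vec{v}}$ is the affine plane $x \cdot \vec{v} = |\vec{v}|^2/2$. Hence $r_2(\vec{v}) \leq |A \cap C_{\vec{v}}|$ and
\[ E_{2,2}(A) \leq \sum_{\vec{v} \in A + A} |A \cap C_{\vec{v}}|^2, \]
so the conjecture would follow from the dyadic incidence estimate $|\{\vec{v} : |A \cap C_{\vec{v}}| \geq \tau\}| \ll_\epsilon |A|^{2+\epsilon} \tau^{-2}$ for all $\tau \geq 1$.

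The first step is to observe that the incidence graph of $(A, \{C_{\vec{v}}\})$ is $K_{3,2}$-free, since any three points of $A$ (necessarily non-collinear, as a line meets $\mathcal{S}_{3,1}$ in at most two points) determine a unique plane and therefore a unique circle of the form $C_{\vec{v}}$. After stereographic projection this becomes a $K_{3,2}$-free point-circle incidence problem in $\mathbb{R}^2$. Current incidence technology through Lemma \ref{st1} yields $I(P,\mathcal{C}) \ll |P|^{3/5+\epsilon}|\mathcal{C}|^{4/5} + |P| + |\mathcal{C}|$, giving the Bourgain--Demeter bound $|A|^{2+1/3}$ and its refinement $|A|^{2+2/9+\epsilon}$ from Theorem \ref{sdlyf}. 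The target would be a Szemer\'edi--Trotter-strength bound $I \ll |P|^{2/3+\epsilon}|\mathcal{C}|^{2/3}$, which would immediately close the problem; the most promising route would be a polynomial partitioning argument exploiting the strong algebraic constraint that all circles in the family are cut out of a common quadric.

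A complementary, additive-combinatorial route is to assume $E_{2,2}(A) \gg |A|^{2+\delta}$ and derive a contradiction. An application of Lemma \ref{shkbs}, fed by an upper bound on $E_{2,3}(A)$ coming from the same point-circle machinery, would extract $A' \subseteq A$ with $|A'| \gg |A|^{1 - O(\delta)}$ and $|2A' - A'| \ll |A'|^{1+O(\delta)}$, so that $A'$ has very small doubling on $\mathcal{S}_{3,1}$. A sharp Freiman-type structure theorem on a positively curved surface -- forcing $A'$ to concentrate essentially on a great circle -- would, combined with the analogous and known one-dimensional energy bound, complete the argument by contradiction.

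The chief obstacle for both approaches is the same: neither a Szemer\'edi--Trotter bound for the specific family of circles cut on a fixed sphere, nor a sharp Freiman-on-spheres theorem, is presently available, and either would itself be a significant advance in the state of the art. This is consistent with the equivalence of Conjecture \ref{beso} to the $\ell^4$ discrete restriction conjecture for the two-sphere, which stands among the most stubborn open problems in Euclidean harmonic analysis, so a genuinely new analytic or algebraic input appears to be required for either route to succeed.
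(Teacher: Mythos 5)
You have correctly recognized that Conjecture~\ref{beso} is an open conjecture, not a theorem, and the paper offers no proof of it: it is attributed to Bourgain and Demeter and is equivalent (at the sharp exponent) to the $\ell^4$ discrete restriction conjecture for $\mathcal{S}_{3,1}$. Your write-up is not a proof but a well-calibrated survey of two natural attack routes (a Szemer\'edi--Trotter-strength point--circle incidence bound for circles cut from a fixed sphere, and a Balog--Szemer\'edi--Gowers/Freiman-on-spheres argument via Lemma~\ref{shkbs}), together with an accurate diagnosis of the missing ingredient in each. This matches the paper's own treatment: the paper only records the conjecture and proves the weaker threshold-and-slightly-below bounds $E_{2,2}(A)\ll |A|^{2+2/9+\epsilon}$ and $E_{2,2}(A)\ll |A|^{2+1/3-1/1030}$ in \S8 via precisely the point--circle incidence input (Lemma~\ref{aksi}) and the higher-energy machinery you describe. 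Two small technical remarks on your sketch: a $K_{3,2}$-free point--circle bound in the plane from Lemma~\ref{st1} gives $I\ll |P|^{4/5+\epsilon}|\mathcal{C}|^{3/5}+|P|+|\mathcal{C}|$, not the exponents you wrote (you have $a$ and $b$ transposed), and the paper actually gets its improvement to $|A|^{2+2/9}$ from the stronger unconditional bound of Aronov--Koltun--Sharir (Lemma~\ref{aksi}) rather than from the Fox--Pach--Sheffer--Suk--Zahl incidence theorem; neither changes your conclusion that the Szemer\'edi--Trotter exponent $2/3$ remains out of reach. So the assessment that no proof is currently available, and that either a new incidence bound or a new structural theorem would be required, is correct and consistent with the paper.
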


As in the previous sections, we can use point--sphere incidences to study this problem, and so, we begin by recording such a result by Zahl \cite[Theorem $1.2$]{Za2013}.

\begin{lemma} \label{zain}
Let $P$ be a finite set of points in $\mathbb{R}^3$ and let $L$ be a finite set of spheres in $\mathbb{R}^3$ such that no three spheres intersect in a common circle. Then
\[ \sum_{p \in P} \sum_{l \in L} \mathds{1}_{p \in l} \ll |P|^{3/4} |L|^{3/4} + |P| + |L|. \]
\end{lemma}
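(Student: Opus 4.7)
The plan is to lift the problem to point--hyperplane incidences in $\mathbb{R}^4$ and then apply polynomial partitioning there. Consider the paraboloid lift $\Phi : \mathbb{R}^3 \to \mathbb{R}^4$ given by $\Phi(x,y,z) = (x,y,z, x^2+y^2+z^2)$. A sphere
\[
S : (x-a)^2 + (y-b)^2 + (z-c)^2 = r^2
\]
corresponds to the affine hyperplane
\[
\Pi_S : w = 2ax + 2by + 2cz + (r^2 - a^2 - b^2 - c^2),
\]
and $p \in S$ if and only if $\Phi(p) \in \Pi_S$. This yields a bijection between incidences in $(P,L)$ and incidences between the lifted point set $\Phi(P) \subset \mathbb{R}^4$ and the hyperplane family $H = \{\Pi_l : l \in L\}$. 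Since two spheres intersect in the affine circle corresponding to the intersection of their lifted hyperplanes (a $2$-flat), three spheres share a common circle iff three of the corresponding hyperplanes share a common $2$-flat. Thus the hypothesis of the lemma translates into: no $2$-flat in $\mathbb{R}^4$ is contained in three of the hyperplanes in $H$.

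The key consequence of this translated hypothesis is that the incidence graph between $\Phi(P)$ and $H$ is $K_{3,3}$-free away from incidences supported on lines. Indeed, if three affinely independent points of $\Phi(P)$ are all incident to three distinct hyperplanes of $H$, then each of the three hyperplanes contains the affine $2$-flat spanned by the three points, contradicting the hypothesis. The only way to have a $K_{3,3}$ is therefore for the three points to be collinear, which is a degenerate structure that will be controlled separately and absorbed into the additive terms $|P| + |L|$.

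With the $K_{3,3}$-free structure in hand, I would carry out a Guth--Katz polynomial partitioning argument in $\mathbb{R}^4$: choose a partitioning polynomial $Q$ of degree $D$ whose zero set divides $\mathbb{R}^4 \setminus Z(Q)$ into $O(D^4)$ cells, each meeting $O(|\Phi(P)|/D^4)$ points. Each hyperplane is either contained in $Z(Q)$ or crosses $O(D)$ cells. Inside each cell, apply the K\H{o}v\'ari--S\'os--Tur\'an inequality with the $K_{3,3}$-free bound to get $\ll n_\Delta^{2/3} m_\Delta^{2/3} + n_\Delta + m_\Delta$ incidences, where $n_\Delta, m_\Delta$ are the points and hyperplanes meeting cell $\Delta$. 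Summing over cells and choosing $D \sim (|P|/|L|)^{1/4}$ produces the desired main term $|P|^{3/4}|L|^{3/4}$. The main obstacle is handling incidences supported on $Z(Q)$ and on rich lines simultaneously without double-counting the degenerate $K_{3,3}$ configurations. For hyperplanes contained in $Z(Q)$ one reduces to an induction on dimension within the $3$-dimensional variety $Z(Q)$; for rich lines (which on $\Phi(P)$ arise only as lifts of special planar conics in $\mathbb{R}^3$), one first extracts their contribution, which is linear in $|P|$ and $|L|$, and then applies the cellular argument to the residual configuration, where the $K_{3,3}$-free property genuinely holds. Careful bookkeeping of these boundary terms is where the non-triviality of the lemma resides.
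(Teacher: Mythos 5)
The paper does not prove this lemma; it is quoted verbatim as \cite[Theorem~1.2]{Za2013}, so you are attempting to reprove a black-box result. Your reduction itself is clean and even a bit cleaner than you give it credit for: since $\Phi(P)$ lies on the strictly convex paraboloid in $\mathbb{R}^4$, no three of the lifted points are collinear, so the incidence graph of $\Phi(P)\times H$ is genuinely $K_{3,3}$-free with no degenerate ``collinear'' case to excise; there is no need for the separate bookkeeping of rich lines you worry about.

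The real gap is quantitative. A hyperplane in $\mathbb{R}^4$ is $3$-dimensional, so a degree-$D$ partitioning polynomial cuts it into $O(D^3)$ pieces, not $O(D)$ as you assert; you appear to have imported the line-in-$\mathbb{R}^2$ count from the Szemer\'edi--Trotter template. Redoing the cell computation correctly, with $n=|P|$, $m=|L|$, $O(D^4)$ cells each holding $O(n/D^4)$ points and $\sum_\Delta m_\Delta \ll mD^3$, the K\H{o}v\'ari--S\'os--Tur\'an step yields
\[
\sum_\Delta I_\Delta \;\ll\; n\,m^{2/3}D^{-2/3} + mD^3,
\]
and optimizing in $D$ gives $I\ll n^{9/11}m^{8/11}+n+m$. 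This is exactly the bound the paper's Lemma~\ref{st1} (Fox--Pach--Sheffer--Suk--Zahl) already produces for $K_{3,t}$-free point--hyperplane incidences in $\mathbb{R}^4$, and it is strictly weaker than the target $n^{3/4}m^{3/4}$: the total exponent $9/11+8/11=17/11$ exceeds $3/2$, so even at $n=m$ your route loses a power. In other words, lifting to $\mathbb{R}^4$ and running a single-level polynomial partition cannot recover Zahl's exponent; the lift discards the crucial fact that the configuration lives in three dimensions. Zahl's actual argument works directly in $\mathbb{R}^3$ with a more elaborate (two-stage) partitioning scheme that exploits the low ambient dimension, and that is where the improvement from $17/11$ down to $3/2$ comes from. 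Your choice $D\sim(n/m)^{1/4}$ also does not balance the two terms in either the correct or your stated cell count, so the claim that it ``produces the desired main term'' does not check out numerically.
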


For our purposes, we will set $L = \{ \vec{x} + \mathcal{S}_{3,1} \ | \ \vec{x} \in X\}$ for some finite, non-empty set $X$. With this specific description of $L$ in hand, we can see that any three distinct spheres from $L$ can intersect in at most $O(1)$ elements in $\mathbb{R}^3$, and so, the hypothesis of Lemma $\ref{zain}$ is satisfied. As in the previous sections, we can then combine the conclusion of Lemma $\ref{zain}$ along with Lemma $\ref{6on}$ to attain a weighted incidence bound, which we can subsequently utilise to obtain upper bounds for $E_{s,2}(A)$ in terms of $E_{s-1,2}(A)$, whenever $A$ is a finite, non-empty subset of $\mathcal{S}_{3,1}$. In particular, we will get an inequality of the shape
\begin{equation} \label{ste2}
 E_{s,2}(A) \ll E_{s-1,2}(A)^{1/3} |A|^{(4s-2)/3} + |A|^{2s-2}, 
 \end{equation}
whenever $s \geq 2$. Setting $s=2$ in the above expression recovers the bound $\eqref{bou11}$, that is, the aforementioned result of Bourgain--Demeter. We further remark that upon adapting our argument from \S7 in this situation, we would be able to improve upon $\eqref{bou11}$ and obtain the estimate
\[ E_{2,2}(A) \ll |A|^{2 + 1/3  - 1/1030}, \]
for every finite, non-empty subset $A$ of $\mathcal{S}_{3,1}$. This can then be amalgamated with $\eqref{ste2}$ to obtain the threshold breaking bounds
%\begin{equation} \label{mudm}
\[ E_{s,2}(A) \ll |A|^{2s -2 + (1- 3/1030) \cdot 3^{-s+1}}, \]
% \end{equation}
whenever $s \geq 2$. 
\par

It was noted by Sheffer in \cite{Shef} that one should be able to obtain stronger bounds than $\eqref{bou11}$ by utilising point--circle incidences in $\mathbb{R}^3$. In order to see this, we first record the precise point--circle incidence result which we intend to employ (see \cite{AKS2005}). 

\begin{lemma} \label{aksi}
Let $P$ be a finite set of points in $\mathbb{R}^3$ and let $L$ be a finite collection of distinct circles in $\mathbb{R}^3$. Then 
\[ \sum_{p \in P} \sum_{l \in L} \mathds{1}_{p \in l} \ll_{\epsilon} |P|^{\epsilon} (|P|^{6/11} |L|^{9/11} + |P|^{2/3} |L|^{2/3} + |P| + |L|). \]
\end{lemma}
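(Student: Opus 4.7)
The plan is to prove this point--circle incidence bound in $\mathbb{R}^3$ via a Guth--Katz style polynomial partitioning argument, exploiting the combinatorial fact that three non-collinear points in $\mathbb{R}^3$ determine a unique circle; this makes the incidence graph of $P \times L$ essentially $K_{3,2}$-free, with the contribution of collinear triples absorbed into the trivial $O(|P|+|L|)$ error term. I would also perform a preliminary dyadic decomposition $P = \bigsqcup_\tau P_\tau$ according to the richness $r_L(p) = |\{l \in L : p \in l\}|$, so that it suffices to bound $|P_\tau| \tau$ for each dyadic value of $\tau$, and similarly on the $L$ side.

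Next I would apply polynomial partitioning: for a parameter $D \geq 1$ to be optimised, choose $f \in \mathbb{R}[x_1, x_2, x_3]$ of degree at most $D$ whose zero set $Z(f)$ splits $\mathbb{R}^3 \setminus Z(f)$ into $O(D^3)$ open cells, each containing $O(|P|/D^3)$ points of $P$. By B\'ezout, any circle in $L$ either lies in $Z(f)$ or meets $Z(f)$ in at most $2D$ points, and hence enters at most $2D+1$ cells. I would split $L = L_0 \sqcup L_1$ depending on whether a given circle lies in $Z(f)$.

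For the in-cell incidences between $P \setminus Z(f)$ and $L_1$, I would apply Lemma \ref{st1} in each cell $\Omega$ with $d = 3$, $s = 3$, $t = 2$, $k = 2$, yielding a per-cell bound of the form $|P \cap \Omega|^{3/4 + \epsilon} |L_1 \cap \Omega|^{3/4} + |P \cap \Omega| + |L_1 \cap \Omega|$; summing over the $O(D^3)$ cells via H\"older's inequality, and using that each circle in $L_1$ enters $O(D)$ cells, delivers the main cellular contribution as an expression in $D$, $|P|$ and $|L|$. Incidences between $L_1$ and $P \cap Z(f)$ contribute at most $O(D|L|)$, since each such circle meets $Z(f)$ in at most $2D$ points.

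The main obstacle is handling incidences on $Z(f)$ itself, namely $I(P \cap Z(f), L_0)$. Here the key structural input is the classical Coolidge--Kummer result that the only irreducible real algebraic surfaces in $\mathbb{R}^3$ containing infinitely many circles are planes and spheres. Consequently the circles of $L_0$ concentrate on the $O(D)$ planar or spherical irreducible components of $Z(f)$, up to an additional $O(D^3)$ isolated circles which can be absorbed into the $O(|L|)$ trivial term. On each such sphere or plane, stereographic projection reduces the problem to a two-dimensional point--circle incidence question, to which the $K_{3,2}$-free version of Lemma \ref{st1} with $d = 2$ applies, giving a planar bound of the shape $|P|^{3/5 + \epsilon} |L|^{4/5} + |P| + |L|$. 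Finally one would optimise $D$ to balance these three types of contributions; the threshold $D \sim |P|^{a} |L|^{-b}$ for appropriate $a,b$ produces the exponents $(6/11, 9/11)$ in the dense regime $|L|^{5/4} \ls |P|$ and the exponents $(2/3, 2/3)$ in the sparser regime, together with the trivial $O(|P|+|L|)$ tail.
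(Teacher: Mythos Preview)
The paper does not prove Lemma~\ref{aksi} at all; it simply quotes the result from Aronov--Koltun--Sharir \cite{AKS2005} as a black box, so there is no ``paper's proof'' to compare your argument against. Your polynomial partitioning approach is a reasonable modern alternative route to such a bound, and indeed arguments of this general shape appear in later literature on point--circle incidences.

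That said, your sketch contains a genuine gap at the on-surface step. The claimed ``Coolidge--Kummer'' input, that the only irreducible real algebraic surfaces in $\mathbb{R}^3$ containing infinitely many circles are planes and spheres, is false. Every smooth quadric (ellipsoids, hyperboloids, cones, cylinders) carries at least one $1$-parameter family of circles, and tori and more general Darboux cyclides carry several; so the circles $L_0 \subseteq Z(f)$ need not concentrate on planar or spherical components. Consequently your reduction of $I(P \cap Z(f), L_0)$ to a finite union of planar point--circle problems, via stereographic projection from $O(D)$ spheres and planes, does not go through as stated. The approach can be repaired, but one must either invoke a quantitative statement bounding the number of circles on an irreducible surface of degree $d_i$ that is not ruled by circles (and then handle the circle-ruled components by projecting each such surface to $\mathbb{R}^2$, where the images are still bounded-degree plane curves and Lemma~\ref{st1} in $d=2$ applies), or perform a second level of polynomial partitioning inside $Z(f)$. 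Either fix is standard in the polynomial-method literature, but neither is the one-line structural fact you invoked.
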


As before, we can then combine this with Lemma $\ref{6on}$ to attain a weighted incidence bound between points and circles in $\mathbb{R}^3$.

\begin{lemma} \label{wtaks}
Let $P$ be a finite set of points in $\mathbb{R}^3$ and let $L$ be a finite collection of distinct circles in $\mathbb{R}^3$ and let $w: L \to \mathbb{N}$ be a weight function. Then 
\[ \sum_{p \in P} \sum_{l \in L} \mathds{1}_{p \in l} \ll_{\epsilon} |P|^{\epsilon}(|P|^{6/11} \n{w}_2^{4/11}\n{w}_1^{7/11} + |P|^{2/3} \n{w}_2^{2/3} \n{w}_1^{1/3} + \n{w}_1 +  |P|\n{w}_{\infty}). \]
\end{lemma}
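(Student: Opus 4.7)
The plan is to deduce Lemma $\ref{wtaks}$ from Lemma $\ref{aksi}$ by a dyadic decomposition on $L$ according to the weight $w$, following the strategy used in the proof of Lemma $\ref{6on}$; the only new ingredient relative to that lemma is the need to handle the extra middle term $|P|^{2/3}|L|^{2/3}$ appearing in the point--circle incidence bound. Let $K$ be the largest integer with $2^{K}\leq \n{w}_{\infty}$, and for each $0\leq k\leq K$ set $L_k=\{l\in L:2^k\leq w(l)<2^{k+1}\}$, so that
\[ \sum_{p\in P}\sum_{l\in L}w(l)\mathds{1}_{p\in l}\ \ll\ \sum_{k=0}^{K}2^{k}\,I(P,L_k). \]
Applying Lemma $\ref{aksi}$ to each pair $(P,L_k)$ produces four sums to control, one for each term in its right-hand side.

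The central computation is the standard estimate
\[ \sum_{k=0}^{K}2^{k}|L_k|^{b}\ \ll\ \n{w}_{1}^{2b-1}\n{w}_{2}^{2-2b},\qquad b\in(1/2,1), \]
which I would prove exactly as in Lemma $\ref{6on}$: set $X=\n{w}_{2}^{2}/\n{w}_{1}$ and split the index set at the threshold $2^{k}\leq X$ versus $2^{k}>X$. On the low range, combining $2^{k}|L_k|\leq\n{w}_{1}$ with the geometric summation $\sum_{2^{k}\leq X}2^{k(1-b)}\ll X^{1-b}$ produces the bound; on the high range, $2^{2k}|L_k|\leq\n{w}_{2}^{2}$ together with $\sum_{2^{k}>X}2^{-k(2b-1)}\ll X^{-(2b-1)}$ produces the same bound. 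Specializing to $b=9/11$ yields the factor $\n{w}_{1}^{7/11}\n{w}_{2}^{4/11}$, and to $b=2/3$ yields $\n{w}_{1}^{1/3}\n{w}_{2}^{2/3}$, which, after multiplying through by $|P|^{6/11}$ and $|P|^{2/3}$ respectively, match the first two terms in the stated conclusion.

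The remaining contributions are immediate: $\sum_{k}2^{k}|L_k|=\n{w}_{1}$ up to a constant produces the $\n{w}_{1}$ term, while $\sum_{k}2^{k}\ll 2^{K}\ll \n{w}_{\infty}$ multiplied against the $|P|$ factor produces $|P|\n{w}_{\infty}$. The only point requiring care is the $|P|^{\epsilon}$ factor from Lemma $\ref{aksi}$: summing over $K+1$ dyadic scales introduces an $O(\log\n{w}_{\infty})$ multiplier, but since only non-empty buckets contribute, this loss is bounded by $\log|L|$ and is absorbed into $|P|^{\epsilon'}$ after a slight enlargement of $\epsilon$. I do not anticipate a genuine obstacle beyond careful bookkeeping, as the whole argument runs parallel to the proof of Lemma $\ref{6on}$ with just one additional exponent $b$ to handle.
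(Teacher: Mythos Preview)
Your approach is correct and is precisely what the paper does: it states that Lemma~\ref{wtaks} follows by combining Lemma~\ref{aksi} with the dyadic decomposition argument of Lemma~\ref{6on}, and your write-up carries this out explicitly (decomposing only $L$, since $P$ carries trivial weight, and handling the extra exponent $b=2/3$ in the same way as $b=9/11$).

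One small correction to your last paragraph: no logarithmic loss actually appears, so the absorption step you describe is unnecessary (and would in fact be unjustified, since nothing forces $\log|L|$ or $\log\n{w}_\infty$ to be $\ll |P|^{\epsilon}$). The $|P|^{\epsilon}$ factor from Lemma~\ref{aksi} is independent of $k$ and simply factors out of the sum. The four remaining dyadic sums are each bounded directly: the two main terms $\sum_k 2^k|L_k|^{b}$ with $b=9/11,\,2/3$ converge geometrically by your threshold argument, $\sum_k 2^k|L_k|\ll\n{w}_1$ exactly, and $\sum_{k=0}^{K}2^k\ll 2^{K}\ll\n{w}_\infty$ as a finite geometric series. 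So the bound holds with no extra logarithm at all.
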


With the relevant incidence result in hand, we begin our analysis of $E_{2,2}(A)$. Thus, we write
\[ E_{2,2}(A) = \sum_{\vec{a}_1, \dots, \vec{a}_4 \in A} \mathds{1}_{\vec{a}_1 + \vec{a}_2 - \vec{a}_3 = \vec{a}_4} = \sum_{\vec{n} \in 2A} \sum_{\vec{a}, \vec{b} \in A} r_{2}(\vec{n}) \mathds{1}_{\vec{n} - \vec{a} = \vec{b}}. \]
We may assume that $\vec{n} \neq 0$, since the case when $\vec{n} = 0$ contributes at most $O(|A|^2)$ solutions to $E_{2,2}(A)$. Moreover, note that for a fixed $\vec{n} \in 2A \setminus \{0\}$, the expression $\vec{n} - \vec{a} = \vec{b}$ implies that $\vec{b} \in A \cap (\vec{n}- A) \subseteq D_{\vec{n}}$, where $D_{\vec{n}}$ is the unique circle described by the set $\mathcal{S}_{3,1} \cap (\vec{n} - \mathcal{S}_{3,1})$. Furthermore, since fixing the values of $\vec{b}$ and $\vec{n}$ also fixes $\vec{a}$, we deduce that
\[ \sum_{\vec{a}, \vec{b} \in A} \mathds{1}_{\vec{n} - \vec{a} = \vec{b}} = \sum_{\vec{b} \in A} \mathds{1}_{\vec{b} \in A \cap (\vec{n}- A)} \leq \sum_{\vec{b} \in A} \mathds{1}_{\vec{b} \in D_{\vec{n}}}, \]
whence,
\[ E_{2,2}(A) \leq \sum_{\vec{n} \in 2A \setminus \{0\}} \sum_{\vec{b} \in A} r_{2}(\vec{n}) \mathds{1}_{\vec{b} \in D_{\vec{n}}} + O(|A|^2). \]
\par

We can estimate the sum on the right hand side above using Lemma $\ref{wtaks}$, and so, we get
\[ E_{2,2}(A)  \ll_{\epsilon} |A|^{\epsilon}(|A|^{6/11} \n{r_2}_2^{4/11}\n{r_2}_1^{7/11} + |A|^{2/3} \n{r_2}_2^{2/3} \n{r_2}_1^{1/3} + \n{r_2}_1 +  |A|\n{r_2}_{\infty}) + |A|^2. \]
As before, we see that $\n{r_2}_2^2 \leq E_{2,2}(A)$ and $\n{r_2}_1 \leq |A|^2$ and $\n{r_2}_{\infty} \leq |A|,$ and consequently, the preceding inequality gives us
%\[ E_{2,2}(A) \ll_{\epsilon} |A|^{\epsilon} (|A|^{6/11} |A|^{14/11} E_{2,2}(A)^{2/11} +  |A|^{2/3} |A|^{2/3} E_{2,2}(A)^{1/3} + |A|^2). \]
\[ E_{2,2}(A) \ll_{\epsilon} |A|^{\epsilon} (|A|^{20/11} E_{2,2}(A)^{2/11} +  |A|^{4/3} E_{2,2}(A)^{1/3} + |A|^2). \]
Simplifying the above yields the bound
\[ E_{2,2}(A) \ll_{\epsilon} |A|^{2 + 2/9+ \epsilon}, \]
which can subsequently be combined with the inductive estimate $\eqref{ste2}$ to deliver Theorem $\ref{sdlyf}$, and so, we are done.

%---------------------------------------------------------------------------------------------------------------------------
%---------------------------------------------------------------------------------------------------------------------------
%---------------------------------------------------------------------------------------------------------------------------
%---------------------------------------------------------------------------------------------------------------------------
%---------------------------------------------------------------------------------------------------------------------------
%---------------------------------------------------------------------------------------------------------------------------

\bibliographystyle{amsbracket}
\providecommand{\bysame}{\leavevmode\hbox to3em{\hrulefill}\thinspace}

\end{document}